\newtheorem{thm}{Theorem}[section]
\newtheorem{lem}{Lemma}[section]
\theoremstyle{definition}
\theoremstyle{remark}
\newtheorem{rem}{Remark}[section]
\numberwithin{equation}{section}
\newcommand{\R}{\mathbb{R}}
\newcommand{\scrD}{\mathscr{D}}
\newcommand{\supp}{\mathop{\rm supp}}
\newcommand{\Forall}{{\,\forall\,}}
\newcommand{\Exists}{{\,\exists\,}}
\newcommand{\st}{\textrm{~s.t.~}}
\newcommand{\aee}{\textrm{~a.e.~}}
\newcommand{\ass}{\textrm{~a.s.~}}
\newcommand{\dif}[1]{\,\mathrm{d}{#1}} 
\newcommand{\nrm}[2][]{ \| {#2} \|_{#1}} 
\newcommand{\agl}[1][\cdot]{ \langle {#1} \rangle}
\newcommand{\Rk}{{\mathcal{R}_{k}}}
\newcommand{\jz}[1]{{#1}}
\newcommand{\sq}[1]{{#1}}
\newcommand{\hy}[1]{{#1}}
\newcounter{saveeqn}
\title[Determining a random Schr\"odinger equation]{Determining a random Schr\"odinger equation with unknown source and potential}
\author{Jingzhi Li}
\address{Department of Mathematics, Southern University of Science and Technology, Shenzhen, China}
\email{\jz{li.jz@sustech.edu.cn}}
\author{Hongyu Liu}
\address{Department of Mathematics, Hong Kong Baptist University, Kowloon, Hong Kong SAR, China}
\email{hongyu.liuip@gmail.com, hongyuliu@hkbu.edu.hk}
\author{Shiqi Ma}
\address{Department of Mathematics, Hong Kong Baptist University, Kowloon, Hong Kong SAR, China 
	and Department of Mathematics, Southern University of Science and Technology, Shenzhen, China}
\email{\jz{mashiqi01@gmail.com}}
\begin{document}

	\begin{abstract}
		
		\jz{We are concerned with the direct and inverse scattering problems associated with a time-harmonic random Schr\"odinger equation with unknown source and potential terms. 
		The well-posedness of the direct scattering problem is first established. 
		Three uniqueness results are then obtained for the corresponding inverse problems in determining the variance of the source, the potential and the expectation of the source, respectively, by the associated far-field measurements. 
		First, a single realization of the passive scattering measurement can uniquely recover the variance of the source without the a priori knowledge of the other unknowns. 
		Second, if active scattering measurement can be further obtained, a single realization can uniquely recover the potential function without knowing the source. 
		Finally, both the potential and the first two statistic moments of the random source can be uniquely recovered with full measurement data. The major novelty of our study is that on the one hand, both the random source and the potential are unknown, and on the other hand, both passive and active scattering measurements are used for the recovery in different scenarios.}

		\medskip

		\noindent{\bf Keywords:}~~random Schr\"odinger equation, inverse scattering, passive/active measurements, \jz{asymptotic expansion, ergodicity}
		
		{\noindent{\bf 2010 Mathematics Subject Classification:}~~35Q60, 35J05, 31B10, 35R30, 78A40}
		
	\end{abstract}

	\maketitle

	\section{Introduction} \label{sec:Intro-SchroEqu2018}

	In this paper, we are mainly concerned with the following random Schr\"odinger system
	\begin{subequations} \label{eq:1}
	\begin{numcases}{}
	\displaystyle{ (-\Delta-E+V(x)) u(x, E, d, \omega)= f(x)+\sigma(x)\dot{B}_x(\omega), \quad x\in\mathbb{R}^3, } \label{eq:1a} \medskip\\
	\displaystyle{ u(x, E, d, \omega)=\alpha e^{\mathrm{i}\sqrt E x\cdot d}+u^{sc}(x, E, d,\omega), } \label{eq:1b} \medskip\\
	\displaystyle{ \lim_{r\rightarrow\infty} r\left(\frac{\partial u^{sc}}{\partial r}-\mathrm{i}\sqrt{E} u^{sc} \right)=0,\quad r:=|x|, } \label{eq:1c}
	\end{numcases}
	\end{subequations}
	\hy{where $f(x)$ and $\sigma(x)$ in \eqref{eq:1a} are the expectation and standard variance of the source term, $d \in \mathbb{S}^2:=\{ x \in \R^3 \,;\, |x| = 1 \}$ signifies the impinging direction of the incident plane wave, and $E\in\mathbb{R}_+$ is  the energy level. 
	In \eqref{eq:1b}, $\alpha$ takes the value of either 0 or 1 to incur or suppress the presence of the incident wave, respectively. 
	In the sequel, we follow the convention to replace $E$ with $k^2$, namely $k := \sqrt{E} \in \R_+$, which can be understood as the wave number. 
	The limit in \eqref{eq:1c} is the Sommerfeld Radiation Condition (SRC) \cite{colton2012inverse} that characterizes the outgoing nature of the scattered wave field $u^{sc}$. The random system \eqref{eq:1} describes the quantum scattering associated with a potential $V$ and a random active source $(f, \sigma)$ at the energy level $k^2$.}
	
	\jz{In the system \eqref{eq:1}, the random parameter $\omega$ belongs to $\Omega$ with $(\Omega, \mathcal{F}, \mathbb{P})$ signifying a complete probability space.
	The term $\dot B_x(\omega)$ denotes the three-dimensional spatial Gaussian white noise \cite{dudley2002real}. 
	The random part $\sigma(x) \dot B_x(\omega)$ within the source term in \eqref{eq:1a} is an ideal mathematical model for noises arising from real world applications \cite{dudley2002real}.
	We note that the $\sigma^2(x)$ gives the intensity of the randomness of the source at the point $x$, and can be understood as the variance of $\sigma(x) \dot B(x,\omega)$. In what follows, we call $\sigma^2(x)$ the variance function. 
	The statistical information of a single zero-mean Gaussian white noise is encoded in its variance function \cite{ross2014introduction}.
	In this paper, we are mainly concerned with the recovery of the variance and expectation of the random source as well as the potential function in \eqref{eq:1} by the associated scattering measurements as described in what follows.}

	\hy{In order to study the corresponding inverse problems, one needs to have a thorough understanding of the direct scattering problem. In the deterministic case with $\sigma\equiv 0$, the scattering system \eqref{eq:1} is well understood; see, e.g., \cite{colton2012inverse,griffiths2016introduction}. There exists a unique solution $u^{sc}\in H^1_{loc}(\mathbb{R}^3)$, and moreover there holds the following asymptotic expansion as $|x|\rightarrow\infty$, 
	\begin{equation}\label{eq:farfield}
	u^{sc}(x) = \frac{e^{\mathrm{i}k r}}{r} u^\infty(\hat x, k, d) + \mathcal{O} \left( \frac{1}{r^2} \right),
	\end{equation}
	where $\hat x := x/{|x|} \in \mathbb{S}^2$. The term $u^\infty$ is referred to as the far-field pattern, which encodes the information of the potential $V$ and the source $f$. 
	In principle, we shall show that the random scattering system \eqref{eq:1} is also well-posed in a proper sense and possesses a far-field pattern. 
	To that end, throughout the rest of the paper, we assume that $\sigma^2$, $V$, $f$ belong to $L^\infty(\mathbb{R}^3;\R)$, respectively, and that they are compactly supported in a fixed bounded domain $D\subset \mathbb{R}^3$ containing the origin. 
	Under the aforementioned regularity assumption, we establish that the following mapping of the direct problem (\textbf{DP}) is well-posed in a proper sense,
	\begin{equation} \label{eq:dp-SchroEqu2018}
	\textbf{DP \ :} \quad (\sigma, V, f) \rightarrow \{u^{sc}(\hat x, k, d, \omega), u^\infty(\hat x, k, d, \omega) \,;\, \omega \in \Omega,\, \hat x \in \mathbb{S}^2, k > 0,\,  d \in \mathbb{S}^2 \}.
	\end{equation}

  	The well-posedness of the direct scattering problem paves the way for our further study of the inverse problem ({\bf IP}).}
  	\sq{In {\bf IP}, we are concerned with the recoveries of the three unknowns $\sigma^2$, $V$, $f$ in a \emph{sequential} way, by knowledge of the associated far-field pattern measurements $u^\infty(\hat x, k, d, \omega)$. 
  	By sequential, we mean the $\sigma^2$, $V$, $f$ are recovered by the corresponding data sets one-by-one.  	
  	In addition to this, in the recovery procedure, both the \emph{passive} and \emph{active} measurements are utilized. 
  	When $\alpha = 0$, the incident wave is suppressed and the scattering is solely generated by the unknown source. The corresponding far-field pattern is thus referred to as the passive measurement. 
  	In this case, the far-field pattern is independent of the incident direction $d$, and we denote it as $u^\infty(\hat x, k, \omega)$.  	
  	When $\alpha = 1$, the scattering is generated by both the active source and the incident wave, and the far-field pattern is referred to as the active measurement, denoted as $u^\infty(\hat x, k, d, \omega)$.
  	Under these settings, we formulate our {\bf IP} as}
  	\sq{\begin{equation}\label{eq:ip-SchroEqu2018}
  	\textbf{IP \ :}\quad \left\{
  	\begin{aligned}
  	\mathcal M_1(\omega) := & \ \{u^\infty(\hat x, k, \omega) \,;\, \forall \hat x \in \mathbb{S}^2,\, \forall k \in \R_+ \} && \rightarrow \quad \sigma^2, \\
  	\mathcal M_2(\omega) := & \ \{u^\infty(\hat x, k, d, \omega) \,;\, \forall \hat x \in \mathbb{S}^2,\, \forall k \in \R_+,\, \forall d \in \mathbb{S}^2 \} && \rightarrow \quad V, \\
  	\mathcal M_3 := & \ \{u^\infty(\hat x, k, d, \omega) \,;\, \forall \hat x \in \mathbb{S}^2,\, \forall k \in \R_+,\, d\  \text{fixed},\, \forall \omega \in \Omega\, \} && \rightarrow \quad f.
  	\end{aligned}
  	\right.
  	\end{equation}
	The data set $\mathcal M_1(\omega)$ (abbr.~$\mathcal M_1$) corresponds to the passive measurement ($\alpha = 0$), while the data sets $\mathcal M_2(\omega)$ (abbr.~$\mathcal M_2$) and $\mathcal M_3$ correspond to the active measurements ($\alpha = 1$).
	Different random sample $\omega$ gives different data sets $\mathcal M_1$ and $\mathcal M_2$.
	All of the $\sigma^2$, $V$, $f$ in the {\bf IP} are assumed to be unknown, and our study shows that the data sets $\mathcal M_1$, $\mathcal M_2$, $\mathcal M_3$ can recover $\sigma^2$, $V$, $f$, respectively. 
	The mathematical arguments of our study are constructive and we derive explicitly recovery formulas, which can be employed for numerical reconstruction in future work.}

	In the aforementioned {\bf IP}, we are particularly interested in the case with a single realization, namely the sample $\omega$ is fixed in the recovery of $\sigma^2$ and $V$ in \eqref{eq:ip-SchroEqu2018}.
 	\sq{Intuitively, a particular realization of $\dot B_x$ provides little information about the statistical properties of the random source. 
 	However, our study indicates that a \emph{single realization} of the far-field measurement can be used to uniquely recover the variance function and the potential in certain scenarios. A crucial assumption to make the single-realization recovery possible is that the randomness is independent of the wave number $k$. Indeed, 
	there are assorted applications in which the randomness changes slowly or is independent of time \cite{caro2016inverse, Lassas2008}, and by Fourier transforming into the frequency domain, they actually correspond to the aforementioned situation. 
	The single-realization recovery has been studied in the literature; 
	see, e.g., \cite{caro2016inverse,Lassas2008,LassasA}.
	The idea of this work is mainly motivated by \cite{caro2016inverse}.}

	There are abundant literatures  for the inverse scattering problem associated with either the passive or active measurements.  Given an known potential, the recovery of an unknown source term by the corresponding passive measurement is referred to as the  inverse source problem. We refer to \cite{bao2010multi,Bsource,BL2018,ClaKli,GS1,Isakov1990,IsaLu,Klibanov2013,KS1,WangGuo17,Zhang2015} and the references therein for both theoretical uniqueness/stability results and computational methods for the inverse source problem in the deterministic setting, namely $\sigma\equiv 0$. 
	\sq{The authors are also aware of some study on the inverse source problem concerning the recovery of a random source  \cite{LiLiinverse2018,LiHelinLiinverse2018}.
	In \cite{LiHelinLiinverse2018}, the homogeneous Helmholtz system with a random source is studied.
	Compared with \cite{LiHelinLiinverse2018}, our system \eqref{eq:1} comprises of both unknown source and unknown potential, which make the corresponding study radically more challenging.}

	The determination of a random source by the corresponding passive measurement was also recently studied in \cite{bao2016inverse,Lu1,Yuan1}, and the determination of a random potential by the corresponding active measurement was established in \cite{caro2016inverse}. We also refer to \cite{LassasA} and the references therein for more relevant studies on the determination of a random potential. The simultaneous recovery of an unknown source and its surrounding potential was also investigated in the literature. In \cite{KM1,liu2015determining}, motivated by applications in thermo- and photo-acoustic tomography, the simultaneous recovery of an unknown source and its surrounding medium parameter was considered. The simultaneous recovery study in \cite{KM1,liu2015determining} was confined to the deterministic setting and associated mainly with the passive measurement.

	In this paper, we consider the 
	recovery of an unknown random source and an unknown potential term associated with the Schr\"odinger system \eqref{eq:1}. 
	The major novelty of our unique recovery results compared to those existing ones in the literature is that on the one hand, both the random source and the potential are unknown, and on the other hand, we use both passive and active measurements for the unique recovery. We established three unique recovery results. 
	
	
	\begin{thm} \label{thm:Unisigma-SchroEqu2018}
		\sq{Without knowing $V$ and $f$ in system \eqref{eq:1}, the data set $\mathcal M_1$ can recover $\sigma^2$ almost surely.}
	\end{thm}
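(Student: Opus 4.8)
Here is how I would approach Theorem~\ref{thm:Unisigma-SchroEqu2018}. The plan is to extract $\sigma^2$ from a frequency-averaged autocorrelation of the single passive far-field measurement, exploiting that the randomness is encoded in a mean-zero Gaussian field whose covariance in the frequency variable is (asymptotically) stationary and proportional to the Fourier transform of $\sigma^2$. First I would represent the passive far-field ($\alpha=0$) as an integral of the source against the adjoint distorted plane wave for the operator $-\Delta-k^2+V$. Writing $\psi(\cdot,-k\hat x)$ for this scattering solution, which reduces to $e^{-\mathrm{i}k\hat x\cdot y}$ when $V\equiv 0$, the well-posedness of the \textbf{DP} yields
\[ u^\infty(\hat x,k,\omega)=c\int_D \psi(y,-k\hat x)\big(f(y)+\sigma(y)\dot B_y(\omega)\big)\dif{y} =: A(\hat x,k)+R(\hat x,k,\omega), \]
where $c$ is a universal constant, $A$ is the deterministic contribution of $f$, and $R$ is the mean-zero complex Gaussian carrying the randomness.

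Second, using the white-noise correlation $\mathbb{E}[\dot B_y\dot B_{y'}]=\delta(y-y')$ together with the high-energy expansion $\psi(y,-k\hat x)=e^{-\mathrm{i}k\hat x\cdot y}+\mathcal{O}(1/k)$, which follows from resolvent/Born estimates for the fixed, bounded, compactly supported $V$, I would compute
\[ \mathbb{E}\big[R(\hat x,k)\overline{R(\hat x,k+\tau)}\big] = |c|^2\int_D e^{\mathrm{i}\tau\hat x\cdot y}\sigma^2(y)\dif{y} + o(1) = |c|^2\,\widehat{\sigma^2}(-\tau\hat x)+o(1), \]
as $k\to\infty$. The crucial structural point is that this leading covariance is \emph{independent of $k$} and equals, up to a constant, the Fourier transform $\widehat{\sigma^2}$ evaluated along the ray $\R\hat x$.

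Third, and this is the heart of the single-realization claim, I would form the empirical average $C_K(\hat x,\tau,\omega):=\tfrac1K\int_0^K u^\infty(\hat x,k,\omega)\overline{u^\infty(\hat x,k+\tau,\omega)}\dif{k}$ and prove that, almost surely, $C_K\to |c|^2\,\widehat{\sigma^2}(-\tau\hat x)$ as $K\to\infty$. The terms involving the deterministic part $A$ vanish in the limit because $\widehat f$ decays, while the random autocorrelation converges by an ergodicity argument: to leading order $\mathbb{E}[R(\hat x,k)\overline{R(\hat x,k')}]$ depends only on $k-k'$ and decays as $|k-k'|\to\infty$ by Riemann--Lebesgue, so the frequency process is asymptotically stationary and mixing. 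Controlling the fourth moments via Wick's theorem then forces $\mathrm{Var}(C_K)\to 0$, and a Borel--Cantelli argument along a countable dense set of the parameters $(\hat x,\tau)$ upgrades this to almost sure convergence. Finally, letting $\tau$ range over $\R$ and $\hat x$ over $\mathbb{S}^2$ recovers $\widehat{\sigma^2}$ on all of $\R^3$, and Fourier inversion yields $\sigma^2$; since every quantity above is built from one fixed sample $\omega$ of $\mathcal M_1$, this is a single-realization reconstruction requiring no knowledge of $V$ or $f$.

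The main obstacle I anticipate is concentrated in the third step and is twofold. First, one must ensure that the $V$-dependent corrections to $\psi$ genuinely remain $o(1)$ \emph{after} the frequency average, so that the unknown potential does not contaminate the leading covariance; this needs uniform high-energy control of the distorted plane waves. Second, one must promote $L^2$-convergence of $C_K$ to almost sure convergence, which demands a quantitative decay rate for the frequency correlation strong enough to run Borel--Cantelli. I expect these resolvent and ergodic estimates, rather than the Fourier-inversion bookkeeping, to carry the technical weight of the proof.
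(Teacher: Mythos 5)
Your proposal follows essentially the same route as the paper: decompose the passive far-field into its deterministic mean and a mean-zero Gaussian fluctuation, use the high-energy (Born/Neumann series) expansion to show the $V$-dependent corrections are $\mathcal{O}(1/k)$, identify the leading frequency autocorrelation with $\widehat{\sigma^2}$ along rays $\tau\hat x$, and obtain single-realization recovery via Isserlis/Wick fourth-moment bounds, Chebyshev, and a Borel--Cantelli argument along a polynomially growing sequence of frequency windows, with the cross terms involving $f$ killed by the decay of the mean far-field. The only real divergence is bookkeeping: the paper removes the $(\tau,\hat x)$-dependence of the exceptional null set by a Fubini argument (yielding $\widehat{\sigma^2}(x)$ for a.e.\ $x$), whereas you propose a countable dense parameter set plus continuity of $\widehat{\sigma^2}$ --- a technical variant, not a different method.
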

	
	\begin{rem}
		Theorem~\ref{thm:Unisigma-SchroEqu2018} implies that the variance function can be uniquely recovered without \emph{a priori} knowledge of $f$ or $V$. 
		\sq{Moreover, since the passive measurement $\mathcal M_1$ is used, Theorem \ref{thm:Unisigma-SchroEqu2018} indicates that the variance function can be uniquely recovered by a single realization of the passive scattering measurement. 
		Moreover, for the sake of simplicity, we set the wave number $k$ in the definition of $\mathcal M_1$ to be running over all positive real numbers. But in practice, it is enough to let $k$ be greater than any fixed positive number. This remark equally applies to Theorem \ref{thm:UniPot1-SchroEqu2018}.}
	\end{rem}
	
	
	\begin{thm} \label{thm:UniPot1-SchroEqu2018}
		\sq{Without knowing $\sigma$ and $f$ in system \eqref{eq:1}, the data set $\mathcal M_2$ uniquely recovers the potential $V$.}
	\end{thm}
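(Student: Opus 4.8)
The plan is to use the linearity of \eqref{eq:1} to isolate the part of the far-field carrying the potential, and then to read off $V$ from a high-frequency Born expansion of that part. Since \eqref{eq:1a} is linear, for $\alpha = 1$ I would write $u = u^{(1)} + u^{(2)}$, where $u^{(1)}$ solves the sourceless scattering problem $(-\Delta - k^2 + V)u^{(1)} = 0$ with $u^{(1)} = e^{\mathrm{i}kx\cdot d} + u^{(1),sc}$ and the SRC, while $u^{(2)}$ solves the passive problem $(-\Delta - k^2 + V)u^{(2)} = f + \sigma\dot B_x$ with no incident wave; both involve the same $V$, and the well-posedness of the \textbf{DP} guarantees that both fields and their far-fields exist for the given realization $\omega$. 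The far-field then splits as $u^\infty(\hat x, k, d, \omega) = u^{(1),\infty}(\hat x, k, d) + u^{(2),\infty}(\hat x, k, \omega)$. The decisive point is that $u^{(2)}$ carries no dependence on the incidence direction $d$. Hence, fixing a reference $d_0 \in \mathbb{S}^2$, the difference
\begin{equation*}
D(\hat x, k, d) := u^\infty(\hat x, k, d, \omega) - u^\infty(\hat x, k, d_0, \omega) = u^{(1),\infty}(\hat x, k, d) - u^{(1),\infty}(\hat x, k, d_0)
\end{equation*}
is computable from $\mathcal M_2$, is deterministic, and depends only on $V$ (the unknowns $f$ and $\sigma$ have dropped out).

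Next I would bring in the high-frequency Born expansion of $u^{(1),\infty}$. From the Lippmann--Schwinger representation
\begin{equation*}
u^{(1),\infty}(\hat x, k, d) = -\frac{1}{4\pi}\int_D e^{-\mathrm{i}k\hat x\cdot y}\,V(y)\,u^{(1)}(y,k,d)\dif y,
\end{equation*}
replacing $u^{(1)}$ by the incident wave gives $u^{(1),\infty}(\hat x, k, d) = -\frac{1}{4\pi}\widehat V\!\bigl(k(\hat x - d)\bigr) + R(\hat x, k, d)$, where $\widehat V(\xi) = \int_{\R^3} e^{-\mathrm{i}\xi\cdot y}V(y)\dif y$ and $|R| \le \frac{1}{4\pi}\nrm[\infty]{V}\,|D|^{1/2}\,\nrm[L^2(D)]{u^{(1),sc}}$ by Cauchy--Schwarz. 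A standard high-frequency estimate for the scattered field, $\nrm[L^2(D)]{u^{(1),sc}} = \mathcal O(1/k)$ (which follows from the weighted resolvent bound for $-\Delta - k^2$ at large $k$ together with $|e^{\mathrm{i}ky\cdot d}| \equiv 1$), then yields the uniform remainder bound $|R(\hat x, k, d)| \le C/k$ with $C$ depending only on $D$ and $\nrm[\infty]{V}$.

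Finally I would reconstruct $\widehat V$ pointwise. Fix $\xi \in \R^3$. For $k_n \to \infty$ I would choose $\hat x_n, d_n \in \mathbb{S}^2$ with $k_n(\hat x_n - d_n) = \xi$ and $\hat x_n \to \hat x_\infty$ for some $\hat x_\infty \ne d_0$; such pairs exist since the equation $\hat x - d = \xi/k_n$ is solvable on $\mathbb{S}^2\times\mathbb{S}^2$ once $|\xi|/k_n \le 2$, leaving the freedom to steer the common limiting direction away from $d_0$. Then $|k_n(\hat x_n - d_0)| \to \infty$, so $\widehat V\!\bigl(k_n(\hat x_n - d_0)\bigr) \to 0$ by the Riemann--Lebesgue lemma (as $V \in L^1$), while $\widehat V\!\bigl(k_n(\hat x_n - d_n)\bigr) = \widehat V(\xi)$ stays fixed. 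Combining with the expansion and $R = \mathcal O(1/k_n)$,
\begin{equation*}
\lim_{n\to\infty} D(\hat x_n, k_n, d_n) = -\frac{1}{4\pi}\widehat V(\xi).
\end{equation*}
As $\xi$ is arbitrary, this recovers $\widehat V$ on all of $\R^3$, hence $V$ by Fourier inversion; the procedure is constructive and valid for the fixed realization $\omega$, without any knowledge of $\sigma$ or $f$.

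I expect the main obstacle to be the \emph{uniform} high-frequency control of the remainder $R$, i.e.\ showing the Born error decays like $1/k$ uniformly in $\hat x$ and $d$; this rests on the weighted resolvent estimate for $-\Delta - k^2$ at high energy. The remaining geometric point---that the reference argument $k_n(\hat x_n - d_0)$ can be driven to infinity while the target argument $k_n(\hat x_n - d_n)$ is pinned to $\xi$---is elementary but must be arranged so that $\hat x_\infty \ne d_0$.
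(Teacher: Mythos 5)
Your proposal is correct and follows essentially the same route as the paper: the paper likewise cancels the unknown source terms by differencing the far-field over two incident directions (your splitting $u = u^{(1)} + u^{(2)}$ with a fixed reference $d_0$ is just a repackaging of the paper's subtraction $u_{d_1} - u_{d_2}$), then expands the difference via the Neumann/Born series with a uniform $\mathcal{O}(k^{-1})$ remainder (the paper's Lemma \ref{lemma:FjkEstimated-SchroEqu2018}, resting on the Agmon-type resolvent bound of Lemma \ref{lemma:RkVBounded-SchroEqu2018}), and finally pins $k(\hat x - d_1) = p$ while driving $|k(\hat x - d_2)| \to \infty$ so the reference Fourier term vanishes, exactly as in your choice of $\hat x_n, d_n$ with $\hat x_\infty \neq d_0$. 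The only differences are cosmetic (your fixed $d_0$ versus the paper's $d_2 = p/\nrm{p}$, and normalization conventions for $\widehat V$), so your argument matches the paper's proof in both structure and substance.
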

	
	\begin{rem}
		Theorem \ref{thm:UniPot1-SchroEqu2018} shows that the potential $V$ can be uniquely recovered without knowing the random source, namely $\sigma$ and $f$. Moreover, we only make use of 
		a single realization of the active scattering measurement. 
	\end{rem}

	\begin{thm} \label{thm:UniSou1-SchroEqu2018}
		\sq{In system \eqref{eq:1}, suppose that $\sigma$ is unknown and the potential $V$ is known in advance. Then there exists a positive constant $C$ that depends only on $D$ such that if $\nrm[L^\infty(\R^3)]{V} < C$, 
		the data set $\mathcal M_3$ can uniquely recover the expectation $f$.}
	\end{thm}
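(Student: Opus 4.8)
The plan is to exploit the fact that $\mathcal M_3$ ranges over \emph{all} realizations $\omega\in\Omega$ (with $d$ fixed), so that statistical averaging is available. First I would take the expectation of the far-field data. Writing the scattering solution through the Lippmann--Schwinger equation
\[
u(x,\omega)=e^{\mathrm{i}k x\cdot d}+\int_{\mathbb{R}^3}\Phi_k(x,y)\big(f(y)+\sigma(y)\dot B_y(\omega)-V(y)u(y,\omega)\big)\dif y,\qquad \Phi_k(x,y)=\frac{e^{\mathrm{i}k|x-y|}}{4\pi|x-y|},
\]
and using that the spatial white noise is zero-mean, an application of Fubini's theorem (justified by the well-posedness established for the \textbf{DP}) shows that $\bar u:=\mathbb{E}[u]$ solves the \emph{deterministic} system $(-\Delta-k^2+V)\bar u=f$ with incident wave $e^{\mathrm{i}k x\cdot d}$ and outgoing scattered part. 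Consequently $\mathbb{E}[u^\infty(\hat x,k,d,\omega)]=\bar u^\infty(\hat x,k,d)$ is the far-field pattern of a purely deterministic source-plus-potential problem, and it is computable from $\mathcal M_3$.

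Next, because $V$ is assumed known, I would subtract off the contribution of the incident plane wave. Let $v$ be the known scattering solution of $(-\Delta-k^2+V)v=0$, $v=e^{\mathrm{i}k x\cdot d}+v^{sc}$, with its known far field $v^\infty$. By linearity $w:=\bar u-v$ solves $(-\Delta-k^2+V)w=f$ with a purely outgoing field, so $w^\infty(\hat x,k):=\mathbb{E}[u^\infty]-v^\infty$ is the far-field pattern generated by the source $f$ alone in the presence of $V$; note that $w$, and hence $w^\infty$, is independent of $d$, so fixing $d$ costs nothing. The task is thereby reduced to the deterministic inverse source problem: recover $f$ from $\{w^\infty(\hat x,k)\,;\,\hat x\in\mathbb{S}^2,\ k>0\}$ with $V$ known.

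I would then write the source-to-far-field map $\mathcal T_V:f\mapsto w^\infty$ explicitly. From the asymptotics of $\Phi_k$,
\[
4\pi\,(\mathcal T_V f)(\hat x,k)=\int_{\mathbb{R}^3}e^{-\mathrm{i}k\hat x\cdot y}\big(f(y)-V(y)(S_kf)(y)\big)\dif y=\widehat f(k\hat x)-\int_{\mathbb{R}^3}e^{-\mathrm{i}k\hat x\cdot y}V(y)(S_kf)(y)\dif y,
\]
where $S_k$ is the (known, since $V$ is known) Lippmann--Schwinger solution operator $f\mapsto w(\cdot,k)$. The first term is the free map $\mathcal T_0 f$, whose values $\tfrac1{4\pi}\widehat f(k\hat x)$, taken over all $\hat x\in\mathbb{S}^2$ and $k>0$, fill out $\widehat f$ on all of $\mathbb{R}^3$ and hence recover $f$ by Fourier inversion; the second term is a correction $\mathcal R_V f$ of size $O(\nrm[L^\infty]{V})$. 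Thus $\mathcal T_V=\mathcal T_0-\mathcal R_V=\mathcal T_0(I-\mathcal T_0^{-1}\mathcal R_V)$, and for $\nrm[L^\infty]{V}<C$ small enough the operator $\mathcal T_0^{-1}\mathcal R_V$ is a contraction, so $\mathcal T_V$ is invertible; a Neumann series then yields both the uniqueness of $f$ and an explicit reconstruction formula.

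The main obstacle I anticipate is the uniform-in-$k$ control underlying this contraction. Because the effective source $f-V\,S_kf$ depends on the wave number through $S_k$, recovering the single, $k$-independent function $f$ forces one to bound the family $\{S_k\}$ in norms that do not degenerate as $k\to\infty$. This hinges on the fact that the free volume potential $g\mapsto\int_D\Phi_k(\cdot,y)V(y)g(y)\dif y$ has operator norm on $L^2(D)$ bounded by $C_D\nrm[L^\infty]{V}$ uniformly in $k$, whence $S_k=(I+\Phi_k*V)^{-1}(\Phi_k*\,\cdot\,)$ is uniformly bounded once $\nrm[L^\infty]{V}<C:=C_D^{-1}$, with $C_D$, and hence $C$, depending only on $D$. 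Establishing this uniform bound, together with the boundedness of $\mathcal T_0^{-1}\mathcal R_V$ in a compatible norm, is where the real work lies; the measurability/integrability needed to interchange expectation and integration and the final Fourier inversion are then routine.
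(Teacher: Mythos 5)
Your two reduction steps are sound and match the paper's starting point: averaging over $\omega$ (legitimate, since $\mathcal M_3$ contains all realizations) turns the problem into the deterministic inverse source problem with known $V$, and subtracting the known plane-wave scattering isolates the source-generated far field. The gap is in your inversion step, and it is a genuine one: the justification you offer for the contraction property — uniform-in-$k$ boundedness of $S_k$ on $L^2(D)$ for $\nrm[L^\infty]{V}<C_D^{-1}$ — cannot close it. To recover the single $k$-independent function $f$ by Fourier inversion you must bound, by a small multiple of $\nrm[L^2(D)]{f}^2$, the quantity
\begin{equation*}
\nrm[L^2(\R^3)]{\mathcal T_0^{-1}\mathcal R_V f}^2
\;\simeq\;
\int_0^\infty\int_{|\xi|=k}\big|\widehat{V S_k f}(\xi)\big|^2\dif{S(\xi)}\dif{k},
\end{equation*}
i.e.\ the squared $L^2$ norm over \emph{all} spheres of the $k$-dependent effective sources. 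The only estimates your sketch supplies are $\nrm[L^2(D)]{S_kf}\leq C\nrm[L^2(D)]{f}$ uniformly in $k$, together with (implicitly) the pointwise bound $|\widehat h(\xi)|\leq C_D\nrm[L^2(D)]{h}$ for $h$ supported in $D$; feeding these into the display produces an integrand of order $k^2\nrm[L^\infty]{V}^2\nrm[L^2(D)]{f}^2$, so the $k$-integral diverges. Thus uniform operator bounds on $S_k$ do not give boundedness, much less contractivity, of $\mathcal T_0^{-1}\mathcal R_V$ in any norm compatible with Fourier inversion. What is actually needed is (a) a Fourier restriction (Agmon--H\"ormander trace) estimate uniform in $k$, $\int_{|\xi|=k}|\widehat h(\xi)|^2\dif{S}\leq C_D\nrm[L^2(D)]{h}^2$ — obtainable, e.g., from the identity $\mathrm{Im}\,\agl[\Rk h,h]=\frac{\pi}{2k}\int_{|\xi|=k}|\widehat h(\xi)|^2\dif{S}$ combined with Agmon's estimate (Lemmas \ref{lemma:AgmonEst-SchroEqu2018} and \ref{lemma:RkVBounded-SchroEqu2018}) — and (b) the decay $\nrm[L^2(D)]{S_kf}\lesssim k^{-1}\nrm[L^2(D)]{f}$ for large $k$, so that $\int_0^\infty\nrm[L^2(D)]{S_kf}^2\dif{k}<\infty$. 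With (a) and (b) your Neumann-series scheme does close, with a constant depending only on $D$; but neither ingredient appears in your proposal, and they are the heart of the proof rather than the ``routine'' part.

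For comparison, the paper's own proof avoids frequency analysis entirely and is purely a uniqueness (orthogonality) argument. If $f$ and $f'$ produce the same data, then $w:=\mathbb{E}u-\mathbb{E}u'$ has vanishing Cauchy data on $\partial D$ (equal far fields imply equal fields outside $D$ by Rellich's lemma) and solves the source equation in $D$ for every $k$. Pairing with Dirichlet eigenfunctions $v_k$ of $-\Delta-V$ on $D$ — whose eigenvalue wavenumbers are contained in $\mathcal M_3$ since all $k\in\R_+$ are available — and integrating by parts (the boundary terms vanish) yields $\int_D(f-f')v_k\dif{x}=0$ for every eigenfunction. The smallness of $\nrm[L^\infty]{V}$ enters only to invoke completeness of the eigensystem $\{v_k\}$ in $L^2(D)$ (McLean, Theorem 2.37), whence $f=f'$. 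So the smallness hypothesis plays a completely different role there than in your argument; the paper's route is shorter and needs no restriction estimates, though, unlike your approach once repaired, it produces no explicit reconstruction formula.
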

	
	\jz{The rest of the paper is outlined as follows. In Section \ref{sec:MADP-SchroEqu2018}, we present the mathematical analysis of the forward scattering problem given in \eqref{eq:1}. 
	Section \ref{sec:AsyEst-SchroEqu2018}  establishes some asymptotic estimates, which are of key importance in the recovery of the variance function. 
	In Section \ref{sec:RecVar-SchroEqu2018}, we prove the first recovery result of the variance function with a single realization of the passive scattering measurement. 
	Section \ref{sec:RecPS-SchroEqu2018} is devoted to the second and third recovery results of the potential and the random source. We conclude the work with some discussions in \mbox{Section \ref{sec:Conclusions-SchroEqu2018}.}}
	
	\sq{
	\section{Mathematical analysis of the direct problem} \label{sec:MADP-SchroEqu2018}}

	\sq{In this section, the uniqueness and existence of a {\it mild solution} is established for the system \eqref{eq:1}. Before analyzing the direct problem, some preparations are made in the beginning.
	In Section \ref{subsec:NotandAss-SchroEqu2018}, we introduce some preliminaries which are used throughout the rest of the paper. 
	Some technical lemmas that are necessary for the analysis of both the direct and inverse problems are presented in Section \ref{subsec:STLemmas-SchroEqu2018}. 
	In Section \ref{subset:WellDefined-SchroEqu2018}, we give the well-posedness of the direct problem.}

	\subsection{Preliminaries} \label{subsec:NotandAss-SchroEqu2018}
	\sq{Let us first introduce the generalized Gaussian white noise $\dot B_x(\omega)$ \cite{kusuoka1982support}.} 
	To give a brief introduction, we write $\dot B_x(\omega)$ temporarily as $\dot B(x,\omega)$.
	It is known that $\dot B(\cdot,\omega) \in H_{loc}^{-3/2-\epsilon}(\R^3)$ almost surely for any $\epsilon\in\mathbb{R}_+$ \cite{kusuoka1982support}. 
	Then $\dot B \colon \omega \in \Omega \mapsto \dot B(\cdot,\omega) \in \scrD'(D)$ defines a map from the probability space to the space of the generalized functions. 
	Here, $\scrD(D)$ signifies the space consisting of smooth functions that are compactly supported in $D$, and $\scrD'(D)$ signifies its dual space. 
	For any $\varphi \in \scrD(D)$, $\dot B \colon \omega \in \Omega \mapsto \agl[\dot B(x,\omega), \varphi(x)] \in \R$ is assumed to be a Gaussian random variable with zero-mean and $\int_{D} |\varphi(x)|^2 \dif{x}$ as its variance. 
	We also recall that a function $\psi$ in $L_{loc}^1(\R^n)$ defines a distribution through ${\agl[\psi,\varphi] = \int_{\R^n} \psi(x) \varphi(x) \dif{x}}$ \cite{caro2016inverse}. Then $\dot B(x,\omega)$ satisfies:
	\[
	\agl[\dot B(\cdot,\omega), \varphi(\cdot)] \sim \mathcal{N}(0,\nrm[L^2(D)]{\varphi}^2), \quad \forall \varphi \in \scrD(D).
	\]
	Moreover, the covariance of the $\dot B(x,\omega)$ is assumed to satisfy the following property. For every $\varphi$, $\psi$ in $\scrD(D)$, the covariance between $\agl[\dot B(\cdot,\omega), \varphi]$ and $\agl[\dot B(\cdot,\omega), \psi]$ is defined as $\int_{D} \varphi(x) \psi(x) \dif{x}$:
	\begin{equation} \label{eq:ItoIso-SchroEqu2018}
	\mathbb{E} \big( \agl[\dot B(\cdot,\omega), \varphi] \agl[\dot B(\cdot,\omega), \psi] \big) := \int_{D} \varphi(x) \psi(x) \dif{x}.
	\end{equation}
	These aforementioned definitions can be generalized to the case where $\varphi, \psi \in L^2(D)$ by the density arguments. 
	The $\delta(x) \dot B(x,\omega)$ is defined as
	\begin{equation} \label{eq:sigmaB-SchroEqu2018}
	\delta(x) \dot B(x,\omega) \colon \varphi \in L^2(D) \mapsto \agl[\dot B(\cdot,\omega), \delta(\cdot)\varphi(\cdot)] \in \R.
	\end{equation}

	Secondly, let's set
	$$\Phi(x,y) = \Phi_k(x,y) := \frac {e^{ik|x-y|}}{4\pi|x-y|}, \quad x\in\mathbb{R}^3\backslash\{y\}.$$
	$\Phi_k$ is the outgoing fundamental solution, centered at $y$, to the differential operator $-\Delta-k^2$. Define the resolvent operator $\Rk$,
	\begin{equation} \label{eq:DefnRk-SchroEqu2018}
	\Rk(\varphi)(x) = (\Rk \varphi)(x) := \int_{\supp \varphi} \Phi_k(x,y) \varphi(y) \dif{y}, \quad x \in \R^3,
	\end{equation}
	where $\varphi$ can be any measurable function on $\mathbb{R}^3$ as long as the \eqref{eq:DefnRk-SchroEqu2018} is well-defined for almost all $x$ in $\R^3$. Similar to the \eqref{eq:DefnRk-SchroEqu2018}, we define $\Rk(\delta \dot{B}_x)(\omega)$ as
	\begin{equation} \label{eq:RkSigmaBDefn-SchroEqu2018}
	\Rk(\delta \dot{B}_x)(\omega) := \agl[\dot B(\cdot,\omega), \delta(\cdot) \Phi(x,\cdot)],
	\end{equation}
	for any $\delta \in L^{\infty}(\R^3)$ with $\supp \delta \subseteq D$. We write $\Rk(\delta \dot{B}_x)(\omega)$ as $\Rk(\delta \dot{B}_x)$ for short. 
	We may also write $\Rk(\delta \dot{B}_x)$ as $\int_{\R^3} \Phi_k(x,y) \delta(y) \dot B_y \dif{y}$ or $\int_{\R^3} \Phi_k(x,y) \delta(y) \dif{B_y}$. 
	We may omit the subscript $x$ in $\Rk(\delta \dot B_x)$ if it is clear in the context.
	
	Write $\agl[x] := (1+|x|^2)^{1/2}$ for $x \in \R^3$. We introduce the following weighted $L^2$-norm and the corresponding function space over $\R^3$ for any $s \in \R$,
	\begin{equation} \label{eq:WetdSpace-SchroEqu2018}
	\left\{
	\begin{aligned}
	\nrm[L_{s}^2(\R^3)]{f} & := \nrm[L^2(\R^3)]{\agl[\cdot]^{s} f(\cdot)} = \Big( \int_{\R^3} \agl[x]^{2s} |f|^2 \dif{x} \Big)^{\frac 1 2}, \\
	L_{s}^2(\R^3) & := \left\{ f\in L_{loc}^1(\mathbb{R}^3); \nrm[L_{s}^2(\R^3)]{f} < +\infty \right\}.
	\end{aligned}\right.
	\end{equation}
	We also define $L_{s}^2(S)$ for any measurable subset $S$ in $\R^3$ by replacing $\R^3$ in \eqref{eq:WetdSpace-SchroEqu2018} with $S$. In what follows, we may denote $L_s^2(\R^3)$ as $L_s^2$ for short if without ambiguities.
	
	\jz{In the sequel, we write $\mathcal{L}(\mathcal A, \mathcal B)$ to denote the set of all the linear bounded mappings from a norm vector space $\mathcal A$ to a norm vector space $\mathcal B$. 
	For any mapping $\mathcal K \in \mathcal{L}(\mathcal A, \mathcal B)$, we denote its operator norm as $\nrm[\mathcal{L}(\mathcal A, \mathcal B)]{\mathcal K}$. 
	We write the identity operator as $I$.
	We also use notations $C$ and its variants, such as $C_D$ and $C_{D,f}$ to represent some generic constant(s) whose particular definition may change line by line. 
	We use $\mathcal{A}\lesssim \mathcal{B}$ to signify $\mathcal{A}\leq C \mathcal{B}$ and $\mathcal{A} \simeq \mathcal{B}$ to signify $\mathcal{A} = C \mathcal{B}$, for some generic positive constant $C$. We denote ``almost everywhere'' as~``a.e.''~and ``almost surely'' as~``a.s.''~for short. 
	We use $|\mathcal S|$ to denote the Lebesgue measure of any Lebesgue-measurable set $\mathcal S$. Define $M(x) = \sup_{y \in D}|x-y|$, and $\text{diam}\,D := \sup_{x,y \in D} |x-y|$, where $D$ is the bounded domain containing $\supp \sigma$, $\supp V$, $\supp f$ and the origin. 
	Thus we have
	$M(0) \leq \text{diam}\,D < \infty$. It can be verified that
		\begin{equation} \label{eq:Contain-SchroEqu2018}
		\{ y-x \in \R^3 ; |x| \leq 2 M(0), y \in D \} \subseteq \{ z \in \R^3 ; |z| \leq 3\,\text{diam}\,D \}.
		\end{equation}}
	This is because $|y-x| \leq |y| + |x| \leq \text{diam}\,D + 2M(0) \leq 3\text{diam}\,D$.
	
	\subsection{Several technical lemmas} \label{subsec:STLemmas-SchroEqu2018}
	Several important technical lemmas are presented here. 
	
	\begin{lem} \label{lemma:RkBoundedR3-SchroEqu2018}
		For any $\varphi \in L^\infty(\R^3)$ with $\supp \varphi \subseteq D$ and any $\epsilon \in \mathbb{R}_+$, we have
		\[
		\Rk \varphi \in L_{-1/2-\epsilon}^2.
		\]
		
	\end{lem}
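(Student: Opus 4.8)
The plan is to estimate the weighted norm $\nrm[L^2_{-1/2-\epsilon}(\R^3)]{\Rk\varphi}$ by splitting $\R^3$ into the near-field ball $B := \{ x\in\R^3 \,;\, |x| \le 2 M(0) \}$ and its complement $B^c$, treating the local singularity of the kernel $\Phi_k$ and its slow decay at infinity separately. On $B$ the weight $\agl[x]^{-1/2-\epsilon}$ is bounded by $1$, so it suffices to show $\Rk\varphi \in L^2(B)$; on $B^c$ the kernel is smooth in $x$ and decays like $1/|x|$, and the extra power coming from $\epsilon$ will render the resulting integral convergent.

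For the far-field contribution I would first record the geometric lower bound: for $|x| > 2M(0)$ and $y \in D$ one has $|x-y| \ge |x| - |y| \ge |x| - M(0) \ge |x|/2$, so that $|\Phi_k(x,y)| = (4\pi|x-y|)^{-1} \le (2\pi|x|)^{-1}$. Integrating over $\supp\varphi \subseteq D$ and using $\varphi\in L^\infty$ yields the pointwise decay $|\Rk\varphi(x)| \le |D|\,\nrm[L^\infty(\R^3)]{\varphi}/(2\pi|x|)$. Hence, passing to polar coordinates,
\begin{equation*}
\int_{B^c} \agl[x]^{-1-2\epsilon}\,|\Rk\varphi(x)|^2 \dif{x} \lesssim \int_{2M(0)}^{\infty} r^{-3-2\epsilon}\, r^2 \dif{r} = \int_{2M(0)}^{\infty} r^{-1-2\epsilon}\dif{r} < \infty,
\end{equation*}
where finiteness hinges precisely on $\epsilon > 0$; this explains why the borderline weight $s=-1/2$ must be excluded.

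For the near-field contribution the obstacle is the singularity $|x-y|^{-1}$ of $\Phi_k$. I would view $\Rk\varphi$ as the convolution $g*\varphi$ with $g(z) = e^{ik|z|}/(4\pi|z|)$, and, for $x \in B$ and $y \in D$, dominate it by the kernel $\tilde g(z) := (4\pi|z|)^{-1}\chi_{\{|z| \le 3\,\text{diam}\,D\}}(z)$; the inclusion \eqref{eq:Contain-SchroEqu2018} guarantees that $z = x - y$ indeed lies in this ball, so the truncation loses nothing on $B$. Since $|z|^{-1}$ is locally integrable in $\R^3$, we have $\tilde g \in L^1(\R^3)$, and Young's convolution inequality with exponents $(1,2,2)$ gives $\nrm[L^2(\R^3)]{\tilde g * |\varphi|} \le \nrm[L^1]{\tilde g}\,\nrm[L^2(D)]{\varphi} < \infty$ (here $\varphi \in L^2(D)$ follows from $\varphi\in L^\infty$ with compact support). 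Restricting to $B$ and bounding $\agl[x]^{-1/2-\epsilon} \le 1$ there finishes this piece.

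Adding the two estimates yields $\Rk\varphi \in L^2_{-1/2-\epsilon}(\R^3)$. I expect the genuinely delicate step to be the near-field bound rather than the far-field decay: the point is to confirm that the $|x-y|^{-1}$ blow-up is absorbed by three-dimensional local integrability and that the convolution structure lets Young's inequality apply cleanly. The far-field estimate is routine once the lower bound $|x-y|\ge |x|/2$ is in hand, with the strict positivity of $\epsilon$ being exactly what secures integrability at infinity.
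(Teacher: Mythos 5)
Your proof is correct, and it shares the paper's overall architecture: both split $\R^3$ at the radius $2M(0)$ and both invoke the containment \eqref{eq:Contain-SchroEqu2018} to tame the near-field singularity. The key inequalities differ, however. The paper applies Cauchy--Schwarz once, globally, to get $|\Rk\varphi(x)|^2 \lesssim \|\varphi\|_{L^2(D)}^2 \int_D |x-y|^{-2}\,\mathrm{d}y$, and then bounds the kernel integral by a constant on $\{|x|\le 2M(0)\}$ (local integrability of $|z|^{-2}$ in $\R^3$, via \eqref{eq:Contain-SchroEqu2018}) and by $\agl[x]^{-2}$ outside, so the far-field integrand becomes $\agl[x]^{-3-2\epsilon}$. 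You instead treat the near field by Young's convolution inequality with the truncated kernel $\tilde g \in L^1(\R^3)$ (exploiting local integrability of $|z|^{-1}$ rather than $|z|^{-2}$), and the far field by the pointwise bound $|\Rk\varphi(x)| \lesssim \|\varphi\|_{L^\infty}\,|D|\,|x|^{-1}$. Both routes need $\epsilon>0$ only for integrability at infinity, a point your write-up makes usefully explicit. The trade-offs: the paper's single Cauchy--Schwarz estimate consumes only $\|\varphi\|_{L^2(D)}<\infty$, so it in fact proves the lemma for any $\varphi\in L^2$ supported in $D$, and the quantity it controls, $\int_{\R^3}\agl[x]^{-1-2\epsilon}\int_D|x-y|^{-2}\,\mathrm{d}y\,\mathrm{d}x$, is exactly what is recycled in the proof of Lemma \ref{lemma:RkSigmaB-SchroEqu2018}; your far-field step uses the full $L^\infty$ hypothesis (though it could be weakened to $L^2$ by a Cauchy--Schwarz there too). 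Conversely, your argument separates the two difficulties --- local singularity versus slow decay --- more transparently, and the Young's-inequality step is arguably the cleaner way to see that the $|x-y|^{-1}$ blow-up is harmless in three dimensions.
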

	\begin{proof}[Proof of Lemma \ref{lemma:RkBoundedR3-SchroEqu2018}]
		Assume that $\varphi$ belongs to $L^\infty(\R^3)$ with its support contained in $D$. \jz{Obviously we have that $\nrm[L^2(D)]{\varphi} < +\infty$. Using the Cauchy-Schwarz inequality we have
			\begin{align} \label{eq:Rk1-SchroEqu2018}
			\nrm[L_{-1/2-\epsilon}^2]{\Rk \varphi}^2
			& \lesssim \int_{\R^3} \agl[x]^{-1-2\epsilon} \big( \int_D \frac{1}{|x-y|^2} \dif{y} \big) \cdot \big( \int_D |\varphi(y)|^2 \dif{y} \big) \dif{x} \nonumber\\
			& \lesssim \nrm[L^2(D)]{\varphi}^2 \Big[ \int_{|x| \leq 2M(0)} \big( \int_D \frac{1}{|x-y|^2} \dif{y} \big) \dif{x} \nonumber\\
			& \quad\quad + \int_{|x| > 2M(0)} \agl[x]^{-1-2\epsilon} \agl[x]^{-2} \dif{x} \Big].
			\end{align}
			By the change of variable, the first term in the square brackets in \eqref{eq:Rk1-SchroEqu2018} satisfies
			\begin{equation} \label{eq:Rk2-SchroEqu2018}
			\int_{|x| \leq 2M(0)} \big( \int_D \frac{1}{|x-y|^2} \dif{y} \big) \dif{x} = \int_{|x| \leq 2M(0)} \big( \int_{z \in \{ y-x \,;\, y \in D \}} \frac{1}{|z|^2} \dif{z} \big) \dif{x}.
			\end{equation}
			From \eqref{eq:Contain-SchroEqu2018}, we can continue \eqref{eq:Rk2-SchroEqu2018} as
			\begin{align}
			\int_{|x| \leq 2M(0)} \big( \int_D \frac{1}{|x-y|^2} \dif{y} \big) \dif{x}
			& \leq \int_{|x| \leq 2M(0)} \big( \int_{\{z \,;\, |z| \leq 3\,\text{diam}\,D \}} \frac{1}{|z|^2} \dif{z} \big) \dif{x} \nonumber\\
			& = \int_{|x| \leq 2M(0)} \big( 12\pi\,\text{diam}\,D \big) \dif{x} < +\infty. \label{eq:Rk3-SchroEqu2018}
			\end{align}
			Meanwhile, the second term in the square brackets in \eqref{eq:Rk1-SchroEqu2018} satisfies
			\begin{equation} \label{eq:Rk4-SchroEqu2018}
			\int_{|x| > 2M(0)} \agl[x]^{-1-2\epsilon} \agl[x]^{-2} \dif{x} \leq \int_{\R^3} \agl[x]^{-3-2\epsilon} \dif{x} < +\infty.
			\end{equation}
			Note that \eqref{eq:Rk4-SchroEqu2018} holds for every $\epsilon \in \mathbb{R}_+$. Combining \eqref{eq:Rk1-SchroEqu2018}, \eqref{eq:Rk3-SchroEqu2018} and \eqref{eq:Rk4-SchroEqu2018}, we conclude
			\[
			\nrm[L_{-1/2-\epsilon}^2]{\Rk \varphi}^2 < +\infty.
			\]
		}
		The proof is complete.
	\end{proof}
	
	\jz{Now we present a special version of Agmon's estimates for the convenience of our reader (cf. \cite{eskin2011lectures}). This special version will be used when proving Lemma \ref{lemma:RkVBoundedR3-SchroEqu2018}.}
	\begin{lem}[Agmon's estimates \cite{eskin2011lectures}] \label{lemma:AgmonEst-SchroEqu2018}
		For any $\epsilon > 0$, \jz{there exists some $k_0 \geq 2$ such that for any $k > k_0$} we have
		\begin{equation} \label{eq:AgmonEst-SchroEqu2018}
		\nrm[L_{-1/2-\epsilon}^2]{\Rk \varphi} \leq C_\epsilon k^{-1} \nrm[L_{1/2+\epsilon}^2]{\varphi}, \quad \forall \varphi \in L_{1/2+\epsilon}^2
		\end{equation}
		where $C_\epsilon$ is independent of $k$ {and $\varphi$}.
	\end{lem}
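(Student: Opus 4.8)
The plan is to pass to the Fourier side, where $\Rk$ acts as the Fourier multiplier by $(|\xi|^2-k^2-\mathrm{i}0)^{-1}$, and to reduce the weighted bound \eqref{eq:AgmonEst-SchroEqu2018} to a restriction (trace) estimate on the characteristic sphere $\{|\xi|=k\}$. Concretely, I would write $\widehat{\Rk\varphi}(\xi)=\hat\varphi(\xi)/(|\xi|^2-k^2-\mathrm{i}0)$ and use the Sokhotski--Plemelj formula to split the multiplier into a principal-value part $\pv\,(|\xi|^2-k^2)^{-1}$ and a singular part $\mathrm{i}\pi\,\delta(|\xi|^2-k^2)$. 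The estimate then decouples into a bound for the Dirac (spectral-density) piece, which lives on the sphere $\{|\xi|=k\}$, and a bound for the principal-value piece, which sees all of frequency space but is singular only across that sphere.

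The analytic engine for both pieces is the trace lemma for the Fourier transform: for $s>1/2$ the restriction $\varphi\mapsto\hat\varphi|_{\{|\xi|=\rho\}}$ maps $L^2_s$ boundedly into $L^2$ of the sphere, \emph{uniformly} in the radius $\rho\geq 1$. The change of variables $\xi=\rho\theta$ together with the identity $\int_{\mathbb{S}^2}e^{-\mathrm{i}\rho\theta\cdot z}\dif{\theta}=4\pi\rho^{-1}|z|^{-1}\sin(\rho|z|)$ makes both the boundedness and the threshold transparent, and this threshold is exactly what forces the exponent $1/2+\epsilon$ in the statement. Writing $\delta(|\xi|^2-k^2)=(2k)^{-1}\delta(|\xi|-k)$, the Dirac piece becomes $(2k)^{-1}$ times an extension operator off $\{|\xi|=k\}$, which I would bound by duality against the same uniform trace estimate; the prefactor $(2k)^{-1}$ is precisely the source of the $k^{-1}$ decay. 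For the principal-value piece I would foliate $\R^3$ by the spheres $\{|\xi|=\rho\}$, write $(|\xi|^2-k^2)^{-1}=(\rho-k)^{-1}(\rho+k)^{-1}$, and integrate the restricted inner product $\int_{\{|\xi|=\rho\}}\hat\varphi\,\overline{\hat g}\,\dif{\sigma}$ against this kernel in $\rho$. Its Hölder regularity in $\rho$ (again a consequence of $s>1/2$) renders the principal value across $\rho=k$ finite, while the factor $(\rho+k)^{-1}\approx(2k)^{-1}$ near the sphere once more produces $k^{-1}$. The hypothesis $k>k_0\geq 2$ keeps the characteristic sphere bounded away from the origin, so all constants are uniform in $k$.

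An alternative route is a scaling reduction to the fixed energy $k=1$. From $\Phi_k(x,y)=k\,\Phi_1(kx,ky)$ one checks $\Rk\varphi(x)=k^{-2}(\mathcal{R}_1\tilde\varphi)(kx)$ with $\tilde\varphi(y):=\varphi(y/k)$, so the bound for $\Rk$ would follow from the limiting absorption principle for $\mathcal{R}_1$ after transporting the two weighted norms through the dilation $x\mapsto kx$. The subtlety is that the Agmon weights $\agl[\cdot]^{\pm(1/2+\epsilon)}$ are not scale invariant: a naive global transport of the norms gives only the slightly weaker decay $k^{-1+2\epsilon}$, and recovering the sharp exponent $k^{-1}$ requires tracking the weights on the frequency scale $|\xi|\lesssim k$ that actually carries the mass of $\mathcal{R}_1\tilde\varphi$, rather than estimating them everywhere.

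In either approach the main obstacle is obtaining the \emph{sharp} power $k^{-1}$ uniformly in $k$: the decay is entirely due to the oscillation of $e^{\mathrm{i}k|x-y|}$, and any estimate that discards this cancellation—for instance a Schur or Hilbert--Schmidt bound on $\agl[x]^{-1/2-\epsilon}\Phi_k(x,y)\agl[y]^{-1/2-\epsilon}$—both misses the correct power of $k$ and demands a strictly larger weight exponent. Capturing the cancellation precisely through the trace lemma, at the critical exponent $1/2+\epsilon$ and with the honest (radius-uniform) constant, is the crux of the argument; once it is in place the Dirac and principal-value pieces recombine to give \eqref{eq:AgmonEst-SchroEqu2018}.
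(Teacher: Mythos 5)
The paper does not prove this lemma itself; it defers entirely to Eskin's book, and your proposal reconstructs essentially the argument found there: the Sokhotski--Plemelj splitting of the multiplier $(|\xi|^2-k^2-\mathrm{i}0)^{-1}$ into a principal-value part and a spherical delta part, the radius-uniform trace estimate for the exponent $s>1/2$, the H\"older-in-$\rho$ regularity of the sphere traces that renders the principal value finite, and the factor $(2k)^{-1}$ from $\delta(|\xi|^2-k^2)=(2k)^{-1}\delta(|\xi|-k)$ as the source of the sharp $k^{-1}$ decay. Your outline is correct and takes essentially the same route as the paper's cited source, so there is nothing substantive to reconcile.
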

	
	\sq{The proof of Lemma \ref{lemma:AgmonEst-SchroEqu2018} can be found in \cite{eskin2011lectures}. 
	The symbol $k_0$ appearing in Lemma \ref{lemma:AgmonEst-SchroEqu2018} is preserved for future use.}

	\begin{lem} \label{lemma:RkVBoundedR3-SchroEqu2018}
		For any fixed $\epsilon \geq 0$, when $k > k_0$, we have
		$$\nrm[\mathcal{L}(L_{-1/2-\epsilon}^2, L_{-1/2-\epsilon}^2)]{\Rk \circ V} \leq C_{\epsilon,D,V} k^{-1},$$
		where the constant $C_{\epsilon,D,V}$ depends on $\epsilon, D$ and $V$ but is independent of $k$.
	\end{lem}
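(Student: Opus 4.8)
The plan is to factor the composite operator as $\Rk \circ V = \Rk \circ M_V$, where $M_V$ denotes multiplication by the potential $V$, and to extract the decay factor $k^{-1}$ from Agmon's estimate in Lemma~\ref{lemma:AgmonEst-SchroEqu2018}. The only thing standing in the way of applying Agmon directly is a mismatch of weights: the estimate delivers the gain $k^{-1}$ but requires its argument to lie in the heavy-weight space $L_{1/2+\epsilon}^2$, whereas a typical element $\varphi$ of the target space only lies in the light-weight space $L_{-1/2-\epsilon}^2$. The role of the compactly supported potential $V$ is precisely to bridge this two-unit gap in the weight exponent.

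Accordingly, the first step is to show that $M_V$ maps $L_{-1/2-\epsilon}^2$ boundedly into $L_{1/2+\epsilon}^2$, namely
\[
\nrm[L_{1/2+\epsilon}^2]{V\varphi} \leq C_{\epsilon,D,V}\,\nrm[L_{-1/2-\epsilon}^2]{\varphi}, \qquad \forall\, \varphi \in L_{-1/2-\epsilon}^2.
\]
Writing out the weighted norm and using $\supp V \subseteq D$, the integral is confined to $D$, where I would factor the weight as $\agl[x]^{1+2\epsilon} = \agl[x]^{2+4\epsilon}\cdot\agl[x]^{-1-2\epsilon}$. Since $D$ is bounded, the quantity $\agl[x]^{2+4\epsilon}|V(x)|^2$ is bounded on $D$ by a constant $C_{\epsilon,D,V}$ depending only on $\epsilon$, $\text{diam}\,D$ and $\nrm[L^\infty(\R^3)]{V}$; pulling this constant out leaves exactly the $L_{-1/2-\epsilon}^2$-mass of $\varphi$ over $D$, which is dominated by $\nrm[L_{-1/2-\epsilon}^2]{\varphi}^2$. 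This is the step where the compact support of $V$ is indispensable, as otherwise the factor $\agl[x]^{2+4\epsilon}$ would be unbounded and the weight upgrade would fail.

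The second step is then a direct composition. For $k > k_0$ and any $\varphi \in L_{-1/2-\epsilon}^2$, Lemma~\ref{lemma:AgmonEst-SchroEqu2018} applied to $V\varphi \in L_{1/2+\epsilon}^2$ gives
\[
\nrm[L_{-1/2-\epsilon}^2]{\Rk(V\varphi)} \leq C_\epsilon k^{-1}\,\nrm[L_{1/2+\epsilon}^2]{V\varphi} \leq C_\epsilon\,C_{\epsilon,D,V}\,k^{-1}\,\nrm[L_{-1/2-\epsilon}^2]{\varphi},
\]
and taking the supremum over $\varphi$ of unit norm yields the asserted bound, the product $C_\epsilon C_{\epsilon,D,V}$ being absorbed into a single constant independent of $k$. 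I do not anticipate a deep obstacle; the sole point requiring genuine care is the bookkeeping of the weight exponents in the first step, which is exactly what makes the two factors compatible. I would only flag that Lemma~\ref{lemma:AgmonEst-SchroEqu2018} is stated for $\epsilon>0$, so the borderline value $\epsilon=0$ sits at the critical weight $s=1/2$ of the limiting-absorption estimate; the composition argument above applies verbatim for every $\epsilon>0$, and the endpoint should be understood as the corresponding limiting case.
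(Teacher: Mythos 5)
Your proof is correct and takes essentially the same route as the paper's: the paper likewise applies Agmon's estimate (Lemma~\ref{lemma:AgmonEst-SchroEqu2018}) to $Vu$ and uses the compact support of $V$ to obtain $\nrm[L_{1/2+\epsilon}^2]{Vu} \leq C_{D,V}\nrm[L_{-1/2-\epsilon}^2]{u}$, just with the two steps stated in the opposite order. Your closing remark about the endpoint $\epsilon = 0$ is a legitimate concern that the paper's own proof also leaves unaddressed, since it too invokes Agmon's estimate, which is stated only for $\epsilon > 0$.
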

	\begin{proof}[Proof of Lemma \ref{lemma:RkVBoundedR3-SchroEqu2018}]
		By Lemma \ref{lemma:AgmonEst-SchroEqu2018}, when $k > k_0$, we have the following estimate,
		$$\nrm[L_{-1/2-\epsilon}^2]{\Rk V u} = \nrm[L_{-1/2-\epsilon}^2]{\Rk (Vu)} \leq C_\epsilon k^{-1} \nrm[L_{1/2+\epsilon}^2]{Vu}.$$
		Due to the boundedness of $\supp V$, there holds $\nrm[L_{1/2+\epsilon}^2]{Vu} \leq C_{D,V} \nrm[L_{-1/2-\epsilon}^2]{u}$ for some constant $C_{D,V}$ depending on $D$ and $V$ but independent of $u$ and $\epsilon$. Thus, we have
		$$\nrm[L_{-1/2-\epsilon}^2]{\Rk Vu} \leq C_{\epsilon,D,V} k^{-1} \nrm[L_{-1/2-\epsilon}^2]{u}.$$
		
		The proof is complete. 
	\end{proof}
	
	\medskip
	
	\sq{In the rest of the paper, we use $k^*$ to represent the maximum between the quantity $k_0$ 
	originated from Lemma \ref{lemma:AgmonEst-SchroEqu2018}
	and the quantity
	\[
	\sup_{k \in \R_+} \{ k \,;\, \nrm[\mathcal{L}{(L_{-1/2-\epsilon}^2, L_{-1/2-\epsilon}^2)}]{\mathcal{R}_k V} \geq 1 \} \ + \ 1.
	\]
	This choice of $k^*$ guarantees that if $k \geq k^*$, both the inequality \eqref{eq:AgmonEst-SchroEqu2018} and the Neumann expansion
	\(
	{(I - \mathcal{R}_k V)^{-1} \ = \ \sum_{j \geq 0} (\mathcal{R}_k V)^j}
	\)
	in $L_{-1/2-\epsilon}^2$ hold. 
	
	For the subsequent analysis we also need a local version of Lemma \ref{lemma:RkVBoundedR3-SchroEqu2018}.}
	
	\begin{lem} \label{lemma:RkVBounded-SchroEqu2018}
		When $k > k_0$, we have
		\begin{equation} \label{eq:RkVbdd-SchroEqu2018}
		\nrm[\mathcal{L}(L^2(D), L^2(D))]{\Rk V} \leq C_{D,V} k^{-1},
		\end{equation}
		for some constant $C_{D,V}$ depending on $D$ and $V$ but independent of $k$. Moreover, for every $\varphi \in L^2(\R^3)$ with $\supp \varphi \subseteq D$, then
		\begin{equation} \label{eq:VRkbdd-SchroEqu2018}
		\nrm[L^2(D)]{V\Rk \varphi} \leq C_{D,V} k^{-1} \nrm[L^2(D)]{\varphi},
		\end{equation}
		for some constant $C_{D,V}$ depending on $D$ and $V$ but independent of $\varphi$ and $k$.
	\end{lem}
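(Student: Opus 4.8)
The plan is to derive both inequalities directly from Agmon's estimate (Lemma~\ref{lemma:AgmonEst-SchroEqu2018}), using the elementary but crucial observation that on the bounded domain $D$ the weighted norm $\nrm[L_s^2(D)]{\cdot}$ and the plain norm $\nrm[L^2(D)]{\cdot}$ are equivalent for every fixed exponent $s$. Indeed, for $x\in D$ the weight $\agl[x]^s = (1+|x|^2)^{s/2}$ is bounded above and below by positive constants depending only on $s$ and $\text{diam}\,D$, so one may pass freely between the two norms at the cost of a constant $C_D$. I would fix once and for all some $\epsilon>0$ so that Lemma~\ref{lemma:AgmonEst-SchroEqu2018} is available for $k>k_0$; the dependence of the final constant on this fixed $\epsilon$ is harmless and is absorbed into $C_{D,V}$.

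For \eqref{eq:RkVbdd-SchroEqu2018} I take an arbitrary $u\in L^2(D)$. Since $V\in L^\infty$ with $\supp V\subseteq D$, the product $Vu$ belongs to $L^2(\R^3)$ and is supported in $D$, so its global weighted norm equals its local one and is controlled by its $L^2(D)$-norm: $\nrm[L_{1/2+\epsilon}^2(\R^3)]{Vu} = \nrm[L_{1/2+\epsilon}^2(D)]{Vu} \lesssim \nrm[L^2(D)]{Vu} \leq \nrm[L^\infty(\R^3)]{V}\,\nrm[L^2(D)]{u}$. Applying Lemma~\ref{lemma:AgmonEst-SchroEqu2018} to $\varphi=Vu$ and then restricting the resulting function to $D$ yields
\[
\nrm[L^2(D)]{\Rk Vu} \lesssim \nrm[L_{-1/2-\epsilon}^2(D)]{\Rk(Vu)} \leq \nrm[L_{-1/2-\epsilon}^2(\R^3)]{\Rk(Vu)} \leq C_\epsilon k^{-1} \nrm[L_{1/2+\epsilon}^2(\R^3)]{Vu}.
\]
Chaining these bounds gives $\nrm[L^2(D)]{\Rk Vu}\leq C_{D,V} k^{-1}\nrm[L^2(D)]{u}$ with $C_{D,V}$ independent of $k$, which is \eqref{eq:RkVbdd-SchroEqu2018} after taking the supremum over $u$ of unit norm.

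The estimate \eqref{eq:VRkbdd-SchroEqu2018} is treated symmetrically. For $\varphi\in L^2(\R^3)$ with $\supp\varphi\subseteq D$, I first peel off $V$ through $\nrm[L^2(D)]{V\Rk\varphi}\leq \nrm[L^\infty(\R^3)]{V}\,\nrm[L^2(D)]{\Rk\varphi}$, then pass to the global weighted norm and invoke Lemma~\ref{lemma:AgmonEst-SchroEqu2018}:
\[
\nrm[L^2(D)]{\Rk\varphi} \lesssim \nrm[L_{-1/2-\epsilon}^2(D)]{\Rk\varphi} \leq \nrm[L_{-1/2-\epsilon}^2(\R^3)]{\Rk\varphi} \leq C_\epsilon k^{-1} \nrm[L_{1/2+\epsilon}^2(\R^3)]{\varphi}.
\]
Because $\varphi$ is supported in $D$, the last factor is $\lesssim \nrm[L^2(D)]{\varphi}$, and collecting the constants produces \eqref{eq:VRkbdd-SchroEqu2018}.

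I do not expect a genuine obstacle here: Agmon's estimate already furnishes the decisive $k^{-1}$ decay, and the boundedness of $D$ supplies the norm equivalences that localize the global estimate. The only steps demanding care are the bookkeeping ones, namely checking that every norm-equivalence constant depends solely on $D$ and the fixed $\epsilon$ (hence ultimately only on $D$ and $V$), and that the support hypotheses on $Vu$ and on $\varphi$ are precisely what legitimately allow us to replace the global weighted norms by $L^2(D)$-norms without smuggling in any hidden $k$-dependence.
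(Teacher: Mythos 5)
Your proof is correct and follows essentially the same route as the paper's: both derive the two estimates from Agmon's estimate (Lemma~\ref{lemma:AgmonEst-SchroEqu2018}) combined with the equivalence of weighted and unweighted $L^2$-norms on the bounded domain $D$ and the boundedness and compact support of $V$. The only cosmetic difference is that the paper fixes $\epsilon=\tfrac12$ (working with $L_{-1}^2$ and $L_1^2$) while you keep a general fixed $\epsilon>0$; this changes nothing of substance.
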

	\begin{proof}
		\jz{For any $\varphi \in L^2(D)$, thanks to the boundedness of $D$ we have
			\begin{equation} \label{eq:RkVbddInter1-SchroEqu2018}
			\nrm[L^2(D)]{\Rk V\varphi} \leq C_D \nrm[L_{-1}^2]{\Rk (V\varphi)}.
			\end{equation}
			By Lemma \ref{lemma:AgmonEst-SchroEqu2018} (letting the $\epsilon$ in Lemma \ref{lemma:AgmonEst-SchroEqu2018} be $\frac 1 2$), we conclude that
			\begin{equation} \label{eq:RkVbddInter2-SchroEqu2018}
			\nrm[L_{-1}^2]{\Rk (V\varphi)} \leq C k^{-1} \nrm[L_1^2]{V\varphi}.
			\end{equation}
			By virtue of the boundedness of $V$, we have
			\begin{equation} \label{eq:RkVbddInter3-SchroEqu2018}
			\nrm[L_1^2]{V\varphi} \leq C_{D,V}\nrm[L^2(D)]{\varphi}.
			\end{equation}
			Combining \eqref{eq:RkVbddInter1-SchroEqu2018}-\eqref{eq:RkVbddInter3-SchroEqu2018}, we arrive at \eqref{eq:RkVbdd-SchroEqu2018}.
			
			
			To prove \eqref{eq:VRkbdd-SchroEqu2018}, by Lemma \ref{lemma:AgmonEst-SchroEqu2018}, we have
			$$\nrm[L^2(D)]{\Rk \varphi} \leq C_D \nrm[L_{-1}^2]{\Rk \varphi} \leq C_D k^{-1} \nrm[L_1^2]{\varphi} \leq C_D k^{-1} \nrm[L^2(D)]{\varphi}.$$
			Therefore,
			$$\nrm[L^2(D)]{V\Rk \varphi} \leq \nrm[L^\infty(D)]{V} \cdot \nrm[L^2(D)]{\Rk \varphi} \leq C_{D,V} k^{-1} \nrm[L^2(D)]{\varphi}.$$
			
			The proof is complete.}
	\end{proof}
	
	\medskip
	
	Lemma \ref{lemma:RkSigmaB-SchroEqu2018} shows some basic properties of $\Rk(\sigma \dot{B}_x)$ defined in \eqref{eq:RkSigmaBDefn-SchroEqu2018}.
	\begin{lem} \label{lemma:RkSigmaB-SchroEqu2018}
		We have
		$$\Rk(\sigma \dot B_x) \in L_{-1/2-\epsilon}^2 \quad \ass.$$
		Moreover, we have
		\begin{equation*}
		\mathbb{E} \nrm[L^2(D)]{\Rk(\sigma \dot B_x)} < C < +\infty
		\end{equation*}
		for some constant $C$ independent of $k$.
	\end{lem}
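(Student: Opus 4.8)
The plan is to exploit that, for each fixed $x \in \R^3$, the quantity $\Rk(\sigma \dot B_x)(\omega) = \agl[\dot B(\cdot,\omega), \sigma(\cdot)\Phi_k(x,\cdot)]$ is a complex-valued zero-mean Gaussian random variable whose second moment is governed by the isometry \eqref{eq:ItoIso-SchroEqu2018}. First I would note that since $\int_D |x-y|^{-2}\dif{y} < +\infty$ for every $x$ (as already computed in \eqref{eq:Rk2-SchroEqu2018}--\eqref{eq:Rk3-SchroEqu2018}), the function $\sigma(\cdot)\Phi_k(x,\cdot)$ lies in $L^2(D)$, so the pairing in \eqref{eq:RkSigmaBDefn-SchroEqu2018} is well-defined by the density extension of \eqref{eq:ItoIso-SchroEqu2018}. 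Splitting $\Phi_k(x,\cdot)$ into its real and imaginary parts and applying \eqref{eq:ItoIso-SchroEqu2018} to each, I expect the key identity
\begin{equation*}
\mathbb{E} |\Rk(\sigma \dot B_x)|^2 = \int_D \sigma^2(y)\, |\Phi_k(x,y)|^2 \dif{y} = \frac{1}{16\pi^2}\int_D \frac{\sigma^2(y)}{|x-y|^2}\dif{y}.
\end{equation*}
The crucial observation is that $|\Phi_k(x,y)|^2 = (16\pi^2 |x-y|^2)^{-1}$ is \emph{independent of $k$}, which is exactly what will make the bound in the ``moreover'' part uniform in $k$.

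For the first assertion I would use Tonelli's theorem to interchange $\mathbb{E}$ with the spatial integral, giving
\begin{equation*}
\mathbb{E} \nrm[L_{-1/2-\epsilon}^2]{\Rk(\sigma\dot B_x)}^2 = \int_{\R^3} \agl[x]^{-1-2\epsilon}\, \mathbb{E}|\Rk(\sigma\dot B_x)|^2 \dif{x} \lesssim \nrm[L^\infty]{\sigma^2}\int_{\R^3} \agl[x]^{-1-2\epsilon}\Big(\int_D \frac{1}{|x-y|^2}\dif{y}\Big)\dif{x}.
\end{equation*}
The right-hand side is precisely the quantity shown to be finite in the proof of Lemma \ref{lemma:RkBoundedR3-SchroEqu2018} (via \eqref{eq:Rk1-SchroEqu2018}--\eqref{eq:Rk4-SchroEqu2018}), so $\mathbb{E}\nrm[L_{-1/2-\epsilon}^2]{\Rk(\sigma\dot B_x)}^2 < +\infty$. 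A nonnegative random variable with finite expectation is finite almost surely, which yields $\Rk(\sigma\dot B_x) \in L_{-1/2-\epsilon}^2$ a.s.

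For the ``moreover'' part I would first pass from the first to the second moment by Jensen's (Cauchy--Schwarz) inequality, $\mathbb{E}\nrm[L^2(D)]{\Rk(\sigma\dot B_x)} \leq \big(\mathbb{E}\nrm[L^2(D)]{\Rk(\sigma\dot B_x)}^2\big)^{1/2}$, and then again invoke Tonelli together with the key identity to write
\begin{equation*}
\mathbb{E}\nrm[L^2(D)]{\Rk(\sigma\dot B_x)}^2 = \frac{1}{16\pi^2}\int_D\int_D \frac{\sigma^2(y)}{|x-y|^2}\dif{y}\dif{x}.
\end{equation*}
Since this integrand is independent of $k$ and $\int_D |x-y|^{-2}\dif{y}$ is bounded uniformly in $x$ (again by \eqref{eq:Rk3-SchroEqu2018}), integrating over the bounded set $D$ produces a finite constant $C$ that does not depend on $k$, as required.

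The hard part will be the rigorous justification of the complex Gaussian pairing and the Tonelli interchange: one must verify the joint measurability of $(x,\omega)\mapsto \Rk(\sigma\dot B_x)(\omega)$ and the a.e.-in-$x$ membership $\sigma\Phi_k(x,\cdot)\in L^2(D)$, so that \eqref{eq:ItoIso-SchroEqu2018} genuinely applies after the real/imaginary split. Once these measurability and integrability points are settled, everything else reduces to the elementary integral estimates already established for Lemma \ref{lemma:RkBoundedR3-SchroEqu2018}.
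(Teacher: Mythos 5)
Your proposal is correct and follows essentially the same route as the paper's own proof: both use the covariance identity \eqref{eq:ItoIso-SchroEqu2018} to compute $\mathbb{E}|\Rk(\sigma\dot B_x)|^2 = \frac{1}{16\pi^2}\int_D \sigma^2(y)|x-y|^{-2}\dif{y}$ (the $k$-independence of $|\Phi_k|^2$ being the key point), interchange expectation with the spatial integral, reuse the integral estimates from the proof of Lemma \ref{lemma:RkBoundedR3-SchroEqu2018}, deduce a.s.\ finiteness from finiteness of the expectation, and pass between first and second moments via Cauchy--Schwarz. The only difference is presentational: you flag the real/imaginary splitting and the Tonelli/measurability justification explicitly, which the paper leaves implicit.
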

	
	\begin{proof}
		From \eqref{eq:RkSigmaBDefn-SchroEqu2018}, \eqref{eq:sigmaB-SchroEqu2018} and \eqref{eq:ItoIso-SchroEqu2018}, one can compute,
		\begin{align*}
		\mathbb{E} ( \nrm[L_{-1/2-\epsilon}^2]{\Rk(\sigma \dot{B}_x)}^2 )
		& = \int_{\R^3} \agl[x]^{-1-2\epsilon} \mathbb{E} \big( \agl[\dot B(\cdot,\omega), \sigma(\cdot) \Phi(x,\cdot)] \agl[\dot B(\cdot,\omega), \sigma(\cdot) \overline{\Phi}(x,\cdot)] \big) \dif{x} \\
		& = \int_{\R^3} \agl[x]^{-1-2\epsilon} \int_D \sigma^2(y) \frac 1 {16\pi^2 |x-y|^2} \dif{y} \dif{x} \\
		& \leq C \nrm[L^\infty(D)]{\sigma}^2 \int_{\R^3} \agl[x]^{-1-2\epsilon} \int_D |x-y|^{-2} \dif{y} \dif{x}.
		\end{align*}
		By arguments similar to the ones used in the proof of Lemma \ref{lemma:RkBoundedR3-SchroEqu2018} we arrive at
		\begin{equation} \label{eq:Rksigma2Bounded-SchroEqu2018}
		\mathbb{E} (\nrm[L_{-1/2-\epsilon}^2]{\Rk(\sigma \dot{B}_x)}^2) \leq C_D < +\infty,
		\end{equation}
		for some constant $C_D$ depending on $D$ but not on $k$. By the H\"older inequality applied to the probability measure, (\ref{eq:Rksigma2Bounded-SchroEqu2018}) gives
		\begin{equation} \label{eq:RksigmaBoundedCD-SchroEqu2018}
		\mathbb{E} (\nrm[L_{-1/2-\epsilon}^2]{\Rk(\sigma \dot{B}_x)}) \leq [ \mathbb{E} ( \nrm[L_{-1/2-\epsilon}^2]{\Rk(\sigma \dot{B}_x)}^2 ) ]^{1/2} \leq \sq{C_D^{1/2}} < +\infty,
		\end{equation}
		for some constant $C_D$ independent of $k$. The inequality (\ref{eq:RksigmaBoundedCD-SchroEqu2018}) gives
		$$\Rk(\sigma \dot B_x) \in L_{-1/2-\epsilon}^2 \quad \ass.$$
		By replacing $\R^3$ with $D$ and deleting all the terms $\agl[x]^{-1-2\epsilon}$ in the derivations above, one arrives at $\mathbb{E} \nrm[L^2(D)]{\Rk(\sigma \dot{B}_x)} < +\infty$. 
		The proof is done.
	\end{proof}
	
	\subsection{The well-posedness of the \textbf{DP}} \label{subset:WellDefined-SchroEqu2018}
	
	For a particular realization of the random sample $\omega \in \Omega$, the term $\dot B_x(\omega)$,
	\jz{treated as a function of the spatial argument $x$, could be very rough. 
	The roughness of this term could make these classical second-order elliptic PDEs theories invalid to \eqref{eq:1}.} 
	Due to this reason, the notion of the {\em mild solution} is introduced for random PDEs  (cf. \cite{bao2016inverse}). 
	In what follows, we adopt the mild solution in our problem setting, and we show that this mild solution and the corresponding far-field pattern are well-posed in a proper sense.
	
	Reformulating \eqref{eq:1} into the Lippmann-Schwinger equation formally (cf. \cite{colton2012inverse}), we have
	\begin{equation} \label{eq:LippSch-SchroEqu2018}
	(I - \Rk V) u = \alpha \cdot u^i - \Rk f - \Rk(\sigma \dot B_x),
	\end{equation}
	where the term $\Rk(\sigma \dot B_x)$ is defined by (\ref{eq:RkSigmaBDefn-SchroEqu2018}). Recall that $u^{sc} = u - \alpha \cdot u^i$. From (\ref{eq:LippSch-SchroEqu2018}) we have
	\begin{equation} \label{eq:uscDefn-SchroEqu2018}
	(I - \Rk V) u^{sc} = \alpha \Rk V u^i - \Rk f - \Rk(\sigma \dot B_x).
	\end{equation}
	
	\begin{thm} \label{thm:MildSolUnique-SchroEqu2018}
		\sq{When $k > k^*$, there exists a unique stochastic process $u^{sc}(\cdot,\omega) \colon \R^3 \to \mathbb C$ such that $u^{sc}(x)$ satisfies \eqref{eq:uscDefn-SchroEqu2018} a.s.\,, and ${u^{sc}(\cdot,\omega) \in L_{-1/2-\epsilon}^2 \ass}$for any $\epsilon\in\mathbb{R}_+$. 
		Moreover, we have
		\begin{equation} \label{thm:SolPosed-SchroEqu2018}
			\nrm[L_{-1/2-\epsilon}^2]{u^{sc}(\cdot,\omega)} 
			\lesssim 
			\nrm[L_{1/2+\epsilon}^2]{\alpha V u^i}
			+
			\nrm[L_{1/2+\epsilon}^2]{f}
			+
			\nrm[L_{-1/2-\epsilon}^2]{\Rk(\sigma \dot B_x)}.
		\end{equation}
		Then we call $u(x) := u^{sc} + \alpha \cdot u^i(x)$ the {\em mild solution} to the random scattering system \eqref{eq:1}.}
	\end{thm}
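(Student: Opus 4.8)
The plan is to solve the Lippmann--Schwinger equation \eqref{eq:uscDefn-SchroEqu2018} by inverting $I - \Rk V$ through a Neumann series on the weighted space $L_{-1/2-\epsilon}^2$, and then to read off both existence/uniqueness and the quantitative bound \eqref{thm:SolPosed-SchroEqu2018} directly from this inversion. First I would fix $\epsilon \in \R_+$ and $k > k^*$. By the very definition of $k^*$ together with Lemma \ref{lemma:RkVBoundedR3-SchroEqu2018}, the operator $\Rk V$ is a bounded linear map on $L_{-1/2-\epsilon}^2$ with operator norm strictly less than $1$. Hence $I - \Rk V$ is boundedly invertible on $L_{-1/2-\epsilon}^2$, its inverse being the convergent Neumann series $(I - \Rk V)^{-1} = \sum_{j \geq 0} (\Rk V)^j$, with $\nrm[\mathcal{L}(L_{-1/2-\epsilon}^2, L_{-1/2-\epsilon}^2)]{(I - \Rk V)^{-1}} \leq (1 - \nrm[\mathcal{L}(L_{-1/2-\epsilon}^2, L_{-1/2-\epsilon}^2)]{\Rk V})^{-1} =: C_0 < \infty$. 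Crucially, this inverse is deterministic: it does not depend on $\omega$.

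Next I would verify that the right-hand side $F(\cdot,\omega) := \alpha \Rk V u^i - \Rk f - \Rk(\sigma \dot B_x)$ lies in $L_{-1/2-\epsilon}^2$ almost surely. The first two summands are deterministic: since $V u^i$ and $f$ belong to $L^\infty(\R^3)$ with support in $D$, Lemma \ref{lemma:RkBoundedR3-SchroEqu2018} places both $\Rk(V u^i)$ and $\Rk f$ in $L_{-1/2-\epsilon}^2$; the rough summand $\Rk(\sigma \dot B_x)$ lies in $L_{-1/2-\epsilon}^2$ almost surely by Lemma \ref{lemma:RkSigmaB-SchroEqu2018}. On the almost sure event where all three hold, I would set $u^{sc}(\cdot,\omega) := (I - \Rk V)^{-1} F(\cdot,\omega)$, which by construction solves \eqref{eq:uscDefn-SchroEqu2018} and lies in $L_{-1/2-\epsilon}^2$. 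Uniqueness is then immediate: any two solutions $u_1^{sc}, u_2^{sc}$ in $L_{-1/2-\epsilon}^2$ satisfy $(I - \Rk V)(u_1^{sc} - u_2^{sc}) = 0$, whence $u_1^{sc} = u_2^{sc}$ by injectivity of $I - \Rk V$.

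Finally, for the estimate \eqref{thm:SolPosed-SchroEqu2018} I would apply the operator-norm bound $\nrm[L_{-1/2-\epsilon}^2]{u^{sc}} \leq C_0 \nrm[L_{-1/2-\epsilon}^2]{F}$ and control each piece of $F$ separately. For the two deterministic terms I would invoke Agmon's estimate (Lemma \ref{lemma:AgmonEst-SchroEqu2018}), valid since $k > k^* \geq k_0$, giving $\nrm[L_{-1/2-\epsilon}^2]{\Rk(\alpha V u^i)} \lesssim k^{-1} \nrm[L_{1/2+\epsilon}^2]{\alpha V u^i} \lesssim \nrm[L_{1/2+\epsilon}^2]{\alpha V u^i}$ and likewise $\nrm[L_{-1/2-\epsilon}^2]{\Rk f} \lesssim \nrm[L_{1/2+\epsilon}^2]{f}$, while the white-noise term is retained as $\nrm[L_{-1/2-\epsilon}^2]{\Rk(\sigma \dot B_x)}$; summing reproduces the claimed inequality.

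The step I expect to require the most care is not the algebra but the probabilistic bookkeeping: I must confirm that $\omega \mapsto u^{sc}(\cdot,\omega)$ is genuinely a stochastic process, i.e.\ measurable as a map into $L_{-1/2-\epsilon}^2$. This should follow because $F(\cdot,\omega)$ is measurable in $\omega$ (the randomness enters only through the pairing defining $\Rk(\sigma \dot B_x)$ in \eqref{eq:RkSigmaBDefn-SchroEqu2018}, which is measurable by its very construction), and $(I - \Rk V)^{-1}$ is a fixed bounded linear operator, so composition preserves measurability. One also intersects the relevant almost-sure events from Lemma \ref{lemma:RkSigmaB-SchroEqu2018} to obtain a single full-measure set on which $F \in L_{-1/2-\epsilon}^2$, the inversion is valid, and all stated conclusions hold simultaneously.
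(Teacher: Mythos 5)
Your proposal is correct and follows essentially the same route as the paper: invert $I - \Rk V$ by the Neumann series on $L_{-1/2-\epsilon}^2$ (valid since $k > k^*$), place $F = \alpha \Rk V u^i - \Rk f - \Rk(\sigma \dot B_x)$ in $L_{-1/2-\epsilon}^2$ almost surely via Lemmas \ref{lemma:RkBoundedR3-SchroEqu2018}, \ref{lemma:RkVBoundedR3-SchroEqu2018} and \ref{lemma:RkSigmaB-SchroEqu2018}, define $u^{sc} = (I-\Rk V)^{-1}F$, and obtain \eqref{thm:SolPosed-SchroEqu2018} from the geometric-series operator bound together with Agmon's estimate (Lemma \ref{lemma:AgmonEst-SchroEqu2018}). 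Your added remark on the $\omega$-measurability of $u^{sc}$ is a small refinement the paper leaves implicit, but it does not change the argument.
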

	
	\begin{proof}
		\sq{By Lemmas \ref{lemma:RkBoundedR3-SchroEqu2018}, \ref{lemma:RkVBoundedR3-SchroEqu2018} and \ref{lemma:RkSigmaB-SchroEqu2018}, we see
		$$F := \alpha \Rk V u^i - \Rk f - \Rk(\sigma \dot B_x) \in L_{-1/2-\epsilon}^2.$$
		Note that $k > k^*$, so the term $\sum_{j=0}^\infty (\Rk V)^j$ is well-defined, thus the term $\sum_{j=0}^\infty (\Rk V)^j F$ belongs to $L_{-1/2-\epsilon}^2$. Because $\sum_{j=0}^\infty (\Rk V)^j = (I - \Rk V)^{-1}$, we see $(I - \Rk V)^{-1} F \in L_{-1/2-\epsilon}^2$. Let $u^{sc} := (I - \Rk V)^{-1} F \in L_{-1/2-\epsilon}^2$, then $u^{sc}$ is the unique solution of \eqref{eq:uscDefn-SchroEqu2018}. That is, the existence  of the mild solution is proved The uniqueness of the mild solution follows from the invertibility of the operator $(I - \Rk V)^{-1}$. 
		
		From \eqref{eq:uscDefn-SchroEqu2018} and Lemmas \ref{lemma:AgmonEst-SchroEqu2018}-\ref{lemma:RkVBoundedR3-SchroEqu2018}, we have
		\begin{align*}
		\nrm[L_{-1/2-\epsilon}^2]{u^{sc}(\cdot,\omega)}
		& = \nrm[L_{-1/2-\epsilon}^2]{(I - \Rk V)^{-1} (\alpha \Rk V u^i - \Rk f - \Rk(\sigma \dot B_x))} \\
		& \leq \sum_{j \geq 0} \nrm[\mathcal L(L_{-1/2-\epsilon}^2,L_{-1/2-\epsilon}^2)]{\Rk V}^j \cdot \nrm[L_{-1/2-\epsilon}^2]{\alpha \Rk V u^i - \Rk f - \Rk(\sigma \dot B_x)} \\
		& \leq C (
		\nrm[L_{-1/2-\epsilon}^2]{\alpha \Rk V u^i}
		+
		\nrm[L_{-1/2-\epsilon}^2]{\Rk f}
		+
		\nrm[L_{-1/2-\epsilon}^2]{\Rk(\sigma \dot B_x)}) \\
		& \leq C (
		\nrm[L_{1/2+\epsilon}^2]{\alpha V u^i}
		+
		\nrm[L_{1/2+\epsilon}^2]{f}
		+
		\nrm[L_{-1/2-\epsilon}^2]{\Rk(\sigma \dot B_x)}
		).
		\end{align*}
		Therefore \eqref{thm:SolPosed-SchroEqu2018} is proved. 
		The proof is complete.}
	\end{proof}

	\bigskip

	Next we show that the far-field pattern is well-defined in the $L^2$ sense. From \eqref{eq:uscDefn-SchroEqu2018} we derive that
	\begin{align*}
	u^{sc}
	& = (I - \Rk V)^{-1} \big( \alpha \Rk V u^i - \Rk f - \Rk(\sigma \dot B_x) \big) \\
	& = \Rk (I - V \Rk)^{-1} (\alpha V u^i - f - \sigma \dot B_x).
	\end{align*}
	Therefore, we define the far-field pattern of the scattered wave $u^{sc}(x,k,d,\omega)$ formally in the following manner,
	\begin{equation} \label{eq:uInftyDefn-SchroEqu2018}
	u^\infty(\hat x,k,d,\omega) := \frac 1 {4\pi} \int_D e^{-ik\hat x \cdot y} (I - V \Rk)^{-1} (\alpha V u^i - f - \sigma \dot B_y) \dif{y}, \quad \hat x \in \mathbb{S}^2.
	\end{equation}
	
	The another result concerning the \textbf{DP} is Theorem \ref{thm:FarFieldWellDefined-SchroEqu2018}, showing that $u^\infty(\hat x,k,d,\omega)$ is well-defined.
	\begin{thm} \label{thm:FarFieldWellDefined-SchroEqu2018}
		Define the far-field pattern of the mild solution as in \eqref{eq:uInftyDefn-SchroEqu2018}. When \jz{$k > k^*$}, there is a subset \jz{$\Omega_0 \subset \Omega$} with zero measure $\mathbb P (\Omega_0) = 0$, such that there holds
		\[
		u^\infty(\hat x,k,d,\omega) \in L^2(\mathbb{S}^2),\ \  \Forall \omega \in \Omega \backslash \Omega_0.
		\]
	\end{thm}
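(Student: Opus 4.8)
The plan is to split the integrand of \eqref{eq:uInftyDefn-SchroEqu2018} into a part that is almost surely a genuine $L^2(D)$ function and an irreducibly rough part carrying the white noise, and to treat the two pieces by different means. Since $\nrm[\mathcal L(L^2(D),L^2(D))]{V\Rk} < 1$ for $k > k^*$ (by \eqref{eq:VRkbdd-SchroEqu2018} in Lemma \ref{lemma:RkVBounded-SchroEqu2018}, enlarging $k^*$ if necessary), the operator $(I - V\Rk)^{-1}$ is bounded on $L^2(D)$ and satisfies $(I - V\Rk)^{-1} = I + (I - V\Rk)^{-1}V\Rk$. I would use this to give rigorous meaning to the integrand as
\[
(I - V\Rk)^{-1}(\alpha V u^i - f - \sigma \dot B_y) = g_\omega(y) - \sigma(y)\dot B_y,
\]
where
\[
g_\omega := (I - V\Rk)^{-1}(\alpha V u^i - f) - (I - V\Rk)^{-1}V\,\Rk(\sigma \dot B_y).
\]
Only the explicit term $\sigma\dot B_y$ remains rough; everything else is absorbed into $g_\omega$. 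Consequently
\[
u^\infty(\hat x,k,d,\omega) = \frac{1}{4\pi}\int_D e^{-ik\hat x\cdot y} g_\omega(y)\dif{y} - \frac{1}{4\pi}\int_D e^{-ik\hat x\cdot y}\sigma(y)\dif{B_y}.
\]

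Next I would show $g_\omega \in L^2(D)$ almost surely. The deterministic term $\alpha V u^i - f$ lies in $L^\infty(D) \subset L^2(D)$, and $(I - V\Rk)^{-1}$ is bounded on $L^2(D)$; the random term $\Rk(\sigma\dot B_y)$ belongs to $L^2(D)$ a.s.\ by Lemma \ref{lemma:RkSigmaB-SchroEqu2018}, so $V\Rk(\sigma\dot B_y) \in L^2(D)$ a.s.\ and applying the bounded operator $(I - V\Rk)^{-1}$ preserves this. Hence there is a null set $\Omega_1$ such that $g_\omega \in L^2(D)$ for $\omega \in \Omega\setminus\Omega_1$. For such $\omega$, the Cauchy--Schwarz inequality gives
\[
\Big| \int_D e^{-ik\hat x\cdot y} g_\omega(y)\dif{y} \Big| \leq |D|^{1/2}\,\nrm[L^2(D)]{g_\omega} \quad \Forall \hat x \in \mathbb{S}^2,
\]
so the first far-field term is bounded on $\mathbb{S}^2$; as $\mathbb{S}^2$ has finite surface measure, it lies in $L^2(\mathbb{S}^2)$.

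For the rough part I would consider the Gaussian field
\[
G(\hat x,\omega) := \int_D e^{-ik\hat x\cdot y}\sigma(y)\dif{B_y} = \agl[\dot B(\cdot,\omega),\ \sigma(\cdot)e^{-ik\hat x\cdot(\cdot)}],
\]
the pairing being against the complex test function $\sigma(\cdot)e^{-ik\hat x\cdot(\cdot)} \in L^2(D)$. Applying \eqref{eq:ItoIso-SchroEqu2018} to its real and imaginary parts, and using $|\sigma(y)e^{-ik\hat x\cdot y}|^2 = \sigma^2(y)$, I get $\mathbb E|G(\hat x,\omega)|^2 = \nrm[L^2(D)]{\sigma}^2$ for every $\hat x$. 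Then Tonelli's theorem yields
\[
\mathbb E\,\nrm[L^2(\mathbb{S}^2)]{G(\cdot,\omega)}^2 = \int_{\mathbb{S}^2} \mathbb E|G(\hat x,\omega)|^2 \dif{\hat x} = 4\pi\,\nrm[L^2(D)]{\sigma}^2 < +\infty,
\]
so there is a null set $\Omega_2$ with $G(\cdot,\omega) \in L^2(\mathbb{S}^2)$ for $\omega \in \Omega\setminus\Omega_2$. Taking $\Omega_0 := \Omega_1 \cup \Omega_2$, which is again null, and combining the two estimates shows $u^\infty(\cdot,k,d,\omega) \in L^2(\mathbb{S}^2)$ for all $\omega \in \Omega\setminus\Omega_0$, as claimed.

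The step I expect to be the main obstacle is the rigorous handling of $G$: one must check that $(\hat x,\omega)\mapsto G(\hat x,\omega)$ is jointly measurable so that Tonelli applies to $|G|^2$, and that the covariance identity \eqref{eq:ItoIso-SchroEqu2018}, stated for real test functions, extends to the complex family $\sigma e^{-ik\hat x\cdot}$. Joint measurability is obtained by noting that $\hat x \mapsto \sigma e^{-ik\hat x\cdot}$ is continuous from $\mathbb{S}^2$ into $L^2(D)$ while the pairing is linear and $L^2$-continuous, so $G$ admits a version continuous in $\hat x$; the remaining points are routine.
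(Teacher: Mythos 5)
Your proposal is correct and takes essentially the same approach as the paper's proof: both split off the bare white-noise term $\int_D e^{-\mathrm{i}k\hat x\cdot y}\sigma(y)\dif{B_y}$ from the Neumann-series remainder, control that remainder in $L^2(D)$ via Lemmas \ref{lemma:RkVBounded-SchroEqu2018} and \ref{lemma:RkSigmaB-SchroEqu2018}, and dispose of the rough Gaussian term by the isometry \eqref{eq:ItoIso-SchroEqu2018} together with integration over $\mathbb{S}^2$. The only deviation is organizational---the paper bounds $\mathbb{E}\int_{\mathbb{S}^2}|u^\infty|^2\dif{S}$ for all three pieces at once, whereas you treat the regular part $g_\omega$ pathwise and reserve the expectation/Tonelli argument for the Gaussian term alone---and your added care about joint measurability and complex test functions addresses points the paper leaves implicit.
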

	\begin{proof}[Proof of Theorem \ref{thm:FarFieldWellDefined-SchroEqu2018}]
		\jz{By Lemma \ref{lemma:RkVBounded-SchroEqu2018}, 
		$$\nrm[\mathcal{L}(L^2(D), L^2(D))]{V \Rk} \leq C k^{-1} < 1$$
		when $k$ is sufficiently large. Therefore we have,
		\begin{align}
		|u^\infty(\hat x)|^2
		& \lesssim |D|^2 \cdot \int_D |\sum_{j \geq 0} (V \Rk)^j (\alpha V u^i - f)|^2 \dif{y} \nonumber\\
		& \ \ \ \ + \big| \int_D e^{-ik\hat x \cdot y} \sum_{j \geq 1} (V \Rk)^j (\sigma \dot B_y) \dif{y} \big|^2 \nonumber\\
		& \ \ \ \ + \big| \int_D e^{-ik\hat x \cdot y} \sigma \dot B_y \dif{y} \big|^2 \nonumber\\
		& =: f_1(\hat x, k) + f_2(\hat x, k,\omega) + f_3(\hat x, k,\omega). \label{eq:a1}
		\end{align}
		We next derive estimates on each term $f_j ~(j=1,2,3)$ defined in \eqref{eq:a1}. For $f_1$, we have
		\begin{align}
		f_1(\hat x, k)
		& \leq C |D|^2 \cdot ( \sum_{j \geq 0} k^{-j} \nrm[L^2(D)]{\alpha V u^i - f} )^2 \leq C |D|^2 (\nrm[L^2(D)]{V} + \nrm[L^2(D)]{f})^2. \label{eq:f1-SchroEqu2018}
		\end{align}
		For $f_2$, by utilizing \eqref{eq:VRkbdd-SchroEqu2018}, one can compute
		\begin{equation} \label{eq:f2Inter-SchroEqu2018}
		f_2(\hat x, k, \omega)
		\leq C \int_D |\sum_{j \geq 0} (V \Rk)^j V \Rk(\sigma \dot B_y)|^2 \dif{y} \leq C \big( \sum_{j \geq 0} k^{-j} \nrm[L^2(D)]{V \Rk(\sigma \dot B_y)} \big)^2.
		\end{equation}
		By virtue of the boundedness of the support of $V$, we can continue \eqref{eq:f2Inter-SchroEqu2018} as
		\begin{equation} \label{eq:f2-SchroEqu2018}
		f_2(\hat x, k, \omega) \leq C \big( \sum_{j \geq 0} k^{-j} \nrm[L_{-1/2-\epsilon}^2]{V \Rk(\sigma \dot B_y)} \big)^2 \leq C_V \nrm[L_{-1/2-\epsilon}^2]{\Rk(\sigma \dot B_y)}^2.
		\end{equation}}
		By (\ref{eq:ItoIso-SchroEqu2018}), the expectation of $f_3(\hat x, k, \omega)$ is
		\begin{equation} \label{eq:f3-SchroEqu2018}
		\mathbb{E} f_3(\hat x, k, \omega) = \mathbb{E} |\agl[\dot B_y, e^{-ik\hat x \cdot y} \sigma(y)]|^2 = \int_D |\sigma(y)|^2 \dif{y}.
		\end{equation}
		Combining \eqref{eq:Rksigma2Bounded-SchroEqu2018}, \eqref{eq:a1}-\eqref{eq:f1-SchroEqu2018} and \eqref{eq:f2-SchroEqu2018}-\eqref{eq:f3-SchroEqu2018}, we arrive at
		\begin{align}
		\mathbb E |u^\infty(\hat x)|^2
		& \leq C |D|^2 (\nrm[L^2(D)]{V} + \nrm[L^2(D)]{f})^2 + C_V \mathbb E (\nrm[L_{-1/2-\epsilon}^2]{\Rk(\sigma \dot B_y)}^2) + \int_D |\sigma(y)|^2 \dif{y} \nonumber\\
		& \leq C < +\infty \label{eq:a2-SchroEqu2018}
		\end{align}
		for some positive constant $C$. From \eqref{eq:a2-SchroEqu2018} we arrive at
		\begin{equation} \label{eq:a3-SchroEqu2018}
		\mathbb E \int_{\mathbb{S}^2} |u^\infty(\hat x)|^2 \dif{S} \leq C < +\infty.
		\end{equation}
		Our conclusion follows from \eqref{eq:a3-SchroEqu2018} immediately.
	\end{proof}
	

	\sq{\section{Some asymptotic estimates} \label{sec:AsyEst-SchroEqu2018}}
		
	\sq{This section is devoted to some preparations of the recovery of the variance function. To recovery $\sigma^2(x)$, only the passive far-field patterns are utilized. 
	Therefore, throughout this section, the $\alpha$ in \eqref{eq:1} is set to be 0. Motivated by \cite{caro2016inverse}, our recovery formula of the variance function is of the form
	\begin{equation} \label{eq:example-SchroEqu2018}
	\frac 1 {K} \int_{K}^{2K} \overline{u^\infty(\hat x,k,\omega)} \cdot u^\infty(\hat x,k+\tau,\omega) \dif{k}.
	\end{equation}}
	\sq{After expanding $u^\infty(\hat x,k,\omega)$ in the form of Neumann series, there will be several crossover terms in \eqref{eq:example-SchroEqu2018} which decay in different rates in terms of $K$. In this section, we focus on the asymptotic estimates of these terms, which pave the way to the recovery of $\sigma^2(x)$. The recovery of $\sigma^2(x)$ is presented in the next section.}

	\medskip

	\sq{To start out, we write
	\begin{equation} \label{eq:u1-SchroEqu2018}
	u_1^\infty(\hat x,k,\omega) := u^\infty(\hat x,k,\omega) - \mathbb{E} u^\infty(\hat x,k).
	\end{equation}
	Note that $u_1^\infty$ is independent of the incident direction $d$. Assume that $k > k^*$, then the operator $(I - \Rk V)^{-1}$ has the Neumann expansion $\sum_{j=0}^{+\infty} (\Rk V)^j$. By \eqref{eq:uInftyDefn-SchroEqu2018} and \eqref{eq:u1-SchroEqu2018} we have
	\begin{align}
	u_1^\infty(\hat x,k,\omega)
	\ = & \ \frac {-1} {4\pi} \sum_{j=0}^{+\infty} \int_D e^{-ik\hat x \cdot y} (\Rk V)^j (\sigma \dot B_y) \dif{y}, \quad \hat x \in \mathbb{S}^2 \nonumber\\
	:= & \ \frac {-1} {4\pi} \big[ F_0(k,\hat x) + F_1(k,\hat x) \big], \label{eq:u1InftyDefn-SchroEqu2018}
	\end{align}
	where
	\begin{equation} \label{eq:Fjkx-SchroEqu2018}
	\left\{\begin{aligned}
	F_0(k,\hat x,\omega) & := \int_D e^{-ik \hat x \cdot y} (\sigma \dot{B}_y) \dif{y}, \\
	F_1(k,\hat x,\omega) & := \sum_{j \geq 1} \int_D e^{-ik \hat x \cdot y} (V \Rk)^j (\sigma \dot{B}_y) \dif{y}.
	\end{aligned}\right.
	\end{equation}
	Meanwhile, the expectation of the far-field pattern $\mathbb E u^\infty$ is
	\begin{equation} \label{eq:u2InftyDefn-SchroEqu2018}
	\mathbb E u^\infty(\hat x,k) = \frac {-1} {4\pi} \int_D e^{-ik\hat x \cdot y} (I - V \Rk)^{-1} (f) \dif{y}, \quad \hat x \in \mathbb{S}^2.
	\end{equation}}
	
	\sq{
	\begin{lem} \label{lemma:FarFieldGoToZero-SchroEqu2018}
		We have
		\[
		\lim_{k \to +\infty} |\mathbb E u^\infty(\hat x,k)| = 0 \quad \text{uniformly in } \hat x \in \mathbb{S}^2.
		\]
	\end{lem}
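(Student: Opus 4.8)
The plan is to isolate the leading-order contribution $f$ inside the operator $(I - V\Rk)^{-1}$ and to handle it separately from the remainder, since the two pieces decay to zero by genuinely different mechanisms. Throughout I work with $k > k^*$, so that the Neumann expansion $(I - V\Rk)^{-1} = \sum_{j \geq 0}(V\Rk)^j$ converges in $\mathcal{L}(L^2(D), L^2(D))$ by Lemma \ref{lemma:RkVBounded-SchroEqu2018} and the choice of $k^*$; in particular $\nrm[\mathcal{L}(L^2(D),L^2(D))]{(I-V\Rk)^{-1}} \leq C$ with $C$ independent of $k$.

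First I would split off the $j=0$ term by writing
\[
(I - V\Rk)^{-1} f = f + V\Rk (I - V\Rk)^{-1} f,
\]
and substitute this into \eqref{eq:u2InftyDefn-SchroEqu2018} to obtain $\mathbb E u^\infty(\hat x, k) = T_1(\hat x, k) + T_2(\hat x, k)$, where
\[
T_1(\hat x, k) := \frac{-1}{4\pi} \int_D e^{-ik\hat x \cdot y} f(y) \dif{y}, \qquad T_2(\hat x, k) := \frac{-1}{4\pi} \int_D e^{-ik\hat x \cdot y} \big( V\Rk (I - V\Rk)^{-1} f \big)(y) \dif{y}.
\]
For the remainder $T_2$, the uniform operator bound above gives $\nrm[L^2(D)]{(I - V\Rk)^{-1} f} \leq C \nrm[L^2(D)]{f}$, and then the smoothing estimate \eqref{eq:VRkbdd-SchroEqu2018} yields $\nrm[L^2(D)]{V\Rk (I - V\Rk)^{-1} f} \leq C_{D,V} k^{-1} \nrm[L^2(D)]{f}$. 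Since $|e^{-ik\hat x \cdot y}| = 1$, the Cauchy--Schwarz inequality over the bounded domain $D$ produces $|T_2(\hat x, k)| \leq C |D|^{1/2} k^{-1} \nrm[L^2(D)]{f}$, which tends to $0$ at the explicit rate $k^{-1}$, uniformly in $\hat x \in \mathbb{S}^2$.

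The term $T_1$ is, up to the constant $-1/(4\pi)$, the value at the frequency $\xi = k\hat x$ of the function $g(\xi) := \int_D e^{-i\xi \cdot y} f(y) \dif{y}$, that is $T_1(\hat x, k) = -\tfrac{1}{4\pi} g(k\hat x)$. Since $f \in L^\infty(D)$ with $\supp f \subseteq D$ bounded, the zero-extension of $f$ lies in $L^1(\mathbb{R}^3)$, so by the Riemann--Lebesgue lemma $g$ is continuous and $g(\xi) \to 0$ as $|\xi| \to \infty$. Because $|k\hat x| = k$ for every $\hat x \in \mathbb{S}^2$, this decay is uniform in the direction: given $\varepsilon > 0$, choose $R$ with $|g(\xi)| < \varepsilon$ whenever $|\xi| > R$; then $k > R$ forces $|T_1(\hat x, k)| < \varepsilon/(4\pi)$ for all $\hat x$ simultaneously. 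Combining the estimates for $T_1$ and $T_2$ gives the claim.

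The main subtlety is that the two terms vanish for different reasons, and only their combination delivers the result. The remainder $T_2$ decays at the quantitative rate $k^{-1}$ coming from the smoothing estimate \eqref{eq:VRkbdd-SchroEqu2018}, whereas the leading term $T_1$ carries no algebraic rate in general and decays only qualitatively through Riemann--Lebesgue; a naive $L^2$ bound on the full integrand $(I-V\Rk)^{-1}f$ would only give boundedness, not decay, which is precisely why the splitting is essential. The observation that secures uniformity in $\hat x$ for $T_1$ is that the magnitude of the frequency $k\hat x$ is exactly $k$, independent of direction, so a single threshold $R$ works for all $\hat x$ at once.
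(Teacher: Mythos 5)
Your proof is correct, and it takes a genuinely different route from the paper's. The paper does not split off the free term with a resolvent identity; instead it approximates $f$ in $L^2(D)$ by a test function $\varphi_\epsilon \in \scrD(D)$, integrates by parts twice (the ``stationary phase'' step) to get a quantitative $k^{-2}\nrm[L^2(D)]{\Delta\varphi_\epsilon}$ bound for the smooth piece, and lumps the approximation error $f-\varphi_\epsilon$ together with the entire Neumann tail $\sum_{j\geq 1}(V\Rk)^j f$, which is controlled by Cauchy--Schwarz and Lemma \ref{lemma:RkVBounded-SchroEqu2018}; a final three-epsilon choice of $k$ closes the argument. You instead write $(I - V\Rk)^{-1}f = f + V\Rk(I - V\Rk)^{-1}f$, dispatch the leading term by the Riemann--Lebesgue lemma (with the observation $|k\hat x| = k$ giving uniformity in $\hat x$ for free), and bound the remainder at the explicit rate $k^{-1}$ via the uniform resolvent bound and \eqref{eq:VRkbdd-SchroEqu2018}. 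Morally the two proofs share the same mechanism---Riemann--Lebesgue is itself proved by density plus integration by parts, which is what the paper does by hand---but your packaging is shorter, hides the $\epsilon$-bookkeeping inside a standard cited lemma, and separates more transparently the rate-free decay of the source term from the $O(k^{-1})$ decay of the potential-dependent remainder. Two small points to make explicit: the estimate \eqref{eq:VRkbdd-SchroEqu2018} requires its argument to be supported in $D$, which holds for $(I - V\Rk)^{-1}f$ because $f$ is supported in $D$ and every further Neumann term carries the factor $V$; and your appeal to $k > k^*$ for the expansion of $(I-V\Rk)^{-1}$ on $L^2(D)$ (rather than of $(I-\Rk V)^{-1}$ on the weighted space, which is how $k^*$ was defined) is the same harmless liberty the paper itself takes, justified since Lemma \ref{lemma:RkVBounded-SchroEqu2018} makes $\nrm[\mathcal{L}(L^2(D),L^2(D))]{V\Rk} < 1$ for all sufficiently large $k$, which is all a limit statement needs.
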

	\begin{proof}[Proof of Lemma \ref{lemma:FarFieldGoToZero-SchroEqu2018}]
		Due to the fact that $f \in L^\infty(D) \subset L^2(D)$, we know
		\begin{equation} \label{eq:fApprox-SchroEqu2018}
		\Forall \epsilon > 0, \Exists \varphi_\epsilon \in \scrD(D), \st \nrm[L^2(D)]{f-\varphi_\epsilon} < \epsilon / (2 |D|^{\frac 1 2}).
		\end{equation}
		Recall that $k > k^*$, so $(I - V \Rk)^{-1}$ equals to $I + \sum_{j=1}^{+\infty} (V\Rk)^j$.
		By \eqref{eq:fApprox-SchroEqu2018} and Lemma \ref{lemma:RkVBounded-SchroEqu2018} and utilizing the stationary phase lemma, one can deduce as follows,
		\begin{align}
		|\mathbb E u^\infty(\hat x,k)|
		& \lesssim \big| \int_D e^{-ik \hat x \cdot y} \varphi_\epsilon(y) \dif{y} \big| + \big| \int_D e^{-ik \hat x \cdot y} \big[ f(y) - \varphi_\epsilon(y) + \big( \sum_{j \geq 1} (V\Rk)^j f \big) (y) \big] \dif{y} \big| \nonumber\\
		& \lesssim \big| k^{-2} \int_D e^{-ik \hat x \cdot y} \cdot \Delta \varphi_\epsilon(y) \dif{y} \big| + |D|^{\frac 1 2} \cdot \nrm[L^2(D)]{f - \varphi_\epsilon + \sum_{j \geq 1} (V\Rk)^j f } \nonumber\\
		& \leq k^{-2} \cdot |D|^{\frac 1 2} \cdot \nrm[L^2(D)]{\Delta \varphi_\epsilon} + |D|^{\frac 1 2} \cdot \big( \epsilon/(2 |D|^{\frac 1 2}) + C \sum_{j \geq 1} k^{-j} \nrm[L^2(D)]{f} \big) \nonumber\\
		& = k^{-2} \cdot |D|^{\frac 1 2} \nrm[L^2(D)]{\Delta \varphi_\epsilon} + \epsilon/2 + C (k-1)^{-1} \cdot \nrm[L^2(D)]{f}. \label{eq:u2InftyEst-SchroEqu2018}
		\end{align}
		Write $\mathcal K := \max\{ K_0, \frac 2 {\sqrt{\epsilon}} |D|^{\frac 1 4} \nrm[L^2(D)]{\Delta \varphi_\epsilon}^{\frac 1 2}, 1 + \frac {4C} \epsilon \nrm[L^2(D)]{f} \}$. From (\ref{eq:u2InftyEst-SchroEqu2018}) we have
		$$\Forall k > \mathcal K, \quad |\mathbb E u^\infty(\hat x,k)| < \frac \epsilon 2 + \frac \epsilon 4 + \frac \epsilon 4 = \epsilon, \quad \text{uniformly for } \forall \hat x \in \mathbb{S}^2.$$
		Since the $\epsilon$ is taken arbitrarily, the conclusion follows.
	\end{proof}}
	
	\sq{By substituting \eqref{eq:u1-SchroEqu2018}-\eqref{eq:u2InftyDefn-SchroEqu2018} into \eqref{eq:example-SchroEqu2018}, we obtain several crossover terms among $F_0$, $F_1$ and $\mathbb E u^\infty$. 
	The asymptotic estimates of these crossover terms are the main purpose of Sections \ref{subsec:AELeading-SchroEqu2018} and \ref{subsec:AEHigher-SchroEqu2018}.
	Section \ref{subsec:AELeading-SchroEqu2018} focuses on the estimate of the leading order term while the estimates of the higher order terms are presented in Section \ref{subsec:AEHigher-SchroEqu2018}.}

	\subsection{Asymptotic estimates of the leading order term} \label{subsec:AELeading-SchroEqu2018}
	\jz{Lemma \ref{lemma:LeadingTermErgo-SchroEqu2018} below is the asymptotic estimate of the crossover leading order term. By utilizing the ergodicity, the result of Lemma \ref{lemma:LeadingTermErgo-SchroEqu2018} is also statistically stable. To prove Lemma \ref{lemma:LeadingTermErgo-SchroEqu2018}, we need Lemmas \ref{lem:asconverg-SchroEqu2018}, \ref{lem:IsserlisThm-SchroEqu2018} and \ref{lemma:LeadingTermTechnical-SchroEqu2018}. Lemma \ref{lem:asconverg-SchroEqu2018} is the probabilistic foundation of our single-realization recovery result, and Lemma \ref{lem:IsserlisThm-SchroEqu2018} is called Isserlis' Theorem. In order to keep our arguments flowing, we postpone Lemma \ref{lemma:LeadingTermTechnical-SchroEqu2018} until we finish Lemma \ref{lemma:LeadingTermErgo-SchroEqu2018}.}
	
	\begin{lem} \label{lem:asconverg-SchroEqu2018}
		Assume $X$ and $X_n ~(n=1,2,\cdots)$ be complex-valued random variables, then
		$$X_n \to X \ass
		\quad\text{if and only if}\quad
		\lim_{K_0 \to +\infty} P \big( \bigcup_{j \geq K_0} \{ |X_j - X| \geq \epsilon \} \big) = 0 ~\Forall \epsilon > 0.$$
	\end{lem}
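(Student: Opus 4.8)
The plan is to reduce both sides of the claimed equivalence to the single intermediate condition that $P(A_\infty(\epsilon)) = 0$ for every $\epsilon > 0$, where I set $A_{K_0}(\epsilon) := \bigcup_{j \geq K_0} \{|X_j - X| \geq \epsilon\}$ and $A_\infty(\epsilon) := \bigcap_{K_0 \geq 1} A_{K_0}(\epsilon) = \limsup_{j} \{|X_j - X| \geq \epsilon\}$. First I would write the convergence event set-theoretically. By the very definition of convergence of a sequence, $X_n(\omega) \to X(\omega)$ holds at a sample point $\omega$ precisely when, for each $\epsilon > 0$, the inequality $|X_j - X| < \epsilon$ holds for all large $j$; taking complements, the divergence set is $\{X_n \not\to X\} = \bigcup_{m \geq 1} A_\infty(1/m)$, where it suffices to let the threshold range over the countable set $\{1/m \,;\, m \in \N\}$ because $A_\infty(\epsilon)$ is monotone in $\epsilon$.

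Next I would dispose of the right-hand condition of the lemma. For a fixed $\epsilon$, the sets $A_{K_0}(\epsilon)$ form a nested decreasing family in $K_0$, so continuity from above of the finite measure $P$ yields $\lim_{K_0 \to +\infty} P(A_{K_0}(\epsilon)) = P(A_\infty(\epsilon))$. Consequently the right-hand side of the lemma, namely $\lim_{K_0 \to +\infty} P(A_{K_0}(\epsilon)) = 0$ for all $\epsilon > 0$, is equivalent to $P(A_\infty(\epsilon)) = 0$ for every $\epsilon > 0$.

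Finally I would tie this to almost sure convergence. If $P(A_\infty(\epsilon)) = 0$ for all $\epsilon > 0$, then in particular $P(A_\infty(1/m)) = 0$ for each $m$, so by countable subadditivity $P(\{X_n \not\to X\}) \leq \sum_{m \geq 1} P(A_\infty(1/m)) = 0$, which is exactly $X_n \to X$ almost surely. Conversely, if $X_n \to X$ almost surely then $P\big(\bigcup_{m} A_\infty(1/m)\big) = 0$, forcing each $P(A_\infty(1/m)) = 0$; for an arbitrary $\epsilon > 0$ I choose $m$ with $1/m \leq \epsilon$ and use the inclusion $A_\infty(\epsilon) \subseteq A_\infty(1/m)$ to obtain $P(A_\infty(\epsilon)) = 0$. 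Chaining these two equivalences completes the argument. There is no substantive obstacle here, as the statement is the standard measure-theoretic characterization of almost sure convergence; the only points deserving explicit care are the use of continuity from above (legitimate because $P$ is finite, so $P(A_1(\epsilon)) < \infty$) and the reduction of the uncountable family of thresholds $\epsilon > 0$ to the countable family $\{1/m\}$ so that countable subadditivity applies.
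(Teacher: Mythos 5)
Your proposal is correct and complete: the reduction to $P(A_\infty(\epsilon))=0$ via continuity from above of the finite measure $P$, together with the reduction of the uncountable family of thresholds to the countable family $\{1/m\}$, is exactly the standard characterization of almost sure convergence, and every step (measurability, monotonicity, subadditivity) is handled properly. Note that the paper does not prove this lemma at all---it simply cites Lemma 9.2.4 of Dudley's \emph{Real Analysis and Probability}---so your argument supplies the standard proof that the cited reference contains, rather than deviating from the paper's approach.
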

	\sq{The proof of Lemma \ref{lem:asconverg-SchroEqu2018} can be found in [\citen{dudley2002real}, Lemma 9.2.4].}
	
	\begin{lem}[Isserlis' Theorem \cite{Michalowicz2009}] \label{lem:IsserlisThm-SchroEqu2018}
		Suppose ${(X_{1},\dots, X_{2n})}$ is a zero-mean multi-variate normal random vector, then
		\[
		\mathbb{E} (X_1 X_2 \cdots X_{2n}) = \sum\prod \mathbb{E} (X_i X_j),
		\quad
		\mathbb{E} (X_1 X_2 \cdots X_{2n-1}) = 0.
		\]
		Specially,
		\[
		\mathbb{E} (\,X_{1}X_{2}X_{3}X_{4}\,)
		= \mathbb{E} (X_{1}X_{2})\, \mathbb{E} (X_{3}X_{4}) + \mathbb{E} (X_{1}X_{3})\, \mathbb{E} (X_{2}X_{4}) + \mathbb{E} (X_{1}X_{4})\, \mathbb{E} (X_{2}X_{3}).
		\]
	\end{lem}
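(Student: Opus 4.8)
The plan is to prove Isserlis' Theorem through the joint moment generating function of the Gaussian vector. Writing $\Sigma = (\sigma_{ij})$ with $\sigma_{ij} := \mathbb{E}(X_i X_j)$ for the covariance matrix of the zero-mean normal vector $(X_1,\dots,X_m)$, I would start from the standard closed form
\[
\mathbb{E}\Big( \exp\big( \textstyle\sum_{j=1}^m t_j X_j \big) \Big) = \exp\Big( \tfrac 1 2 \sum_{i,j} \sigma_{ij} t_i t_j \Big) =: \exp\big(Q(t)\big),
\]
valid for all $t = (t_1,\dots,t_m) \in \R^m$. Since both sides are real-analytic in $t$, every mixed moment is obtained by differentiating once in each variable and evaluating at the origin:
\[
\mathbb{E}(X_1 X_2 \cdots X_m) = \frac{\partial^m}{\partial t_1 \partial t_2 \cdots \partial t_m} \exp\big(Q(t)\big) \Big|_{t=0}.
\]

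Next I would expand $\exp(Q) = \sum_{k \geq 0} Q^k / k!$ and note that, as $Q$ is a homogeneous quadratic, $Q^k/k!$ is homogeneous of degree $2k$ in $t$. The operator $\partial_1 \cdots \partial_m$ followed by evaluation at $t=0$ extracts exactly the coefficient of the squarefree monomial $t_1 t_2 \cdots t_m$: any monomial in which some variable is absent is annihilated, and every degree-$m$ monomial other than $t_1 \cdots t_m$ necessarily omits a variable. Hence only the degree-$m$ piece contributes. When $m = 2n-1$ is odd there is no degree-$m$ piece among the $Q^k/k!$, which forces $\mathbb{E}(X_1 \cdots X_{2n-1}) = 0$ and settles the odd case at once.

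For $m = 2n$ the sole contributing term is $Q^n/n!$, and the core of the argument is the combinatorial extraction of the coefficient of $t_1 t_2 \cdots t_{2n}$ from $\frac{1}{n!} \cdot \frac{1}{2^n} \big( \sum_{i,j} \sigma_{ij} t_i t_j \big)^n$. I would argue that each squarefree monomial is produced by selecting a perfect matching (pairing) of $\{1,\dots,2n\}$: distributing the $n$ pairs among the $n$ factors of the product contributes a factor $n!$, while ordering the two indices within each factor of the symmetric sum contributes a factor $2^n$. The main obstacle, and the only place demanding genuine care, is checking that these two combinatorial factors cancel exactly against the $1/n!$ and $1/2^n$, so that the coefficient collapses to the clean sum $\sum_{\mathrm{pairings}} \prod_{\{i,j\}} \sigma_{ij}$ with no stray constant. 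Carrying this out gives $\mathbb{E}(X_1 \cdots X_{2n}) = \sum \prod \mathbb{E}(X_i X_j)$, the sum ranging over all partitions of the indices into pairs, which is precisely the asserted identity; specializing to $n = 2$ reproduces the three-term formula displayed in the lemma.

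As an alternative route that avoids the explicit count, I could instead induct on $n$ via the Gaussian integration-by-parts identity $\mathbb{E}\big(X_1 g(X)\big) = \sum_{j} \sigma_{1j}\, \mathbb{E}\big(\partial_{x_j} g(X)\big)$ applied to $g(X) = X_2 X_3 \cdots X_{2n}$. Since $\partial_{x_j} g$ deletes the factor $X_j$ and vanishes for $j=1$, this yields $\mathbb{E}(X_1 \cdots X_{2n}) = \sum_{j=2}^{2n} \sigma_{1j}\, \mathbb{E}(X_2 \cdots X_{j-1} X_{j+1}\cdots X_{2n})$, reducing to a $(2n-2)$-fold moment and generating the same recursion over pairings with lighter bookkeeping.
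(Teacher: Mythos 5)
Your proposal is correct, but note that the paper does not prove this lemma at all: it simply states it and refers the reader to \cite{Michalowicz2009}, so any self-contained argument is by construction a different route from the paper's. Your moment-generating-function proof is the standard one and is carried out soundly: the parity observation settles the odd case immediately, and the only delicate point --- the cancellation of the $n!$ (ordering of the $n$ quadratic factors) and the $2^n$ (orientation of each pair, using $\sigma_{ij}=\sigma_{ji}$) against the prefactor $\frac{1}{n!\,2^n}$ --- is exactly the step you single out, and your count is right. One small presentational remark: your stated justification that ``only the degree-$m$ piece contributes'' handles monomials missing some variable, but monomials of degree greater than $m$ containing every variable (e.g.\ $t_1^2 t_2\cdots t_m$) are killed not by the differentiation itself but by the subsequent evaluation at $t=0$; this is implicit in your setup, though it deserves a sentence. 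Your alternative route via the Gaussian integration-by-parts identity $\mathbb{E}\big(X_1 g(X)\big)=\sum_j \sigma_{1j}\,\mathbb{E}\big(\partial_{x_j} g(X)\big)$ is also valid and gives a cleaner induction, since each pairing of $\{1,\dots,2n\}$ arises uniquely from first choosing the partner of index $1$. What the paper's citation buys is brevity; what your argument buys is self-containedness, and either of your two proofs would serve as a complete replacement for the reference.
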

	\jz{The proof of Lemma \ref{lem:IsserlisThm-SchroEqu2018} can be found in \cite{Michalowicz2009}.} In what follows, $\widehat{\varphi}$ denotes the Fourier transform of the function $\varphi$ defined as
	\begin{equation*} 
	\widehat{\varphi}(\xi) := (2\pi)^{-n/2} \int_{\R^3} e^{-i x \cdot \xi} \varphi(x) \dif{x}, \quad \xi \in \R^n.
	\end{equation*}

	\medskip

	For the notational convenience, we use ``$\{K_j\} \in P(t)$'' to mean that the sequence $\{K_j\}_{j \in \mathbb{N}^+}$ satisfies $K_j \geq C j^t ~(j \in \mathbb{N}^+)$ for some fixed constant $C > 0$. Throughout the following context, $\gamma$ stands for any fixed positive real number. Lemma \ref{lemma:LeadingTermErgo-SchroEqu2018} gives the asymptotic estimates of the crossover leading order term.
	\begin{lem} \label{lemma:LeadingTermErgo-SchroEqu2018}
		Write
		\begin{equation*}
		X_{0,0}(K,\tau,\hat x,\omega) = \frac 1 K \int_K^{2K} \overline{F_0(k,\hat x,\omega)} \cdot F_0(k+\tau,\hat x,\omega) \dif{k}.
		\end{equation*}
		Assume $\{K_j\} \in P(2+\gamma)$, then for any $\tau > 0$, we have
		\begin{equation*} 
		\lim_{j \to +\infty} X_{0,0}(K_j,\tau,\hat x,\omega) = (2\pi)^{3/2} \widehat{\sigma^2} (\tau \hat x) \quad \ass.
		\end{equation*}
	\end{lem}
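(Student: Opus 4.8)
The plan is to decompose $X_{0,0}$ into its (deterministic) mean and a fluctuation, and then run a second-moment argument along the sparse sequence $\{K_j\}$. First I would compute the mean. Writing $F_0(k,\hat x,\omega)=\agl[\dot B(\cdot,\omega), e^{-ik\hat x\cdot(\cdot)}\sigma(\cdot)]$ and extending the Itô isometry \eqref{eq:ItoIso-SchroEqu2018} \emph{bilinearly} (not conjugate-bilinearly) to complex test functions via $\agl[\dot B,\varphi]:=\agl[\dot B,\mathrm{Re}\,\varphi]+i\agl[\dot B,\mathrm{Im}\,\varphi]$, one obtains for each fixed $k$
\[
\mathbb{E}\big(\overline{F_0(k,\hat x)}\,F_0(k+\tau,\hat x)\big)=\int_D e^{-i\tau\hat x\cdot y}\sigma^2(y)\dif{y}=(2\pi)^{3/2}\widehat{\sigma^2}(\tau\hat x),
\]
which is independent of $k$. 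Averaging over $k\in[K,2K]$ shows $\mathbb{E}X_{0,0}(K,\tau,\hat x)=(2\pi)^{3/2}\widehat{\sigma^2}(\tau\hat x)$ for every $K$; that is, the target value is exactly the mean. It then remains to show the fluctuation $X_{0,0}(K_j)-\mathbb{E}X_{0,0}$ tends to $0$ almost surely.

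Next I would estimate $\mathrm{Var}\,X_{0,0}(K)=\mathbb{E}|X_{0,0}|^2-|\mathbb{E}X_{0,0}|^2$. Expanding $\mathbb{E}|X_{0,0}|^2$ as a double integral over $(k,k')\in[K,2K]^2$ yields a fourth moment of the jointly Gaussian complex family $\{F_0(k,\hat x)\}$, to which I apply Isserlis' Theorem (Lemma~\ref{lem:IsserlisThm-SchroEqu2018}) after complexification by linearity. Setting $G(s):=\int_D e^{-is\hat x\cdot y}\sigma^2(y)\dif{y}$, the three Wick pairings of $(\overline{F_0(k)},F_0(k+\tau),F_0(k'),\overline{F_0(k'+\tau)})$ evaluate to $|G(\tau)|^2$, $|G(k-k')|^2$ and $|G(k+k'+\tau)|^2$; the appearance of a genuine second moment $\mathbb{E}(F_0F_0)$ without conjugation is real because $\dot B$ is real. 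The first pairing equals $|\mathbb{E}X_{0,0}|^2$ and cancels in the variance, leaving
\[
\mathrm{Var}\,X_{0,0}(K)=\frac{1}{K^2}\int_K^{2K}\!\!\int_K^{2K}\Big(|G(k-k')|^2+|G(k+k'+\tau)|^2\Big)\dif{k}\dif{k'}.
\]

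The crux is to show both terms are $O(1/K)$; this is the technical heart of the lemma. The key structural fact is that $G$ is the one-dimensional Fourier transform of the planar projection $g(t):=\int_{\{\hat x\cdot y=t\}}\sigma^2(y)\dif{S}(y)$: since $\sigma^2\in L^\infty$ is compactly supported, $g$ is bounded with compact support, hence $g\in L^2(\R)$ and so $G\in L^2(\R)$ by Plancherel. For the first term, the substitution $s=k-k'$ collapses the square integral to $\frac{1}{K^2}\int_{-K}^{K}|G(s)|^2(K-|s|)\dif{s}\le \frac1K\nrm[L^2(\R)]{G}^2$. For the second term the argument satisfies $k+k'+\tau\ge 2K+\tau$, and an analogous change of variables bounds it by $\frac{C}{K}\int_{2K+\tau}^{\infty}|G(s)|^2\dif{s}=o(1/K)$, the tail of a convergent integral. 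Hence $\mathrm{Var}\,X_{0,0}(K)\le C/K$ with $C=C(\sigma,D,\tau)$ independent of $K$.

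Finally I would conclude via Chebyshev and Lemma~\ref{lem:asconverg-SchroEqu2018}. For fixed $\epsilon>0$,
\[
\mathbb{P}\big(|X_{0,0}(K_j)-(2\pi)^{3/2}\widehat{\sigma^2}(\tau\hat x)|\ge\epsilon\big)\le \frac{\mathrm{Var}\,X_{0,0}(K_j)}{\epsilon^2}\le \frac{C}{K_j\,\epsilon^2}.
\]
Since $\{K_j\}\in P(2+\gamma)$ gives $K_j\ge Cj^{2+\gamma}$, these probabilities are summable in $j$ (indeed $O(1/K)$ would already suffice for any exponent $>1$, so $2+\gamma$ is comfortably enough), and the union bound yields $\mathbb{P}\big(\bigcup_{j\ge K_0}\{|X_{0,0}(K_j)-(2\pi)^{3/2}\widehat{\sigma^2}(\tau\hat x)|\ge\epsilon\}\big)\to 0$ as $K_0\to\infty$, being the tail of a convergent series. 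Lemma~\ref{lem:asconverg-SchroEqu2018} then gives the claimed almost sure convergence. The main obstacle is precisely the variance estimate: organizing the complex Wick expansion while tracking the non-vanishing pseudo-covariance terms forced by the reality of $\dot B$, and establishing both the $L^2(\R)$-integrability and the cross-term tail decay of $G$ through its interpretation as a projection; the probabilistic conclusion is then routine.
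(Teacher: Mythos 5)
Your proposal is correct, and its overall skeleton coincides with the paper's: compute the mean exactly via the It\^o isometry, expand the second moment with Isserlis' theorem (keeping the non-vanishing pseudo-covariance pairing, since $\dot B$ is real), and finish with Chebyshev's inequality plus Lemma \ref{lem:asconverg-SchroEqu2018}. Where you genuinely diverge is in the technical heart, namely the estimate of the two double integrals
\[
\frac 1{K^2}\iint_{[K,2K]^2}\bigl|\widehat{\sigma^2}((k_1-k_2)\hat x)\bigr|^2\dif{k_1}\dif{k_2},
\qquad
\frac 1{K^2}\iint_{[K,2K]^2}\bigl|\widehat{\sigma^2}((k_1+k_2+\tau)\hat x)\bigr|^2\dif{k_1}\dif{k_2}.
\]
The paper handles these in a separate technical result (Lemma \ref{lemma:LeadingTermTechnical-SchroEqu2018}) by an oscillatory-integral argument: it splits $D\times D$ into the near-diagonal set $E_\epsilon=\{|\hat x\cdot z-\hat x\cdot y|<\epsilon\}$ and its complement, balances the two contributions with $\epsilon=K^{-1/2}$, and obtains the rate $\mathcal O(K^{-1/2})$, uniformly in $\hat x$ and $\tau$. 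You instead observe that $G(s)=\int_D e^{-is\hat x\cdot y}\sigma^2(y)\dif y$ is the one-dimensional Fourier transform of the slice projection $g(t)=\int_{\{\hat x\cdot y=t\}}\sigma^2\dif S$, which is bounded with compact support, hence $G\in L^2(\R)$ by Plancherel; the change of variables $s=k_1-k_2$ then gives the sharper rate $\mathcal O(K^{-1})$ for the first term, and the tail $\int_{2K+\tau}^{\infty}|G|^2\dif s=o(1)$ handles the second. Your estimate is both cleaner and quantitatively stronger: it shows the sparsity hypothesis could be relaxed from $P(2+\gamma)$ to $P(1+\gamma)$, and it needs no threshold $K_0>\tau$. (Your constant is also uniform in $\hat x$, since $\|g\|_{L^2(\R)}$ is bounded independently of the direction, so nothing is lost there.) What the paper's more elementary splitting buys in return is reusability: Lemma \ref{lemma:LeadingTermTechnical-SchroEqu2018} is invoked again verbatim in the proof of Lemma \ref{lemma:HOTErgo-SchroEqu2018} to control the cross terms, so it serves double duty in the paper's architecture, whereas your Plancherel argument is tailored to the leading term.
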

	We may denote $X_{0,0}(K,\tau,\hat x,\omega)$ as $X_{0,0}$ for short if it is clear in the context.
	\begin{proof}[Proof of Lemma \ref{lemma:LeadingTermErgo-SchroEqu2018}]
		We have
		\begin{align}
		& \ \mathbb{E} \big( \overline{F_0(k,\hat x,\omega)} F_0(k+\tau,\hat x,\omega) \big) \nonumber\\
		= & \ \mathbb{E} \big( \int_{D_y} e^{ik \hat x \cdot y} \sigma(y) \dif{B_y} \cdot \int_{D_z} e^{-i(k+\tau) \hat x \cdot z} \sigma(z) \dif{B_z} \big) \nonumber\\
		= & \ \int_{D} e^{ik \hat x \cdot y} e^{-i(k+\tau) \hat x \cdot y} \sigma(y) \sigma(y) \dif{y} = (2\pi)^{3/2}\, \widehat{\sigma^2} (\tau \hat x). \label{eq:I0-SchroEqu2018}
		\end{align}
		
		From \eqref{eq:I0-SchroEqu2018} we conclude that
		\begin{align*}
		\mathbb{E} ( X_{0,0} )
		= \frac 1 K \int_K^{2K} \mathbb{E} \big( \overline{F_0(k,\hat x,\omega)} F_0(k+\tau,\hat x,\omega) \big) \dif{k}
		= (2\pi)^{3/2} \widehat{\sigma^2} (\tau \hat x).
		\end{align*}
		By Isserlis' Theorem and \eqref{eq:I0-SchroEqu2018}, and note that $\overline{F_j(k,\hat x,\omega)} = F_j(-k,\hat x,\omega)$, one can compute
		\begin{align}
		& \mathbb{E} \big( | X_{0,0} - (2\pi)^{3/2} \widehat{\sigma^2} (\tau \hat x) |^2 \big) \nonumber\\
		= & \frac 1 {K^2} \int_K^{2K} \int_K^{2K} \mathbb{E} \Big( \overline{F_0(k_1,\hat x,\omega)} F_0(k_1+\tau,\hat x,\omega) F_0(k_2,\hat x,\omega) \overline{F_0(k_2+\tau,\hat x,\omega)} \Big) \dif{k_1} \dif{k_2} \nonumber\\
		& - (2\pi)^3 |\widehat{\sigma^2} \big( \tau \hat x \big)|^2 - (2\pi)^3 |\widehat{\sigma^2} \big( \tau \hat x \big)|^2 + (2\pi)^3 |\widehat{\sigma^2} \big( \tau \hat x \big)|^2 \hspace{1.5cm} (\text{by } \eqref{eq:I0-SchroEqu2018}) \nonumber\\
		= & \frac 1 {K^2} \int_K^{2K} \int_K^{2K} \mathbb{E} \big( \overline{F_0(k_1,\hat x,\omega)} F_0(k_1+\tau,\hat x,\omega) \big) \cdot \mathbb{E} \big( F_0(k_2,\hat x,\omega) \overline{F_0(k_2+\tau,\hat x,\omega)} \big) \nonumber\\
		& + \mathbb{E} \big( \overline{F_0(k_1,\hat x,\omega)} F_0(k_2,\hat x,\omega) \big) \cdot \mathbb{E} \big( F_0(k_1+\tau,\hat x,\omega) \overline{F_0(k_2+\tau,\hat x,\omega)} \big) \nonumber\\
		& + \mathbb{E} \big( \overline{F_0(k_1,\hat x,\omega)} F_0(-k_2-\tau,\hat x,\omega) \big) \cdot \mathbb{E} \big( \overline{F_0(-k_1-\tau,\hat x,\omega)} F_0(k_2,\hat x,\omega) \big) \dif{k_1} \dif{k_2} \nonumber\\
		& - (2\pi)^3 |\widehat{\sigma^2} \big( \tau \hat x \big)|^2 \nonumber\\
		= & \frac {(2\pi)^3} {K^2} \int\limits_K^{2K} \int\limits_K^{2K} |\widehat{\sigma^2}((k_2 - k_1) \hat x)|^2 \dif{k_1} \dif{k_2} + \frac {(2\pi)^3} {K^2} \int\limits_K^{2K} \int\limits_K^{2K} |\widehat{\sigma^2}((k_1 + k_2 + \tau) \hat x)|^2 \dif{k_1} \dif{k_2}. \label{eq:X00Square-SchroEqu2018}
		\end{align}
		\jz{ Note that $|\widehat{\sigma^2} \big( (k_1 - k_2) \hat x \big)| = |\widehat{\sigma^2} \big( -(k_1 - k_2) \hat x \big)|$.} Combining (\ref{eq:X00Square-SchroEqu2018}) and Lemma \ref{lemma:LeadingTermTechnical-SchroEqu2018}, we have
		\begin{equation} \label{eq:X00Bdd-SchroEqu2018}
		\mathbb{E} \big( | X_{0,0} - (2\pi)^{3/2} \widehat{\sigma^2} (\tau \hat x) |^2 \big) = \mathcal{O}(K^{-1/2}), \quad K \to +\infty.
		\end{equation}
		For any integer $K_0 > 0$, by Chebyshev's inequality and (\ref{eq:X00Bdd-SchroEqu2018}) we have
		\begin{align}
		& P \big( \bigcup_{j \geq K_0} \{ | X_{0,0}(K_j) - (2\pi)^{3/2} \widehat{\sigma^2} (\tau \hat x) | \geq \epsilon \} \big) \leq \frac 1 {\epsilon^2} \sum_{j \geq K_0} \mathbb{E} \big( | X_{0,0}(K_j) - (2\pi)^{3/2} \widehat{\sigma^2} (\tau \hat x) |^2 \big) \nonumber\\
		\lesssim & \frac 1 {\epsilon^2} \sum_{j \geq K_0} K_j^{-1/2} = \frac 1 {\epsilon^2} \sum_{j \geq K_0} j^{-1-\gamma/2} \leq \frac 1 {\epsilon^2} \int_{K_0}^{+\infty} (t-1)^{-1-\gamma/2} \dif{t} = \frac 2 {\epsilon^2 \gamma} (K_0-1)^{-\gamma/2}. \label{eq:PX00Epsilon-SchroEqu2018}
		\end{align}
		Here $X_{0,0}(K_j)$ stands for $X_{0,0}(K_j, \tau, \hat x,\omega)$. By Lemma \ref{lem:asconverg-SchroEqu2018}, formula (\ref{eq:PX00Epsilon-SchroEqu2018}) implies that for any fixed $\tau \geq 0$ and fixed $\hat x \in \mathbb{S}^2$, we have
		$$X_{0,0}(K_j,\tau,\hat x,\omega) \to (2\pi)^{3/2} \widehat{\sigma^2} (\tau \hat x)\quad \ass.$$
		The proof is done.
	\end{proof}

	\medskip

	Lemma \ref{lemma:LeadingTermTechnical-SchroEqu2018} plays a critical role in the estimates of the leading order term.
	\begin{lem} \label{lemma:LeadingTermTechnical-SchroEqu2018}
		\jz{Assume that} $\tau \geq 0$ is fixed, then $\exists K_0 > \tau$, and $K_0$ is independent of $\hat x$, such that for all $K > K_0$, we have the following estimates:
		\begin{align}
		\frac {(2\pi)^3} {K^2} \int_K^{2K} \int_K^{2K} \big| \widehat{\sigma^2}((k_1 - k_2) \hat x) \big|^2 \dif{k_1} \dif{k_2} & \leq CK^{-1/2}, \label{eq:F0F0TermOne-SchroEqu2018} \\
		\frac {(2\pi)^3} {K^2} \int_K^{2K} \int_K^{2K} \big| \widehat{\sigma^2}((k_1 + k_2 + \tau) \hat x) \big|^2 \dif{k_1} \dif{k_2} & \leq CK^{-1/2}, \label{eq:F0F0TermTwo-SchroEqu2018}
		\end{align}
		for some constant $C$ independent of $\tau$ and $\hat x$.
	\end{lem}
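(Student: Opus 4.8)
The key structural fact I would exploit is that $\sigma^2 \in L^\infty(D)$ is compactly supported, so $\widehat{\sigma^2}$ is a Schwartz-class function; in particular it is bounded and rapidly decreasing. The plan is to reduce both double integrals to one-dimensional decay estimates by a change of variables, and then bound the resulting single integrals using the $L^2$-integrability (indeed rapid decay) of $\widehat{\sigma^2}$ along any ray through the origin. Since $K_0$ must be chosen independently of $\hat x$, I would keep track of the fact that $\widehat{\sigma^2}(s\hat x)$ depends on $\hat x$ only through the scalar variable $s$, and that the relevant bounds on $\int_\R |\widehat{\sigma^2}(s\hat x)|^2 \dif{s}$ are uniform in $\hat x \in \mathbb{S}^2$ (this uniformity follows from $\sigma^2$ being fixed and compactly supported).

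For the first estimate \eqref{eq:F0F0TermOne-SchroEqu2018}, I would substitute $s = k_1 - k_2$ with $k_2$ fixed, so that for each fixed $k_2 \in [K,2K]$ the inner integral in $k_1$ becomes $\int |\widehat{\sigma^2}(s\hat x)|^2 \dif{s}$ over an interval of length $K$ centered appropriately. The crucial point is that $\int_\R |\widehat{\sigma^2}(s\hat x)|^2 \dif{s} \leq C$ uniformly in $\hat x$, by Plancherel applied along the ray (or directly from rapid decay). This bounds the inner integral by a constant $C$ independent of $k_2$ and $\hat x$; integrating the remaining $k_2$ over $[K,2K]$ gives a factor $K$, so the whole expression is $\lesssim K^{-2}\cdot K \cdot C = CK^{-1}$, which is even stronger than the claimed $CK^{-1/2}$. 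I expect this term to be the easy one.

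For the second estimate \eqref{eq:F0F0TermTwo-SchroEqu2018}, the argument $(k_1 + k_2 + \tau)$ ranges over $[2K+\tau, 4K+\tau]$, so the integrand is evaluated only at large arguments, where $\widehat{\sigma^2}(s\hat x)$ is small by rapid decay. Here I would change variables to $s = k_1 + k_2 + \tau$ and integrate; because $s \geq 2K$ on the entire domain, I can use the decay $|\widehat{\sigma^2}(s\hat x)|^2 \lesssim \langle s \rangle^{-N}$ for a suitable $N$ (uniformly in $\hat x$, since $\sigma^2$ is fixed and smooth enough after the Fourier transform of a compactly supported $L^\infty$ function — or more safely, I would use that $\widehat{\sigma^2}$ is bounded together with $\int_{|s|\geq 2K}|\widehat{\sigma^2}(s\hat x)|^2\dif{s} \to 0$) to extract a decaying factor. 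Choosing $K_0 > \tau$ ensures the shift by $\tau$ does not spoil the lower bound $s \geq 2K$, and keeps $K_0$ independent of $\hat x$.

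The main obstacle, and the one point requiring genuine care, is securing the uniformity in $\hat x$ of the one-dimensional bounds and the precise rate $K^{-1/2}$: one must verify that the constant $C$ in $\int_\R |\widehat{\sigma^2}(s\hat x)|^2\dif{s} \leq C$, as well as the tail decay used for the second term, can be taken independent of $\hat x \in \mathbb{S}^2$. I would handle this by noting that $\sigma^2$ is a fixed compactly supported $L^\infty$ function, so $\widehat{\sigma^2}$ is analytic and bounded with its behavior along each direction controlled uniformly; the worst-case direction still yields a finite bound because $\|\widehat{\sigma^2}\|_{L^\infty} \leq (2\pi)^{-3/2}\|\sigma^2\|_{L^1(D)}$ and the decay is dictated by the fixed support radius $\text{diam}\,D$. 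Since the claimed exponent $-1/2$ is weaker than what a direct $L^1$-in-$s$ argument gives, there is ample room, and I expect no difficulty in meeting the stated rate once the uniformity is in place.
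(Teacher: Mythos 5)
Your central framing claim is false: $\sigma^2$ is only assumed to lie in $L^\infty(D)$ with compact support, so $\widehat{\sigma^2}$ is \emph{not} a Schwartz function and need not decay rapidly. By Paley--Wiener it is entire of exponential type and bounded, but any pointwise decay rate requires smoothness of $\sigma^2$, which is not assumed; for instance, if $\sigma^2$ is the indicator of a ball, then $\widehat{\sigma^2}(\xi)$ decays only like $|\xi|^{-2}$ (and Riemann--Lebesgue gives decay with no rate in general). Consequently every step of your proposal that invokes ``rapid decay'' or the bound $|\widehat{\sigma^2}(s\hat x)|^2 \lesssim \langle s\rangle^{-N}$ --- in particular your primary argument for \eqref{eq:F0F0TermTwo-SchroEqu2018} --- is unjustified and would fail as written.

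Fortunately, the alternative justification you mention in passing is sound and carries the entire proof, once made precise. ``Plancherel along the ray'' requires the projection--slice identity: writing $y = r\hat x + y'$ with $y' \perp \hat x$, Fubini gives $\widehat{\sigma^2}(s\hat x) = (2\pi)^{-3/2}\int_\R e^{-isr}\, g_{\hat x}(r) \dif{r}$, where $g_{\hat x}(r) := \int_{\hat x^\perp} \sigma^2(r\hat x + y') \dif{y'}$; since $g_{\hat x}$ is bounded by $C_D \nrm[L^\infty(D)]{\sigma}^2$ and supported in $|r| \leq \sup_{y \in D}|y|$, one-dimensional Plancherel yields $\int_\R |\widehat{\sigma^2}(s\hat x)|^2 \dif{s} \leq C$ with $C$ depending only on $D$ and $\nrm[L^\infty(D)]{\sigma}$, uniformly in $\hat x \in \mathbb{S}^2$. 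With this, both \eqref{eq:F0F0TermOne-SchroEqu2018} and \eqref{eq:F0F0TermTwo-SchroEqu2018} follow from your change of variables (freeze $k_2$, substitute $s = k_1 - k_2$, respectively $s = k_1 + k_2 + \tau$, and majorize the inner integral by the full-line integral): each is $O(K^{-1})$ uniformly in $\tau \geq 0$ and $\hat x$, which is stronger than the claimed $K^{-1/2}$; for the second estimate you do not even need the tail to vanish, since boundedness of the full-line integral already suffices. This route is genuinely different from the paper's proof, which never uses Plancherel: the paper rescales to $\int_0^1 |\widehat{\sigma^2}(Ks\hat x)|^2 \dif{s}$, splits $D \times D$ into the set $E_\epsilon$ where $|\hat x \cdot (z-y)| < \epsilon$ and its complement, integrates the oscillatory factor explicitly off $E_\epsilon$ (gaining a factor $1/(K\epsilon)$), bounds $|E_\epsilon| \lesssim \epsilon$, and optimizes $\epsilon = K^{-1/2}$ --- an elementary argument that only yields the rate $K^{-1/2}$. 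So: your approach, repaired as above, is shorter and quantitatively sharper, but you must delete every appeal to Schwartz-class regularity or pointwise rapid decay of $\widehat{\sigma^2}$ and replace it by the projection--slice/Plancherel bound.
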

	\begin{proof}[Proof of Lemma \ref{lemma:LeadingTermTechnical-SchroEqu2018}]
		Note that for every $x \in \R^3$, we have
		\begin{equation*} 
		|\widehat{\sigma^2}(x)|^2 \simeq \big| \int_{\R^3} e^{-i x \cdot \xi} \sigma^2(\xi) \dif{\xi} \big|^2
		\leq \big( \int_{\R^3} |\sigma^2(\xi)| \dif{\xi} \big)^2
		\leq \nrm[L^\infty(D)]{\sigma}^4 \cdot |D|^2.
		\end{equation*}
		
		To conclude (\ref{eq:F0F0TermOne-SchroEqu2018}), we make a change of variable,
		\begin{equation*}
		\left\{\begin{aligned}
		s & = k_1 - k_2, \\
		t & = k_2.
		\end{aligned}\right.
		\end{equation*}
		Write \jz{$Q = \{(s,t) \in \R^2 \,\big|\, K \leq s+t \leq 2K,\, K \leq t \leq 2K \}$}. $Q$ is illustrated as in Figure \ref{fig:D-SchroEqu2018}.
		\begin{figure}[h]
			\centering
			\begin{tikzpicture}
			\draw[dashed,->] (-4.5,-1) -- (-3.5,-1) node[anchor=north west] {$s$};
			\draw[dashed,->] (-5,-0.5) -- (-5,0.5) node[anchor=south west] {$t$};
			\filldraw[opacity=0.5, thick] (0,-1) -- (2,-1) -- (0,1) -- (-2,1) -- (0,-1);
			\filldraw[black] (0,-1) node[anchor=north east] {$(0,K)$};
			\filldraw[black] (2,-1) node[anchor=north west] {$(K,K)$};
			\filldraw[black] (0, 1) node[anchor=south west] {$(0,2K)$};
			\filldraw[black] (-2,1) node[anchor=south east] {$(-K,2K)$};
			\end{tikzpicture}
			\caption{Illustration of $Q$} \label{fig:D-SchroEqu2018}
		\end{figure}
		
		Recall that $\supp \sigma \subseteq D$, so we have
		\begin{align}
		& \frac 1 {K^2} \int_K^{2K} \int_K^{2K} |\widehat{\sigma^2}((k_1 - k_2) \hat x)|^2 \dif{k_1} \dif{k_2} 
		= \frac 1 {K^2} \iint_Q \big| \widehat{\sigma^2}(s \hat x) \big|^2 \dif{s} \dif{t} 
		\nonumber\\
		= \ & \frac 1 {K^2} \int_{-K}^0 (K+s) |\widehat{\sigma^2}(s \hat x)|^2 \dif{s} + \frac 1 {K^2} \int_0^{K} (K-s) |\widehat{\sigma^2}(s \hat x)|^2 \dif{s} \nonumber\\
		\simeq \ & \int_0^1 \Big( \int_D e^{-iKs \hat x \cdot y} \sigma^2(y) \dif{y} \cdot \int_D e^{iKs \hat x \cdot z} \sigma^2(z) \dif{z} \Big) \dif{s} \nonumber\\
		= \ & \int_{(D \times D) \backslash E_\epsilon} \Big( \int_0^1 e^{iK(\hat x \cdot z - \hat x \cdot y)s} \dif{s} \Big) \sigma^2(y) \sigma^2(z) \dif{y} \dif{z} \nonumber\\
		& \quad + \int_{E_\epsilon} \Big( \int_0^1 e^{iK(\hat x \cdot z - \hat x \cdot y)s} \dif{s} \Big) \sigma^2(y) \sigma^2(z) \dif{y} \dif{z} \nonumber\\
		=: & A_1 + A_2, \label{eq:sigma2Inter-SchroEqu2018}
		\end{align}
		where $E_\epsilon := \{ (y,z) \in D \times D ; |\hat x \cdot z - \hat x \cdot y| < \epsilon \}$. We first estimate $A_1$,
		\begin{align}
		|A_1|
		& = \Big| \int_{(D \times D) \backslash E_\epsilon} \Big( \int_0^1 e^{iK(\hat x \cdot z - \hat x \cdot y)s} \dif{s} \Big) \sigma^2(y) \sigma^2(z) \dif{y} \dif{z} \Big| \nonumber\\
		& \leq \int_{(D \times D) \backslash E_\epsilon} \Big| \frac {e^{iK(\hat x \cdot z - \hat x \cdot y)} - 1} {iK(\hat x \cdot z - \hat x \cdot y)} \sigma^2(y) \sigma^2(z) \Big| \dif{y} \dif{z} \nonumber\\
		& \leq \frac 2 {K\epsilon} \nrm[L^\infty(D)]{\sigma}^4 \int_{D \times D} 1 \dif{y} \dif{z} = \frac {2|D|^2} {K\epsilon} \nrm[L^\infty(D)]{\sigma}^4. \label{eq:sigma2InterA1-SchroEqu2018}
		\end{align}
		\jz{Recall that $\text{diam}\,D < +\infty$ and that the problem setting is in $\R^3$. We can estimate $A_2$ as}
		\begin{align}
		|A_2|
		& \leq \nrm[L^\infty(D)]{\sigma}^4 \int_{E_\epsilon} 1 \dif{y} \dif{z} \nonumber\\
		& = \nrm[L^\infty(D)]{\sigma}^4 \int_D \big( \int_{y \in D \,,\, |\hat x \cdot z - \hat x \cdot y| < \epsilon} 1 \dif{y} \big) \dif{z} 
		\nonumber\\
		& \leq \nrm[L^\infty(D)]{\sigma}^4 \int_D 2\epsilon (\text{Diam}\,D)^2 \dif{z} \nonumber\\
		& \leq 2\nrm[L^\infty(D)]{\sigma}^4 (\text{Diam}\,D)^2 |D| \cdot \epsilon. \label{eq:sigma2InterA2-SchroEqu2018}
		\end{align}
		Set $\epsilon = K^{-1/2}$. By \eqref{eq:sigma2Inter-SchroEqu2018}-\eqref{eq:sigma2InterA2-SchroEqu2018}, we arrive at
		$$\frac 1 {K^2} \int\limits_K^{2K} \int\limits_K^{2K} |\widehat{\sigma^2}((k_1 - k_2) \hat x)|^2 \dif{k_1} \dif{k_2} \leq C K^{-1/2},$$
		for some constant $C$ independent of $\hat x$.

		\smallskip

		Now we prove \eqref{eq:F0F0TermTwo-SchroEqu2018}. Similarly, we make a change of variable:
		\begin{equation*} 
		\left\{\begin{aligned}
		s & = k_1 + k_2 + \tau, \\
		t & = k_2.
		\end{aligned}\right.
		\end{equation*}
		Write \jz{$Q' = \{(s,t) \in \R^2 \,\big|\, K \leq s-t-\tau \leq 2K,\, K \leq t \leq 2K \}$}. One can compute
		\begin{align*}
		& \frac 1 {K^2} \int_K^{2K} \int_K^{2K} | \widehat{\sigma^2}((k_1 + k_2 + \tau) \hat x) |^2 \dif{k_1} \dif{k_2} = \frac 1 {K^2} \iint_{Q'} | \widehat{\sigma^2}(s \hat x) |^2 \dif{s} \dif{s} \\
		= & \frac 1 {K^2} \int_{2K+\tau}^{3K+\tau} (s-2K-\tau) | \widehat{\sigma^2}(s \hat x) |^2 \dif{s} + \frac 1 {K^2} \int_{3K+\tau}^{4K+\tau} (4K+\tau-s) | \widehat{\sigma^2}(s \hat x) |^2 \dif{s} \\
		\leq & \frac 2 {K} \int_{2K-\tau}^{2K+\tau} | \widehat{\sigma^2}(s \hat x) |^2 \dif{s} = 2 \int_{2+\tau/K}^{4+\tau/K} | \widehat{\sigma^2}(Ks \hat x) |^2 \dif{s}.
		\end{align*}
		Thus when $K > \tau$,
		\begin{equation} \label{eq:sigma2InterTau-SchroEqu2018}
		\frac 1 {K^2} \int_K^{2K} \int_K^{2K} | \widehat{\sigma^2}((k_1 + k_2 + \tau) \hat x) |^2 \dif{k_1} \dif{k_2} \leq 2 \int_2^5 | \widehat{\sigma^2}(Ks \hat x) |^2 \dif{s}.
		\end{equation}
		Following the same manner as in \eqref{eq:sigma2Inter-SchroEqu2018}-\eqref{eq:sigma2InterA2-SchroEqu2018}, from \eqref{eq:sigma2InterTau-SchroEqu2018} we arrive at \eqref{eq:F0F0TermTwo-SchroEqu2018}. The proof is done.
	\end{proof}

	\subsection{Asymptotic estimates of higher order terms} \label{subsec:AEHigher-SchroEqu2018}
	The asymptotic estimates of the higher order terms are presented in Lemma \ref{lemma:HOT-SchroEqu2018}.
	
	\begin{lem} \label{lemma:HOT-SchroEqu2018}
		For every $\hat x_1$, $\hat x_2 \in \mathbb{S}^2$ and every $k_1$, $k_2 \geq k$, we have the following estimates ($j = 0,1$) as $k \to +\infty$,
		\begin{align}
		\big| \mathbb{E} \big( \overline{F_j(k_1,\hat x_1,\omega)} \cdot F_1(k_2,\hat x_2,\omega) \big) \big| & = \mathcal{O}(k^{-1}), \label{eq:hotFjF1-SchroEqu2018}\\
		\big| \mathbb{E} \big( F_j(k_1,\hat x_1,\omega) \cdot F_1(k_2,\hat x_2,\omega) \big) \big| & = \mathcal{O}(k^{-1}). \label{eq:hotFjF1Conju-SchroEqu2018}
		\end{align}
	\end{lem}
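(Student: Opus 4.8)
The plan is to exploit the fact that $F_0$ and $F_1$ are (complex) linear functionals of the single Gaussian white noise $\dot B$, so that every covariance appearing in \eqref{eq:hotFjF1-SchroEqu2018}--\eqref{eq:hotFjF1Conju-SchroEqu2018} can be computed exactly through the It\^o isometry \eqref{eq:ItoIso-SchroEqu2018}, and then to read off the decay rate from the operator bound of Lemma \ref{lemma:RkVBounded-SchroEqu2018}. First I would rewrite each $F_j$ as a pairing of $\dot B$ against an explicit deterministic test function. Writing $e_{k,\hat x}(y) := e^{-ik\hat x\cdot y}$ and using the bilinear pairing $(f,g) := \int_D fg\dif{y}$, the definitions \eqref{eq:Fjkx-SchroEqu2018} and \eqref{eq:RkSigmaBDefn-SchroEqu2018} give $F_0(k,\hat x) = \agl[\dot B, \sigma e_{k,\hat x}]$, while for $F_1$ I would move the operators onto the test side by taking transposes. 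Since $\Rk$ has the symmetric kernel $\Phi_k(x,y)=\Phi_k(y,x)$ and $V$ is a multiplication operator, the transpose of $V\Rk$ with respect to $(\cdot,\cdot)$ is $\Rk V$, hence $((V\Rk)^j)^T=(\Rk V)^j$ and therefore $F_1(k,\hat x)=\agl[\dot B,\ \sigma\sum_{j\geq1}(\Rk V)^j e_{k,\hat x}]$.

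Next, applying \eqref{eq:ItoIso-SchroEqu2018} in its bilinear form $\mathbb{E}(\agl[\dot B,a]\agl[\dot B,b])=\int_D ab\dif{y}$, together with $\overline{\agl[\dot B,a]}=\agl[\dot B,\overline a]$, turns each of the four covariances into an $L^2(D)$ integral of $\sigma^2$ (from the two $\sigma$ factors) times plane waves times the Neumann tails $\sum_{j\geq1}(\Rk V)^j e_{k_i,\hat x_i}$. I would then bound these integrals by Cauchy--Schwarz, using $\nrm[L^2(D)]{e_{k,\hat x}}=|D|^{1/2}$, and control each tail by $\nrm[L^2(D)]{\sum_{j\geq1}(\Rk V)^j e_{k,\hat x}}\leq\sum_{j\geq1}\nrm[\mathcal{L}(L^2(D),L^2(D))]{\Rk V}^j\,|D|^{1/2}$. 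For $k$ beyond the threshold $k^*$ (enlarged if necessary so that $C_{D,V}k^{-1}<1$), the bound \eqref{eq:RkVbdd-SchroEqu2018} gives $\nrm[\mathcal{L}(L^2(D),L^2(D))]{\Rk V}\leq C_{D,V}k^{-1}$, so the geometric series sums to $C_{D,V}k^{-1}(1-C_{D,V}k^{-1})^{-1}=\mathcal{O}(k^{-1})$. Because every occurrence of $F_1$ contributes at least one factor $\Rk V$, each covariance carries at least one such $\mathcal{O}(k^{-1})$ factor (the covariances with two copies of $F_1$ are in fact $\mathcal{O}(k^{-2})$). Since $k_1,k_2\geq k$ and $|e_{k,\hat x}|\equiv1$, all constants are independent of $\hat x_1,\hat x_2$, which yields \eqref{eq:hotFjF1-SchroEqu2018}. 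The conjugate-free estimate \eqref{eq:hotFjF1Conju-SchroEqu2018} is handled identically: dropping the conjugation merely replaces $\overline{e_{k,\hat x}}$ by $e_{k,\hat x}$, leaving all moduli unchanged (equivalently, one may invoke $\overline{F_j(k,\hat x)}=F_j(-k,\hat x)$).

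The main obstacle I anticipate is the rigorous bookkeeping of the white-noise functionals rather than any deep analytic estimate: I must justify that $\Rk(\sigma\dot B)$, defined through the pairing \eqref{eq:RkSigmaBDefn-SchroEqu2018}, really does transpose as claimed, and that interchanging the infinite Neumann sum with the expectation and the spatial integration is legitimate. I would settle this by working first with the finite partial sums $\sum_{j=1}^{N}(\Rk V)^j$, for which the It\^o isometry and Fubini's theorem apply without difficulty, and then passing to the limit $N\to\infty$ using the absolute convergence in $\mathcal{L}(L^2(D),L^2(D))$ guaranteed by $k>k^*$; the uniform-in-$N$ bound established above permits taking the limit inside.
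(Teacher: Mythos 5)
Your proposal is correct, and it is organized genuinely differently from the paper's proof, so a comparison is worthwhile. Both arguments rest on the same two pillars, the It\^o isometry \eqref{eq:ItoIso-SchroEqu2018} and the bound $\nrm[\mathcal{L}(L^2(D),L^2(D))]{\Rk V}\lesssim k^{-1}$ from Lemma \ref{lemma:RkVBounded-SchroEqu2018}, but they deploy them differently. For the covariances involving $F_0$ the difference is only cosmetic: the paper pushes the expectation through the deterministic operators and lands on the identity \eqref{eq:hotF0Fj-SchroEqu2018}, namely $\int_D e^{-ik_2\hat x_2\cdot z}(V\mathcal{R}_{k_2})^j\big(e^{ik_1\hat x_1\cdot(\cdot)}\sigma^2\big)\dif{z}$, which is precisely the transpose of your expression $\int_D\sigma^2\,\overline{e_{k_1,\hat x_1}}\,(\mathcal{R}_{k_2}V)^j e_{k_2,\hat x_2}\dif{y}$, and both are then estimated the same way. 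The genuine divergence is in $\mathbb{E}(\overline{F_1}F_1)$: the paper splits $F_1=G_1+r_2$ (first Neumann term plus tail), computes $\mathbb{E}(\overline{G_1}G_1)$ exactly by the isometry, and controls the tail terms crudely in $L^2(\Omega)$ via the moment bound $\mathbb{E}\nrm[L^2(D)]{\Rk(\sigma\dot B)}^2\leq C_D$ from \eqref{eq:Rksigma2Bounded-SchroEqu2018}, which yields only $\mathcal{O}(k^{-1})$ for the cross terms; you instead dualize the entire Neumann tail onto the deterministic test-function side, so that a single application of the isometry plus Cauchy--Schwarz gives $\mathcal{O}(k_1^{-1}k_2^{-1})$ for the whole term. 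Your route is thus more unified (the $j=0$ and $j=1$ cases are handled identically), dispenses with Lemma \ref{lemma:RkSigmaB-SchroEqu2018} for this estimate, and delivers a sharper $\mathcal{O}(k^{-2})$ rate for the $F_1$--$F_1$ covariance, although the lemma as stated needs only $\mathcal{O}(k^{-1})$. The price is the transposition identity $\int_D\varphi\,\Rk(\sigma\dot B)\dif{x}=\agl[\dot B,\sigma\Rk\varphi]$, which you correctly single out as the step requiring justification; your plan (finite partial sums, Fubini and the isometry, then passage to the limit using operator-norm convergence of the Neumann series for large $k$) settles it, and involves no interchange heavier than those the paper itself performs silently in \eqref{eq:hotF0FjInter-SchroEqu2018}.
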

	\begin{proof}[Proof of Lemma \ref{lemma:HOT-SchroEqu2018}]
		The proof of formulas \eqref{eq:hotFjF1Conju-SchroEqu2018} is similar to that of \eqref{eq:hotFjF1-SchroEqu2018}, so we only present the proof of \eqref{eq:hotFjF1-SchroEqu2018}. In this proof, we may drop the arguments $k$, $\hat x$ or $\omega$ from $F_j$ if it is clear in the context. For the notational convenience, we write
		\begin{align*}
		G_j(k,\hat x,\omega) & := \int_D e^{-ik \hat x \cdot y} (V \Rk)^j (\sigma \dot{B}_y) \dif{y},
		\\
		r_j(k,\hat x,\omega) & := \sum_{s \geq j} G_s(k,\hat x,\omega), 
		\end{align*}
		for $j = 0,1,\cdots$. To prove \eqref{eq:hotFjF1-SchroEqu2018} for the case where $j = 0$, we first show that
		\begin{equation} \label{eq:hotF0Fj-SchroEqu2018}
		\mathbb{E} \big( \overline{G_0(k_1,\hat x_1,\omega)} \cdot G_j(k_2,\hat x_2,\omega) \big) = \int_D e^{-ik_2 \hat x_2 \cdot z} (V \mathcal{R}_{k_2})^j \big( e^{ik_1 \hat x_1 \cdot (\cdot)} \sigma^2 \big) \dif{z}, \quad j \geq 1.
		\end{equation}
		This can be seen from the following computation
		\begin{align}
		& \ \mathbb{E} \big( \overline{G_0(k_1,\hat x_1,\omega)} \cdot G_j(k_2,\hat x_2,\omega) \big) \nonumber\\
		= & \ \mathbb{E} \big( \int_D e^{ik_1 \hat x_1 \cdot y} \sigma(y) \dif{B_y} \cdot \int_D \big[ e^{-ik_2 \hat x_2 \cdot z} (V \mathcal{R}_{k_2})^{j-1} ( V(\cdot) \int_{D_s} \Phi(\cdot,s) \sigma(s) \dif{B_s} ) \big] \dif{z} \big) \nonumber\\
		= & \int_D e^{-ik_2 \hat x_2 \cdot z} (V \mathcal{R}_{k_2})^{j-1} \Big\{ V(\cdot) \,\mathbb{E} \big[ \int_{D_y} e^{ik_1 \hat x_1 \cdot y} \sigma(y) \dif{B_y} \cdot \int_{D_s} \Phi(\cdot,s) \sigma(s) \dif{B_s} \big] \Big\} \dif{z} \nonumber\\
		= & \int_D e^{-ik_2 \hat x_2 \cdot z} (V \mathcal{R}_{k_2})^{j-1} \big( V(\cdot) \mathcal{R}_{k_2}(e^{ik_1 \hat x_1 \cdot (\cdot)} \sigma^2) \big) \dif{z} \nonumber\\
		= & \int_D e^{-ik_2 \hat x_2 \cdot z} (V \mathcal{R}_{k_2})^j ( e^{ik_1 \hat x_1 \cdot (\cdot)} \sigma^2 ) \dif{z}. \label{eq:hotF0FjInter-SchroEqu2018}
		\end{align}
		From \eqref{eq:hotF0FjInter-SchroEqu2018}, equality \eqref{eq:hotF0Fj-SchroEqu2018} is proved. Using \eqref{eq:hotF0Fj-SchroEqu2018} and Lemma \ref{lemma:RkVBounded-SchroEqu2018}, we have
		\begin{align*}
		& \ \big| \mathbb{E} \big( \overline{F_0(k_1,\hat x_1,\omega)} \cdot F_1(k_2,\hat x_2,\omega) \big) \big| \nonumber\\
		\leq & \ \sum_{j \geq 1} \big| \mathbb{E} \big( G_0(k_1,\hat x_1,\omega) \cdot \overline{G_j(k_2,\hat x_2,\omega)} \big) \big| \nonumber\\
		= & \ \sum_{j \geq 1} \Big| \int_D e^{-ik_2 \hat x_2 \cdot z} (V \mathcal{R}_{k_2})^j \big( e^{ik_1 \hat x_1 \cdot (\cdot)} \sigma^2 \big) \dif{z} \Big| \nonumber\\
		\leq & \ |D|^{1/2} \cdot \sum_{j \geq 1} \nrm[L^2(D)]{ (V \mathcal{R}_{k_2})^j \big( e^{ik_1 \hat x_1 \cdot (\cdot)} \sigma^2 \big) } \nonumber\\
		\leq & \ C |D|^{1/2} \cdot \sum_{j \geq 1} k_2^{-j} \nrm[L^2(D)]{e^{ik_1 \hat x_1 \cdot (\cdot)} \sigma^2} = \mathcal{O}(k_2^{-1}), \quad k \to +\infty. 
		\end{align*}

		\medskip

		\sq{To prove \eqref{eq:hotFjF1-SchroEqu2018} for the case where $j = 1$, we split $\mathbb{E} (\overline{F_1} F_1)$ into four terms,
		\begin{equation} \label{eq:FGr-SchroEqu2018}
		\mathbb{E} (\overline{F_1} F_1) = \mathbb{E} (\overline{G_1} G_1) + \mathbb{E} (\overline{r_1} r_2) - \mathbb{E} (\overline{r_2} r_2) + \mathbb{E} (\overline{r_2} r_1).
		\end{equation}
		We estimate these four terms on the right-hand-side of \eqref{eq:FGr-SchroEqu2018} one by one.}
		First, we estimate
		\begin{align}
		& \ \big| \mathbb{E} \big( \overline{G_1(k_1,\hat x_1,\omega)} \cdot G_1(k_2,\hat x_2,\omega) \big) \big| \nonumber\\
		= & \ \Big| \iint_{D_y \times D_z} e^{-ik_1 \hat x_1 \cdot y} e^{ik_2 \hat x_2 \cdot z} V(y) \overline V(z) \cdot \mathbb{E} \big[ \int_{D_s} \Phi(y,s) \sigma(s) \dif{B_s} \cdot \int_{D_t} \overline \Phi(z,t) \sigma(t) \dif{B_t} \big] \dif{y} \dif{z} \Big| \nonumber\\
		= & \ \Big| \iint_{D_y \times D_z} e^{-ik_1 \hat x_1 \cdot y} e^{ik_2 \hat x_2 \cdot z} V(y) \overline V(z) \cdot \big[ \int_{D_s} \Phi(y,s) \sigma(s) \overline \Phi(z,s) \sigma(s) \dif{s} \big] \dif{y} \dif{z} \Big| \nonumber\\
		= & \ \Big| \int_{D} \sigma^2(s) \cdot \mathcal{R}_{k_1} V( e^{-ik_1 \hat x_1 \cdot (\cdot)} )(s) \cdot \overline{ \mathcal{R}_{k_2} V ( e^{-ik_2 \hat x_2 \cdot (\cdot)} )(s) } \dif{s} \Big| \nonumber\\
		\leq & \ C k_1^{-1} k_2^{-1} \nrm[L^\infty(D)]{\sigma}^2 \quad\big( \text{Lemma \ref{lemma:RkVBounded-SchroEqu2018}} \big) \nonumber\\
		= & \ \mathcal{O}(k_1^{-1} k_2^{-1}), \quad k \to +\infty. \label{eq:hotG1G1-SchroEqu2018}
		\end{align}
		Then we estimate
		\begin{align}
		& \ \big| \mathbb{E} \big( \overline{r_1(k_1,\hat x_1,\omega)} \cdot r_2(k_2,\hat x_2,\omega) \big) \big|
		\leq \mathbb{E} \Big( \sum_{j \geq 1} \big| G_j(k_1,\hat x_1,\omega) \big| \times \sum_{\ell \geq 2} \big| G_\ell(k_2,\hat x_2,\omega) \big| \Big) \nonumber\\
		= & \ \mathbb{E} \Big( \sum_{j \geq 1} \big| \int_D e^{-ik_1 \hat x_1 \cdot y} (V \mathcal{R}_{k_1})^j (\sigma \dot{B}_y) \dif{y} \big| \times \sum_{\ell \geq 2} \big| \int_D e^{-ik_2 \hat x_2 \cdot z} (V \mathcal{R}_{k_2})^\ell (\sigma \dot{B}_z) \dif{z} \big| \Big) \nonumber\\
		= & \ \nrm[L^\infty(D)]{V}^2 |D| \cdot \mathbb{E} \Big( \sum_{j \geq 0} \nrm[L^2(D)]{(\mathcal{R}_{k_1} V)^j [\mathcal{R}_{k_1}(\sigma \dot{B})]} \times \sum_{\ell \geq 1} \nrm[L^2(D)]{(\mathcal{R}_{k_2} V)^\ell [\mathcal{R}_{k_2}(\sigma \dot{B})]} \Big) \nonumber\\
		\leq & \ C \nrm[L^\infty(D)]{V}^2 |D| \cdot \mathbb{E} \Big( \sum_{j \geq 0} \big( k_1^{-j} \nrm[L^2(D)]{\mathcal{R}_{k_1}(\sigma \dot{B})} \big) \times \sum_{\ell \geq 1} \big( k_2^{-\ell} \nrm[L^2(D)]{\mathcal{R}_{k_2}(\sigma \dot{B})} \big) \Big) \nonumber\\
		\leq & \ \nrm[L^\infty(D)]{V}^2 |D| \cdot \frac {k_1} {k_1-1} \cdot \frac 1 {k_2-1} \cdot \frac 1 2 \mathbb{E} \big( \nrm[L^2(D)]{\mathcal{R}_{k_1}(\sigma \dot{B})}^2 + \nrm[L^2(D)]{\mathcal{R}_{k_2}(\sigma \dot{B})}^2 \big). \label{eq:hotr1r2Inter1-SchroEqu2018}
		\end{align}
		Utilizing \eqref{eq:Rksigma2Bounded-SchroEqu2018}, we obtain
		\begin{equation}
		\mathbb{E} \big( \nrm[L^2(D)] {\Rk(\sigma \dot{B})}^2 \big)
		\leq C \mathbb{E} \big( \nrm[L_{-1/2-\epsilon}^2] {\Rk(\sigma \dot{B})}^2 \big) \leq C_D < +\infty. \label{eq:hotr1r2Inter2-SchroEqu2018}
		\end{equation}
		From (\ref{eq:hotr1r2Inter1-SchroEqu2018})-(\ref{eq:hotr1r2Inter2-SchroEqu2018}) we arrive at
		\begin{equation} \label{eq:hotr1r2-SchroEqu2018}
		\big| \mathbb{E} \big( \overline{r_1(k_1,\hat x_1,\omega)} \cdot r_2(k_2,\hat x_2,\omega)\big) \big| \leq \mathcal{O}(k_2^{-1}), \quad k \to +\infty.
		\end{equation}
		Mimicking (\ref{eq:hotr1r2Inter1-SchroEqu2018})-(\ref{eq:hotr1r2Inter2-SchroEqu2018}), one can obtain
		\begin{equation} \label{eq:hotr2r1-SchroEqu2018}
		\big| \mathbb{E} \big( \overline{r_2(k_1,\hat x_1,\omega)} \cdot r_1(k_2,\hat x_2,\omega) \big) \big| \leq \mathcal{O}(k_1^{-1}), \quad k \to +\infty.
		\end{equation}
		By modify $\sum_{j \geq 0} k_1^{-j}$ to $\sum_{j \geq 1} k_1^{-j}$ in (\ref{eq:hotr1r2Inter1-SchroEqu2018}), one can conclude
		\begin{equation} \label{eq:hotr2r2-SchroEqu2018}
		\big| \mathbb{E} \big( \overline{r_2(k_1,\hat x_1,\omega)} \cdot r_2(k_2,\hat x_2,\omega) \big) \big| \leq \mathcal{O}(k_1^{-1}k_2^{-1}), \quad k \to +\infty.
		\end{equation}
		Combining \eqref{eq:FGr-SchroEqu2018}-\eqref{eq:hotG1G1-SchroEqu2018} and \eqref{eq:hotr1r2-SchroEqu2018}-\eqref{eq:hotr2r2-SchroEqu2018}, we arrive at \eqref{eq:hotFjF1-SchroEqu2018} for the case where $j = 1$. The proof is complete.
	\end{proof}

	\medskip

	Lemma \ref{lemma:HOTErgo-SchroEqu2018} is the ergodic version of Lemma \ref{lemma:HOT-SchroEqu2018}.
	\begin{lem} \label{lemma:HOTErgo-SchroEqu2018}
		Write
		\begin{align*}
		X_{p,q}(K,\tau,\hat x,\omega) & = \frac 1 K \int_K^{2K} \overline{F_p(k,\hat x,\omega)} \cdot F_q(k+\tau,\hat x,\omega) \dif{k}, \ \ \text{for} \ \ (p,q) \in \{ (0,1), (1,0), (1,1) \}.
		\end{align*}
		Then for any $\hat x \in \mathbb{S}^2$ and any $\tau \geq 0$, we have the following estimates as $K \to +\infty$,
		\begin{align} 
		\big| \mathbb{E} (X_{p,q}(K,\tau,\hat x,\omega)) \big| & = \mathcal{O}(K^{-1}), \ \mathbb{E} (|X_{p,q}(K,\tau,\hat x,\omega)|^2) = \mathcal{O}(K^{-5/4}), 
		\label{eq:hotF0F1Ergo-SchroEqu2018} \\
		\big| \mathbb{E} (X_{1,1}(K,\tau,\hat x,\omega)) \big| & = \mathcal{O}(K^{-1}), \ \mathbb{E} (|X_{1,1}(K,\tau,\hat x,\omega)|^2) = \mathcal{O}(K^{-2}), \label{eq:hotF1F1Ergo-SchroEqu2018}
		\end{align}
		for $(p,q) \in \{ (0,1), (1,0) \}$. Let $\{K_j\} \in P(4/5+\gamma)$. Then for any $\tau \geq 0$, we have
		\begin{equation} \label{eq:HOTErgoToZero-SchroEqu2018}
		\lim_{j \to +\infty} X_{p,q}(K_j,\tau,\hat x,\omega) = 0 \quad \ass,
		\end{equation}
		for every $(p,q) \in \{ (0,1), (1,0), (1,1) \}$.
	\end{lem}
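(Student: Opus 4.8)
The plan is to reduce everything to the pairwise second-moment bounds of Lemma \ref{lemma:HOT-SchroEqu2018} by exploiting that each $F_p(k,\hat x,\omega)$ is a zero-mean complex Gaussian random variable, being a continuous linear functional of the white noise $\dot B$, and that the whole family $\{F_p(k,\hat x,\omega)\}$ is jointly Gaussian with $\overline{F_p(k,\hat x,\omega)} = F_p(-k,\hat x,\omega)$, exactly as in the proof of Lemma \ref{lemma:LeadingTermErgo-SchroEqu2018}. For the expectation estimates, Fubini's theorem gives
\[
\mathbb{E}(X_{p,q}(K,\tau,\hat x,\omega)) = \frac 1 K\int_K^{2K}\mathbb{E}\big(\overline{F_p(k,\hat x,\omega)}F_q(k+\tau,\hat x,\omega)\big)\dif{k},
\]
and since the integrand is $\mathcal{O}(k^{-1}) = \mathcal{O}(K^{-1})$ uniformly for $k\in[K,2K]$ by \eqref{eq:hotFjF1-SchroEqu2018}, we obtain $|\mathbb{E}(X_{p,q})| = \mathcal{O}(K^{-1})$ for all three index pairs simultaneously.

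For the mean-square estimates I would write
\[
\mathbb{E}(|X_{p,q}|^2) = \frac 1{K^2}\int_K^{2K}\int_K^{2K}\mathbb{E}\big(\overline{F_p(k_1)}F_q(k_1+\tau)F_p(k_2)\overline{F_q(k_2+\tau)}\big)\dif{k_1}\dif{k_2},
\]
and expand the fourth moment with Isserlis' Theorem (Lemma \ref{lem:IsserlisThm-SchroEqu2018}) into its three products of pairwise moments. For $(p,q)=(1,1)$ every factor is an $F_1$ paired with some $F$, so each of the three products is $\mathcal{O}(k^{-2})$ uniformly by \eqref{eq:hotFjF1-SchroEqu2018}--\eqref{eq:hotFjF1Conju-SchroEqu2018} (using $\overline{F_1(k)}=F_1(-k)$ to recast the unconjugated pairs), and integrating gives $\mathbb{E}(|X_{1,1}|^2)=\mathcal{O}(K^{-2})$. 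The delicate case is $(p,q)=(0,1)$ (and symmetrically $(1,0)$): two of the three Isserlis products involve only $F_0$--$F_1$ pairings and are $\mathcal{O}(k^{-2})$, but the remaining product factors as $\mathbb{E}(\overline{F_0(k_1)}F_0(k_2))\cdot\mathbb{E}(F_1(k_1+\tau)\overline{F_1(k_2+\tau)})$, whose first factor equals $(2\pi)^{3/2}\widehat{\sigma^2}((k_2-k_1)\hat x)$ and does \emph{not} decay in $k$. This is the main obstacle: the decay of this term must be recovered from the integration over $[K,2K]^2$ rather than pointwise.

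To extract that decay I would apply the Cauchy--Schwarz inequality to this term over the square $[K,2K]^2$ with the normalized measure $K^{-2}\dif{k_1}\dif{k_2}$: the $\widehat{\sigma^2}$ factor contributes $\big(\tfrac 1{K^2}\iint|\widehat{\sigma^2}((k_2-k_1)\hat x)|^2\big)^{1/2}=\mathcal{O}(K^{-1/4})$ directly from \eqref{eq:F0F0TermOne-SchroEqu2018} of Lemma \ref{lemma:LeadingTermTechnical-SchroEqu2018}, while the $F_1$--$F_1$ factor is bounded uniformly by $\mathcal{O}(K^{-1})$ via \eqref{eq:hotFjF1-SchroEqu2018}; their product is $\mathcal{O}(K^{-5/4})$, which dominates the two $\mathcal{O}(K^{-2})$ contributions and yields $\mathbb{E}(|X_{0,1}|^2)=\mathcal{O}(K^{-5/4})$.

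Finally, for the almost sure statement I would invoke Lemma \ref{lem:asconverg-SchroEqu2018}: by Chebyshev's inequality $P(|X_{p,q}(K_j)|\geq\epsilon)\leq\epsilon^{-2}\mathbb{E}(|X_{p,q}(K_j)|^2)$, and with $\{K_j\}\in P(4/5+\gamma)$ one has $K_j^{-5/4}\leq Cj^{-(4/5+\gamma)(5/4)}=Cj^{-1-5\gamma/4}$, so that $\sum_{j\geq K_0}\mathbb{E}(|X_{p,q}(K_j)|^2)$ is the tail of a convergent series and tends to $0$ as $K_0\to\infty$; the $(1,1)$ case is handled even more easily since $K_j^{-2}\leq K_j^{-5/4}$. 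Hence $X_{p,q}(K_j,\tau,\hat x,\omega)\to0$ almost surely for every $(p,q)\in\{(0,1),(1,0),(1,1)\}$, and the exponent $4/5$ in $P(4/5+\gamma)$ is seen to be precisely the threshold that renders the second-moment decay $K^{-5/4}$ summable along the subsequence.
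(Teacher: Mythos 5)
Your proposal is correct and follows essentially the same route as the paper's own proof: Fubini plus Lemma \ref{lemma:HOT-SchroEqu2018} for the first moments, an Isserlis expansion of the fourth moment in which the non-decaying factor $\mathbb{E}\big(F_0(k_1)\overline{F_0(k_2)}\big)=(2\pi)^{3/2}\widehat{\sigma^2}((k_1-k_2)\hat x)$ is controlled by Cauchy--Schwarz together with \eqref{eq:F0F0TermOne-SchroEqu2018} to yield $\mathcal{O}(K^{-1/4})\cdot\mathcal{O}(K^{-1})=\mathcal{O}(K^{-5/4})$, and finally Chebyshev with Lemma \ref{lem:asconverg-SchroEqu2018} for the almost sure limits. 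The only cosmetic difference is that you spell out the Isserlis pairings in the $(1,1)$ case and the conjugation symmetry behind the $(1,0)$ case, which the paper abbreviates.
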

	
	We may denote $X_{p,q}(K,\tau,\hat x,\omega)$ as $X_{p,q}$ for short if it is clear in the context.
	
	\begin{proof}[Proof of Lemma \ref{lemma:HOTErgo-SchroEqu2018}] According to Lemma \ref{lemma:HOT-SchroEqu2018}, we have
		\begin{align} 
		\mathbb{E} \big( X_{0,1} \big) 
		& = \frac 1 K \int_K^{2K} \mathbb{E} \big( \overline{F_0(k,\hat x,\omega)} \cdot F_1(k+\tau,\hat x,\omega) \big) \dif{k} \nonumber\\
		& = \mathcal{O}(K^{-1}), \quad K \to +\infty.  \label{eq:hotF0F1Ergo1-SchroEqu2018}
		\end{align}
		By (\ref{eq:I0-SchroEqu2018}), Isserlis' Theorem and Lemma \ref{lemma:LeadingTermTechnical-SchroEqu2018}, we compute the secondary moment of $X_{0,1}$ as
		\begin{align}
		& \ \mathbb{E} \big( | X_{0,1} |^2 \big) \nonumber\\
		= & \ \frac 1 {K^2} \int_K^{2K} \int_K^{2K} \mathbb{E} \big( F_0(k_1,\hat x,\omega) \overline{F_1(k_1+\tau,\hat x,\omega)} \big) \cdot \mathbb{E} \big( \overline{F_0(k_2,\hat x,\omega)} F_1(k_2+\tau,\hat x,\omega) \big) \nonumber\\
		& \ + \mathbb{E} \big( F_0(k_1,\hat x,\omega) \overline{F_0(k_2,\hat x,\omega)} \big) \cdot \mathbb{E} \big( \overline{F_1(k_1+\tau,\hat x,\omega)} F_1(k_2+\tau,\hat x,\omega) \big) \nonumber\\
		& \ + \mathbb{E} \big( F_0(k_1,\hat x,\omega) F_1(k_2+\tau,\hat x,\omega) \big) \cdot \mathbb{E} \big( \overline{F_1(k_1+\tau,\hat x,\omega)} \, \overline{F_0(k_2,\hat x,\omega)} \big) \dif{k_1} \dif{k_2} \nonumber\\
		= & \ \frac 1 {K^2} \int_K^{2K} \int_K^{2K} \mathcal{O}(K^{-2}) + (2\pi)^{3/2} \widehat{\sigma^2} ((k_1-k_2) \hat x) \cdot \mathcal{O}(K^{-1}) + \mathcal{O}(K^{-2}) \dif{k_1} \dif{k_2} \nonumber\\
		= & \ \mathcal{O}(K^{-1/4}) \cdot \mathcal{O}(K^{-1}) + \mathcal{O}(K^{-2}) \quad(\text{H\"older ineq. and Lemma } \ref{lemma:LeadingTermTechnical-SchroEqu2018}) \nonumber\\
		= & \ \mathcal{O}(K^{-5/4}), \quad K \to +\infty. \label{eq:hotF0F1Ergo2-SchroEqu2018}
		\end{align}
		From \eqref{eq:hotF0F1Ergo1-SchroEqu2018}-\eqref{eq:hotF0F1Ergo2-SchroEqu2018} we obtain \eqref{eq:hotF0F1Ergo-SchroEqu2018} for the case where $(p,q) = (0,1)$. 
		Using similar arguments, formula \eqref{eq:hotF0F1Ergo-SchroEqu2018} for $(p,q) = (1,0)$ can be proved and we skip the details.
		
		By Chebyshev's inequality and (\ref{eq:hotF0F1Ergo2-SchroEqu2018}), for any $\epsilon > 0$, we have
		\begin{align}
		\qquad & \ P \big( \bigcup_{j \geq K_0} \{ |X_{0,1}(K_j, \tau, \hat x,\omega) - 0| \geq \epsilon \} \big) \leq \frac C {\epsilon^2} \sum_{j \geq K_0} K_j^{-5/4} \leq \frac C {\epsilon^2} \sum_{j \geq K_0} j^{-1-5\gamma/4} \nonumber\\
		\leq & \ \frac C {\epsilon^2} \int_{K_0}^{+\infty} (t-1)^{-1-5\gamma/4} \dif{t} = \frac C {\epsilon^2 \gamma} (K_0-1)^{-5\gamma/4} \to 0, \quad K_0 \to +\infty. \label{eq:X01Ergo-SchroEqu2018}
		\end{align}
		According to Lemma \ref{lem:asconverg-SchroEqu2018}, inequality \eqref{eq:X01Ergo-SchroEqu2018} implies \eqref{eq:HOTErgoToZero-SchroEqu2018} for the case where $(p,q) = (0,1)$. Similarly, formula \eqref{eq:HOTErgoToZero-SchroEqu2018} can be proved for the case where $(p,q) = (1,0)$.

		\medskip

		We now prove \eqref{eq:hotF1F1Ergo-SchroEqu2018}. We have
		\begin{align}
		\mathbb{E} \big( X_{1,1} \big)
		& = \frac 1 K \int_K^{2K} \mathbb{E} \big( \overline{F_1(k,\hat x,\omega)} \cdot F_1(k+\tau,\hat x,\omega) \big) \dif{k}	
		= \mathcal{O}(K^{-1}). \label{eq:hotF1F1Ergo1-SchroEqu2018}
		\end{align}
		Similar to \eqref{eq:hotF0F1Ergo2-SchroEqu2018}, we compute the secondary moment of $X_{1,1}$ as
		\begin{align}
		& \ \mathbb{E} \big( | X_{1,1} |^2 \big) \nonumber\\
		= & \ \mathbb{E} \big( \frac 1 K \int_K^{2K} F_1(k_1,\hat x,\omega) \cdot \overline{F_1(k_1+\tau,\hat x,\omega)} \dif{k_1} \cdot \frac 1 K \int_K^{2K} \overline{ F_1(k_2,\hat x,\omega) } \cdot F_1(k_2+\tau,\hat x,\omega) \dif{k_2} \big) \nonumber\\
		= & \ \frac 1 {K^2} \int_K^{2K} \int_K^{2K} \mathcal{O}(K^{-1}) \cdot \mathcal{O}(K^{-1}) \dif{k_1} \dif{k_2} \quad (\text{Lemma } \ref{lemma:HOT-SchroEqu2018}) \nonumber\\
		= & \ \mathcal{O}(K^{-2}), \quad K \to +\infty. \label{eq:hotF1F1Ergo2-SchroEqu2018}
		\end{align}
		Formulae \eqref{eq:hotF1F1Ergo1-SchroEqu2018} and \eqref{eq:hotF1F1Ergo2-SchroEqu2018} give \eqref{eq:hotF1F1Ergo-SchroEqu2018}.
		
		By Chebyshev's inequality and \eqref{eq:hotF1F1Ergo2-SchroEqu2018}, for any $\epsilon > 0$, we have
		\begin{align}
		& P \big( \bigcup_{j \geq K_0} \{ |X_{1,1} - 0| \geq \epsilon \} \big) \leq \frac C {\epsilon^2} \sum_{j \geq K_0} K_j^{-2} \leq \frac C {\epsilon^2} \sum_{j \geq K_0} j^{-8/5-2\gamma} \nonumber\\
		\leq & \frac C {\epsilon^2} \int_{K_0}^{+\infty} (t-1)^{-8/5-2\gamma} \dif{t} = \frac {C (K_0-1)^{-3/5-2\gamma}} {\epsilon^2 (3+10\gamma)}  \to 0, \quad K_0 \to +\infty.  \label{eq:X11Ergo-SchroEqu2018}
		\end{align}
		Lemma \ref{lem:asconverg-SchroEqu2018} together with \eqref{eq:X11Ergo-SchroEqu2018} implies \eqref{eq:HOTErgoToZero-SchroEqu2018} for the case that $(p,q) = (1,1)$. The proof is thus completed.
	\end{proof}

	\section{The recovery of the variance function} \label{sec:RecVar-SchroEqu2018}
	
	In this section we focus on the recovery of the variance function. We employ only a single passive scattering measurement. 
	Namely, there is no incident plane wave sent and the random sample $\omega$ is fixed. Throughout this section, $\alpha$ is set to be 0. 
	The data set $\mathcal M_1$ is utilized to achieve the unique recovery result. 
	We present the main results of recovering the variance function in Section \ref{subsec:MainSteps-SchroEqu2018}, and put the corresponding proofs in Section \ref{subsec:ProofsToMainSteps-SchroEqu2018}.
	
	\subsection{Main unique recovery results} \label{subsec:MainSteps-SchroEqu2018}
	To make it clearer, we use three lemmas, i.e., Lemmas \ref{lem:sigmaHatRec-SchroEqu2018}, \ref{lem:sigmaHatRecErgo-SchroEqu2018} and \ref{lem:sigmaHatRecSingle-SchroEqu2018}, to illustrate our recovering scheme of the variance function. The first main result is as follows.
	\begin{lem} \label{lem:sigmaHatRec-SchroEqu2018}
		We have the following asymptotic identity,
		\begin{equation} \label{eq:sigmaHatRec-SchroEqu2018}
		4\sqrt{2\pi} \lim_{k \to +\infty} \mathbb{E} \Big( \big[ \overline{u^\infty(\hat x, k, \omega)} - \overline{\mathbb{E} u^\infty(\hat x,k)}\, \big] \cdot \big[  u^\infty(\hat x, k+\tau, \omega) - \mathbb{E} u^\infty(\hat x,k+\tau) \big] \Big) = \widehat{\sigma^2}(\tau \hat x),
		\end{equation}
		where $\tau \geq 0,~ \hat x \in \mathbb{S}^2$.
	\end{lem}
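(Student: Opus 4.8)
The plan is to recognize that the two bracketed quantities in \eqref{eq:sigmaHatRec-SchroEqu2018} are precisely the \emph{centered} far-field patterns $\overline{u_1^\infty(\hat x,k,\omega)}$ and $u_1^\infty(\hat x,k+\tau,\omega)$ introduced in \eqref{eq:u1-SchroEqu2018}. Indeed, $u^\infty(\hat x,k,\omega)-\mathbb E u^\infty(\hat x,k)=u_1^\infty(\hat x,k,\omega)$ by definition, so the quantity under the expectation equals $\overline{u_1^\infty(\hat x,k,\omega)}\,u_1^\infty(\hat x,k+\tau,\omega)$. I would then invoke the Neumann-series representation \eqref{eq:u1InftyDefn-SchroEqu2018}, valid since $k>k^*$, to write $u_1^\infty=-\tfrac{1}{4\pi}(F_0+F_1)$ with $F_0,F_1$ given by \eqref{eq:Fjkx-SchroEqu2018}.

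Substituting this representation and using that the scalar $-\tfrac1{4\pi}$ is real, the product expands into four terms,
\[
\overline{u_1^\infty(k)}\,u_1^\infty(k+\tau)=\frac1{16\pi^2}\Big[\overline{F_0(k)}F_0(k+\tau)+\overline{F_0(k)}F_1(k+\tau)+\overline{F_1(k)}F_0(k+\tau)+\overline{F_1(k)}F_1(k+\tau)\Big],
\]
where I suppress the arguments $\hat x,\omega$. Taking the expectation, the leading term is evaluated exactly by \eqref{eq:I0-SchroEqu2018}, which gives $\mathbb E\big(\overline{F_0(k)}F_0(k+\tau)\big)=(2\pi)^{3/2}\widehat{\sigma^2}(\tau\hat x)$, independent of $k$. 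The three remaining terms are precisely the higher-order crossover terms controlled by Lemma~\ref{lemma:HOT-SchroEqu2018}: the estimate \eqref{eq:hotFjF1-SchroEqu2018} with $j=0$ handles $\mathbb E\big(\overline{F_0(k)}F_1(k+\tau)\big)$, and with $j=1$ handles $\mathbb E\big(\overline{F_1(k)}F_1(k+\tau)\big)$, each being $\mathcal O(k^{-1})$ as $k\to+\infty$.

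The only point requiring care is the term $\mathbb E\big(\overline{F_1(k)}F_0(k+\tau)\big)$, which does not match \eqref{eq:hotFjF1-SchroEqu2018} verbatim. Here I would pass to the complex conjugate: since $\overline{\mathbb E(\overline{F_1(k)}F_0(k+\tau))}=\mathbb E\big(\overline{F_0(k+\tau)}F_1(k)\big)$, this is exactly the $j=0$ instance of \eqref{eq:hotFjF1-SchroEqu2018} with the two wave numbers exchanged, hence also $\mathcal O(k^{-1})$. Collecting the four contributions gives
\[
\mathbb E\big(\overline{u_1^\infty(k)}\,u_1^\infty(k+\tau)\big)=\frac{(2\pi)^{3/2}}{16\pi^2}\widehat{\sigma^2}(\tau\hat x)+\mathcal O(k^{-1}),\qquad k\to+\infty.
\]
Letting $k\to+\infty$ annihilates the remainder, and a direct check of the constants, $4\sqrt{2\pi}\cdot(2\pi)^{3/2}/(16\pi^2)=1$, yields the claimed identity \eqref{eq:sigmaHatRec-SchroEqu2018}. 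I do not anticipate a genuine obstacle here: the analytic content lives entirely in Lemma~\ref{lemma:HOT-SchroEqu2018} and the exact covariance computation \eqref{eq:I0-SchroEqu2018}, so the proof reduces to bookkeeping the expansion together with the single conjugation argument for the $\overline{F_1}F_0$ term.
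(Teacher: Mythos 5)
Your proposal is correct and follows essentially the same route as the paper's own proof: center the far field as $u_1^\infty$, expand via the Neumann series into the four crossover terms $\mathbb{E}\big(\overline{F_j}F_\ell\big)$, evaluate the leading term exactly by \eqref{eq:I0-SchroEqu2018}, and absorb the remaining three into $\mathcal{O}(k^{-1})$ by Lemma~\ref{lemma:HOT-SchroEqu2018}. If anything, you are slightly more careful than the paper, which invokes Lemma~\ref{lemma:HOT-SchroEqu2018} for all three higher-order terms without remarking that $\mathbb{E}\big(\overline{F_1}F_0\big)$ is only covered by \eqref{eq:hotFjF1-SchroEqu2018} after the conjugation and wave-number-swap argument you spell out explicitly.
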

	
	Lemma \ref{lem:sigmaHatRec-SchroEqu2018} clearly yields a recovery formula for the variance function. However, it requires many realizations. The result in Lemma \ref{lem:sigmaHatRec-SchroEqu2018} can be improved by using the ergodicity. See, e.g., \cite{caro2016inverse, Lassas2008,  Helin2018}.
	
	\begin{lem} \label{lem:sigmaHatRecErgo-SchroEqu2018}
		Assume $\{K_j\} \in P(2+\gamma)$. Then  $\exists\, \Omega_0 \subset \Omega \colon \mathbb{P}(\Omega_0) = 0$, $\Omega_0$ depends only on $\{K_j\}_{j \in \mathbb{N}^+}$, such that for any $\omega \in \Omega \backslash \Omega_0$, there exists $S_\omega \subset \R^3 \colon m(S_\omega) = 0$, such that for $\forall x \in \R^3 \backslash S_\omega$, 
		\begin{align} 
		& 4\sqrt{2\pi} \lim_{j \to +\infty} \frac 1 {K_j} \int_{K_j}^{2K_j} \big[ \overline{u^\infty(\hat x,k,\omega)} - \overline{\mathbb{E} u^\infty(\hat x,k)}\, \big] \cdot \big[ u^\infty(\hat x,k+\tau,\omega) - \mathbb{E} u^\infty(\hat x,k+\tau) \big] \dif{k} \nonumber\\
		& = \widehat{\sigma^2} (x), \label{eq:SecondOrderErgo-SchroEqu2018}
		\end{align}
		where $\tau = |x|$ and $\hat x := x / |x|$.
	\end{lem}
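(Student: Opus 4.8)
The plan is to reduce \eqref{eq:SecondOrderErgo-SchroEqu2018} to the ergodic estimates already proved in Lemmas \ref{lemma:LeadingTermErgo-SchroEqu2018} and \ref{lemma:HOTErgo-SchroEqu2018}, and then to upgrade the resulting convergence, which holds for each fixed $x$ almost surely, to the joint statement in which a single null set $\Omega_0$ works for Lebesgue-almost every $x$. First I would use the decomposition \eqref{eq:u1-SchroEqu2018}--\eqref{eq:Fjkx-SchroEqu2018}, namely $u^\infty(\hat x,k,\omega)-\mathbb E u^\infty(\hat x,k)=-\tfrac1{4\pi}\bigl(F_0(k,\hat x,\omega)+F_1(k,\hat x,\omega)\bigr)$, to expand the integrand on the left-hand side of \eqref{eq:SecondOrderErgo-SchroEqu2018}. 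A direct expansion of the product gives
\begin{equation*}
\frac1{K_j}\int_{K_j}^{2K_j}\overline{u_1^\infty(\hat x,k,\omega)}\,u_1^\infty(\hat x,k+\tau,\omega)\dif{k}
=\frac1{16\pi^2}\bigl(X_{0,0}+X_{0,1}+X_{1,0}+X_{1,1}\bigr),
\end{equation*}
where each $X_{p,q}=X_{p,q}(K_j,\tau,\hat x,\omega)$ is precisely one of the averages handled in Lemmas \ref{lemma:LeadingTermErgo-SchroEqu2018} and \ref{lemma:HOTErgo-SchroEqu2018}.

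Next, I would observe that $\{K_j\}\in P(2+\gamma)\subseteq P(4/5+\gamma)$, so both lemmas apply to the same sequence. For each fixed $\tau\geq0$ and $\hat x\in\mathbb S^2$, Lemma \ref{lemma:LeadingTermErgo-SchroEqu2018} yields $X_{0,0}\to(2\pi)^{3/2}\widehat{\sigma^2}(\tau\hat x)$ a.s., while Lemma \ref{lemma:HOTErgo-SchroEqu2018} yields $X_{0,1},X_{1,0},X_{1,1}\to0$ a.s. Thus the average converges almost surely to $\tfrac1{16\pi^2}(2\pi)^{3/2}\widehat{\sigma^2}(\tau\hat x)$, and multiplying by the prefactor and using the identity $4\sqrt{2\pi}\,(2\pi)^{3/2}/(16\pi^2)=4(2\pi)^2/(16\pi^2)=1$ gives exactly $\widehat{\sigma^2}(\tau\hat x)=\widehat{\sigma^2}(x)$, with $\tau=|x|$ and $\hat x=x/|x|$. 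At this stage \eqref{eq:SecondOrderErgo-SchroEqu2018} holds for every fixed $x\in\R^3\backslash\{0\}$ outside a null set of samples that may a priori depend on $x$.

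The hard part is the exchange of quantifiers, since the statement requires one null set $\Omega_0$ depending only on $\{K_j\}$ such that, for $\omega\notin\Omega_0$, the convergence holds for a.e.\ $x$. To achieve this I would introduce the joint exceptional set
\begin{equation*}
A:=\bigl\{(\omega,x)\in\Omega\times\R^3\,;\,\text{the limit on the left of \eqref{eq:SecondOrderErgo-SchroEqu2018} does not equal }\widehat{\sigma^2}(x)\bigr\},
\end{equation*}
which is product-measurable because it is assembled from $\limsup$ and $\liminf$ of jointly measurable integral averages. The previous step shows $\mathbb P(A_x)=0$ for every $x\neq0$, where $A_x$ is the $x$-section, whence $\int_{\R^3}\mathbb P(A_x)\dif{x}=0$. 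Tonelli's theorem then gives
\begin{equation*}
\int_\Omega m\bigl(A^\omega\bigr)\dif{\mathbb P}=\int_{\R^3}\mathbb P(A_x)\dif{x}=0,
\end{equation*}
so that the $\omega$-section $A^\omega$ is Lebesgue-null for $\mathbb P$-almost every $\omega$. Setting $\Omega_0:=\{\omega\,;\,m(A^\omega)>0\}$ produces a null set depending only on $\{K_j\}$, and taking $S_\omega:=A^\omega$ for $\omega\notin\Omega_0$ completes the argument. The only remaining technical point is the joint measurability of the $k$-average in $(\omega,x)$, which I would derive from the measurability of the mild solution in Theorem \ref{thm:MildSolUnique-SchroEqu2018} together with Fubini's theorem applied to the $k$-integration.
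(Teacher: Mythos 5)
Your proposal is correct and follows essentially the same route as the paper's own proof: the same expansion of $\overline{u_1^\infty}\,u_1^\infty$ into the four averages $X_{p,q}$ handled by Lemmas \ref{lemma:LeadingTermErgo-SchroEqu2018} and \ref{lemma:HOTErgo-SchroEqu2018}, followed by the same Fubini--Tonelli argument on the product space $\R^3\times\Omega$ to exchange the quantifiers and extract a single null set $\Omega_0$ with sections $S_\omega$. Your explicit flagging of the joint measurability of the exceptional set is a point the paper passes over silently, but it does not constitute a different approach.
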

	
	\sq{The recovering formula \eqref{eq:SecondOrderErgo-SchroEqu2018} holds for any $\hat x \in \mathbb S^2$ when $x = 0$.}
	The recovery formulae presented in Lemma \ref{lem:sigmaHatRecErgo-SchroEqu2018} still involves every realization of the random sample $\omega$. 
	To recover the variance function by only one realization, the term $\mathbb{E} u^\infty(\hat x,k)$ should be further relaxed in Lemma \ref{lem:sigmaHatRecErgo-SchroEqu2018}, and this is achieved by Lemma \ref{lem:sigmaHatRecSingle-SchroEqu2018}.
	
	\begin{lem} \label{lem:sigmaHatRecSingle-SchroEqu2018}
		Under the same condition as in Lemma \ref{lem:sigmaHatRecErgo-SchroEqu2018}, we have
		\begin{equation} \label{eq:sigmaHatRecSingle-SchroEqu2018}
		4\sqrt{2\pi} \lim_{j \to +\infty} \frac 1 {K_j} \int_{K_j}^{2K_j} \overline{u^\infty(\hat x,k,\omega)} \cdot u^\infty(\hat x,k+\tau,\omega) \dif{k} = \widehat{\sigma^2} (x), \quad \ass.
		\end{equation}
	\end{lem}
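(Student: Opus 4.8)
The plan is to deduce the single-realization identity \eqref{eq:sigmaHatRecSingle-SchroEqu2018} from the many-realization formula \eqref{eq:SecondOrderErgo-SchroEqu2018} of Lemma \ref{lem:sigmaHatRecErgo-SchroEqu2018} by showing that every contribution containing the deterministic mean $\mathbb{E} u^\infty$ washes out in the averaged limit. First I recall from \eqref{eq:u1-SchroEqu2018} that $u_1^\infty(\hat x,k,\omega) = u^\infty(\hat x,k,\omega) - \mathbb{E} u^\infty(\hat x,k)$, so the two bracketed factors appearing in \eqref{eq:SecondOrderErgo-SchroEqu2018} are exactly $\overline{u_1^\infty(\hat x,k,\omega)}$ and $u_1^\infty(\hat x,k+\tau,\omega)$. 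Writing $u^\infty = u_1^\infty + \mathbb{E} u^\infty$ in the integrand of \eqref{eq:sigmaHatRecSingle-SchroEqu2018} and expanding the product yields four terms,
\begin{align*}
& \overline{u^\infty(\hat x,k,\omega)}\, u^\infty(\hat x,k+\tau,\omega) \\
= & \ \overline{u_1^\infty(\hat x,k,\omega)}\, u_1^\infty(\hat x,k+\tau,\omega)
+ \overline{u_1^\infty(\hat x,k,\omega)}\, \mathbb{E} u^\infty(\hat x,k+\tau) \\
& + \overline{\mathbb{E} u^\infty(\hat x,k)}\, u_1^\infty(\hat x,k+\tau,\omega)
+ \overline{\mathbb{E} u^\infty(\hat x,k)}\, \mathbb{E} u^\infty(\hat x,k+\tau).
\end{align*}
After applying $\tfrac 1 {K_j}\int_{K_j}^{2K_j}(\cdot)\dif{k}$ and letting $j\to\infty$, the first term converges almost surely to $\tfrac 1{4\sqrt{2\pi}}\widehat{\sigma^2}(x)$ by Lemma \ref{lem:sigmaHatRecErgo-SchroEqu2018}; it therefore suffices to prove that the averages of the remaining three terms tend to zero a.s.

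The crucial analytic input is Lemma \ref{lemma:FarFieldGoToZero-SchroEqu2018}, which supplies $\sup_{k \geq K_j}|\mathbb{E} u^\infty(\hat x,k)| \to 0$ as $j\to\infty$, uniformly in $\hat x$. For the last, purely deterministic term I would bound its average by $\big(\sup_{k\geq K_j}|\mathbb{E} u^\infty(\hat x,k)|\big)\big(\sup_{k\geq K_j}|\mathbb{E} u^\infty(\hat x,k+\tau)|\big)$, which tends to zero. For each of the two mixed terms I would apply the Cauchy--Schwarz inequality in $k$, e.g.
\[
\Big| \frac 1 {K_j} \int_{K_j}^{2K_j} \overline{u_1^\infty(\hat x,k,\omega)}\, \mathbb{E} u^\infty(\hat x,k+\tau) \dif{k} \Big|
\leq \Big( \sup_{k \geq K_j} |\mathbb{E} u^\infty(\hat x,k+\tau)| \Big) \Big( \frac 1 {K_j} \int_{K_j}^{2K_j} |u_1^\infty(\hat x,k,\omega)|^2 \dif{k} \Big)^{1/2}.
\]
The supremum prefactor vanishes, so it remains only to check that the $L^2$-average $\tfrac 1 {K_j}\int_{K_j}^{2K_j}|u_1^\infty(\hat x,k,\omega)|^2\dif{k}$ stays bounded almost surely.

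This boundedness is precisely the $\tau=0$ (equivalently $x=0$) instance of \eqref{eq:SecondOrderErgo-SchroEqu2018}, which by the observation following Lemma \ref{lem:sigmaHatRecErgo-SchroEqu2018} holds for every $\hat x \in \mathbb{S}^2$. Alternatively, and more transparently for the exceptional-set bookkeeping, I would substitute $u_1^\infty = \tfrac{-1}{4\pi}(F_0 + F_1)$ from \eqref{eq:u1InftyDefn-SchroEqu2018}, so that the average equals $\tfrac 1{16\pi^2}\big(X_{0,0}+X_{0,1}+X_{1,0}+X_{1,1}\big)(K_j,0,\hat x,\omega)$; by Lemma \ref{lemma:LeadingTermErgo-SchroEqu2018} the leading term converges a.s.\ to $(2\pi)^{3/2}\widehat{\sigma^2}(0)$, and by Lemma \ref{lemma:HOTErgo-SchroEqu2018} the three higher-order terms converge a.s.\ to $0$ (the hypothesis $\{K_j\}\in P(2+\gamma)$ dominates the weaker $P(4/5+\gamma)$ these require). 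Hence the $L^2$-average converges, and is a fortiori bounded, a.s., so both mixed terms vanish a.s.\ and \eqref{eq:sigmaHatRecSingle-SchroEqu2018} follows.

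I expect the main obstacle to be the coordination of the almost-sure exceptional null sets rather than any single estimate: the convergence of the principal term, the vanishing of the two mixed terms, and the $L^2$-average bound must all hold off one common $\mathbb{P}$-null set, which I would secure by taking the countable union of the null sets furnished by Lemmas \ref{lemma:LeadingTermErgo-SchroEqu2018}, \ref{lemma:HOTErgo-SchroEqu2018} and \ref{lem:sigmaHatRecErgo-SchroEqu2018}. The reason the scheme works at all is that the decay in Lemma \ref{lemma:FarFieldGoToZero-SchroEqu2018} is uniform in $\hat x$, which renders the cross terms negligible even though $u_1^\infty$ is controlled only in an averaged $L^2$ sense over $[K_j,2K_j]$ and not pointwise in $k$.
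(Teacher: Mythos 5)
Your proposal is correct and follows essentially the same route as the paper: the same four-term decomposition via $u^\infty = u_1^\infty + \mathbb{E}u^\infty$, with the leading term handled by Lemma \ref{lem:sigmaHatRecErgo-SchroEqu2018}, the mixed terms killed by Cauchy--Schwarz together with Lemma \ref{lemma:FarFieldGoToZero-SchroEqu2018} and the $\tau=0$ instance of \eqref{eq:SecondOrderErgo-SchroEqu2018}, and the purely deterministic term bounded by suprema of $|\mathbb{E}u^\infty|$. Your explicit attention to collecting the exceptional null sets into one countable union is a point the paper leaves implicit, but it is not a departure in method.
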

	
	\sq{
	\begin{rem}
		In Lemma \ref{lem:sigmaHatRecSingle-SchroEqu2018}, it should be noted that the left-hand-side of \eqref{eq:sigmaHatRecSingle-SchroEqu2018} contains the random sample $\omega$, while the right-hand-side does not. This means that the limit in \eqref{eq:sigmaHatRecSingle-SchroEqu2018} is statistically stable.
	\end{rem}}

	Now Theorem \ref{thm:Unisigma-SchroEqu2018} becomes a direct consequence of Lemma \ref{lem:sigmaHatRecSingle-SchroEqu2018}. 
	
	\begin{proof}[Proof of Theorem \ref{thm:Unisigma-SchroEqu2018}]
		Lemma \ref{lem:sigmaHatRecSingle-SchroEqu2018} provides a recovery formula for the variance function $\sigma^2$ by the data set $\mathcal M_1$.
	\end{proof}

	\subsection{Proofs of the main results} \label{subsec:ProofsToMainSteps-SchroEqu2018}
	
	In this subsection, we present proofs of Lemmas \ref{lem:sigmaHatRec-SchroEqu2018}, \ref{lem:sigmaHatRecErgo-SchroEqu2018} and \ref{lem:sigmaHatRecSingle-SchroEqu2018}.

	\begin{proof}[Proof of Lemma \ref{lem:sigmaHatRec-SchroEqu2018}]
		\sq{Write $u_1^\infty(\hat x,k,\omega) = u^\infty(\hat x,k,\omega) - \mathbb{E} u^\infty(\hat x,k)$ as in \eqref{eq:u1-SchroEqu2018}. Therefore $4\pi u_1^\infty(\hat x,k,\omega)$ equals to $(-1) \sum_{j=0}^{+\infty} \int_D e^{-ik \hat x \cdot y} (V \Rk)^j (\sigma \dot{B}_y)\dif{y}$. Recall the definition of $F_j(k,\hat x,\omega)$ $(j = 0,1)$ in \eqref{eq:Fjkx-SchroEqu2018}.}
		Let $k_1, k_2 > k > k^*$. One can compute 
		\begin{align}
		16\pi^2  \mathbb{E} \big( \overline{u_1^\infty(\hat x,k_1,\omega)} u_1^\infty(\hat x,k_2,\omega) \big)
		& = \sq{\sum_{j,\ell = 0,1}} \mathbb{E} \big( \overline{F_j(k_1,\hat x,\omega)} F_\ell(k_2,\hat x,\omega) \big) \nonumber\\
		& =: I_0 + I_1 + I_2 + I_3. \label{eq:Thm1uI-SchroEqu2018}
		\end{align}
		From Lemma \ref{lemma:HOT-SchroEqu2018}, we have
		$I_1,\, I_2,\, I_3$ are all of order $k^{-1}$, hence
		\begin{equation} \label{eq:u1Infty-SchroEqu2018}
		16\pi^2  \mathbb{E} \big( \overline{u_1^\infty(\hat x,k_1,\omega)} u_1^\infty(\hat x,k_2,\omega) \big) = I_0 + \mathcal{O}(k^{-1}), \quad k \to +\infty.
		\end{equation}
		By \eqref{eq:I0-SchroEqu2018}, \eqref{eq:Thm1uI-SchroEqu2018} and \eqref{eq:u1Infty-SchroEqu2018}, we have
		$$16\pi^2 \lim_{k \to +\infty} \mathbb{E} \big( \overline{u_1^\infty (\hat x,k_1,\omega)} u_1^\infty (\hat x,k_2,\omega) \big) = (2\pi)^{3/2} \,\widehat{\sigma^2}((k_2 - k_1) \hat x),$$
		which implies (\ref{eq:sigmaHatRec-SchroEqu2018}).
	\end{proof}
	
	\medskip
	
	\begin{proof}[Proof of Lemma \ref{lem:sigmaHatRecErgo-SchroEqu2018}]
		Our proof is divided into two steps. In the first step we give a basic result, i.e., the conclusion  \eqref{eq:SecondOrderErgo2-SchroEqu2018}, and in the second step the logical order between $y$ and $\omega$ in \eqref{eq:SecondOrderErgo2-SchroEqu2018} is exchanged.
		
		\smallskip
		
		\noindent \textbf{Step 1}: give a basic result.
		
		We denote by $\mathcal{E}_k$ the averaging operation w.r.t. $k$: ${{\mathcal{E}_k f} = \frac 1 K \int_K^{2K} f(k) \dif{k}}$. Following the notation conventions in the proof of Lemma \ref{lem:sigmaHatRec-SchroEqu2018}, we have
		\begin{align}
		16\pi^2 \mathcal{E}_k \big( \overline{u_1^\infty(\hat x,k,\omega)} u_1^\infty(\hat x,k+\tau,\omega) \big)
		& = \jz{\sum_{j,\ell = 0,1}} \mathcal{E}_k \big( \overline{F_j(k,\hat x,\omega)} F_\ell(k+\tau,\hat x,\omega) \big) \nonumber\\
		& =: X_{0,0} + X_{0,1}+ X_{1,0} + X_{1,1}. \label{eq:Thm2uX-SchroEqu2018}
		\end{align}
		Recall that $\{K_j\} \in P(2+\gamma)$.~Then, for $\forall \tau \geq 0$ and $\forall \hat x \in \mathbb{S}^2$, Lemma \ref{lemma:LeadingTermErgo-SchroEqu2018} implies that $\exists\, \Omega_{\tau,\hat x}^{0,0} \subset \Omega \colon \mathbb{P}(\Omega_{\tau,\hat x}^{0,0}) = 0$, $\Omega_{\tau,\hat x}^{0,0}$ depending on $\tau$ and $\hat x$, such that
		\begin{equation} \label{eq:Thm2X00-SchroEqu2018}
		\lim_{j \to +\infty} X_{0,0}(K_j,\tau,\hat x,\omega) = (2\pi)^{3/2} \widehat{\sigma^2} (\tau \hat x), \quad \forall \omega \in \Omega \backslash \Omega_{\tau,\hat x}^{0,0}.
		\end{equation}
		$\{K_j\} \in P(2+\gamma)$ implies $\{K_j\} \in P(5/4+\gamma)$, so Lemma \ref{lemma:HOTErgo-SchroEqu2018} implies the existence of the sets $\Omega_{\tau,\hat x}^{p,q} ~\big( (p,q) \in \{ (0,1),\, (1,0),\, (1,1) \} \big)$ with zero probability measures such that $\forall \tau \geq 0$ and $\forall \hat x \in \mathbb{S}^2$,
		\begin{equation} \label{eq:Thm2Xpq-SchroEqu2018}
		\lim_{j \to +\infty} X_{p,q}(K_j,\tau,\hat x,\omega) = 0, \quad \forall \omega \in \Omega \backslash \Omega_{\tau,\hat x}^{p,q}.
		\end{equation}
		for all $(p,q) \in \{ (0,1),\, (1,0),\, (1,1) \}$. Write $\Omega_{\tau,\hat x} = \bigcup_{p,q = 0,1} \Omega_{\tau,\hat x}^{p,q}$\,, then $\mathbb{P} (\Omega_{\tau,\hat x}) = 0$. From Lemmas \ref{lemma:LeadingTermErgo-SchroEqu2018} and \ref{lemma:HOTErgo-SchroEqu2018} we note that $\Omega_{\tau,\hat x}^{p,q}$ also depends on $K_j$, so does $\Omega_{\tau,\hat x}$, but we omit this dependence in the notation. Write
		\[
		Z(\tau\hat x,\omega) := \lim_{j \to +\infty}  \frac {16\pi^2} {K_j} \int_{K_j}^{2K_j} \overline{u_1^\infty(\hat x,k,\omega)} u_1^\infty(\hat x,k+\tau,\omega) \dif{k} - (2\pi)^{3/2} \widehat{\sigma^2} (\tau \hat x)
		\]
		for short. By (\ref{eq:Thm2uX-SchroEqu2018})-(\ref{eq:Thm2Xpq-SchroEqu2018}), we conclude that,
		\begin{equation} \label{eq:SecondOrderErgo2-SchroEqu2018}
		\Forall y \in \R^3, \Exists \Omega_y \subset \Omega \colon \mathbb P (\Omega_y) = 0, \st \forall\, \omega \in \Omega \backslash \Omega_y,\, Z(y,\omega) = 0.
		\end{equation}
		
		\smallskip
		
		\noindent \textbf{Step 2}: exchange the logical order.
		
		To conclude \eqref{eq:SecondOrderErgo-SchroEqu2018} from \eqref{eq:SecondOrderErgo2-SchroEqu2018}, we should exchange the logical order between $y$ and $\omega$. To achieve this, we utilize Fubini's Theorem. Denote the usual Lebesgue measure on $\R^3$ as $\mathbb L$ and the product measure $\mathbb L \times \mathbb P$ as $\mu$, and construct the product measure space $\mathbb M := (\R^3 \times \Omega, \mathcal G, \mu)$ in the canonical way, where $\mathcal G$ is the corresponding complete $\sigma$-algebra. Write
		\[
		\mathcal{A} := \{ (y,\omega) \in \R^3 \times \Omega \,;\, Z(y, \omega) \neq 0 \},
		\]
		then $\mathcal{A}$ is a subset of $\mathbb M$. Set $\chi_\mathcal{A}$ as the characteristic function of $\mathcal{A}$ in $\mathbb M$. By \eqref{eq:SecondOrderErgo2-SchroEqu2018} we obtain
		\begin{equation} \label{eq:FubiniEq0-SchroEqu2018}
		\int_{R^3} \big( \int_\Omega \chi_{\mathcal A}(y,\omega) \dif{\mathbb P(\omega)} \big) \dif{\mathbb L(y)} = 0.
		\end{equation}
		By \eqref{eq:FubiniEq0-SchroEqu2018} and [Corollary 7 in Section 20.1, \citen{royden2000real}] we obtain
		\begin{equation} \label{eq:FubiniEq1-SchroEqu2018}
		\int_{\mathbb M} \chi_{\mathcal A}(y,\omega) \dif{\mathbb \mu} = \int_\Omega \big( \int_{R^3} \chi_{\mathcal A}(y,\omega) \dif{\mathbb L(y)} \big) \dif{\mathbb P(\omega)} = 0.
		\end{equation}
		Because $\chi_{\mathcal A}(y,\omega)$ is non-negative, (\ref{eq:FubiniEq1-SchroEqu2018}) implies
		\begin{equation} \label{eq:FubiniEq2-SchroEqu2018}
		\Exists \Omega_0 \colon \mathbb P (\Omega_0) = 0, \st \forall\, \omega \in \Omega \backslash \Omega_0,\, \int_{R^3} \chi_{\mathcal A}(y,\omega) \dif{\mathbb L(y)} = 0.
		\end{equation}
		Formula (\ref{eq:FubiniEq2-SchroEqu2018}) further implies for every $\omega \in \Omega \backslash \Omega_0$,
		\begin{equation} \label{eq:FubiniEq3-SchroEqu2018}
		\Exists S_\omega \subset \R^3 \colon \mathbb L (S_\omega) = 0, \st \forall\, y \in \R^3 \backslash S_\omega,\, Z(y,\omega) = 0.
		\end{equation}
		From (\ref{eq:FubiniEq3-SchroEqu2018}) we arrive at (\ref{eq:SecondOrderErgo-SchroEqu2018}).
	\end{proof}
	
	\medskip
	
	\begin{proof}[Proof of Lemma \ref{lem:sigmaHatRecSingle-SchroEqu2018}]
		The symbol $\mathcal{E}_k$ is defined the same as in the proof of Lemma \ref{lem:sigmaHatRecErgo-SchroEqu2018}. We have
		\begin{align}
		& 16\pi^2 \mathcal{E}_k \big( \overline{u^\infty(\hat x,k,\omega)} u^\infty(\hat x,k+\tau,\omega) \big) \nonumber\\
		= \ & 16\pi^2 \mathcal{E}_k \big( \overline{u_1^\infty(\hat x,k,\omega)} \cdot u_1^\infty(\hat x,k,\omega) \big) + 16\pi^2 \mathcal{E}_k \big( \overline{u_1^\infty(\hat x,k,\omega)} \cdot \mathbb E u^\infty(\hat x,k+\tau) \big) \nonumber\\
		& + 16\pi^2 \mathcal{E}_k \big( \overline{\mathbb E u^\infty(\hat x,k)} \cdot u_1^\infty(\hat x,k+\tau,\omega) \big) + 16\pi^2 \mathcal{E}_k \big( \overline{\mathbb E u^\infty(\hat x,k)} \cdot \mathbb E u^\infty(\hat x,k+\tau) \big) \nonumber\\
		=: & J_0 + J_1 + J_2 + J_3. \label{eq:J-SchroEqu2018}
		\end{align}
		
		From Lemma \ref{lem:sigmaHatRecErgo-SchroEqu2018} we obtain
		\begin{equation} \label{eq:J0-SchroEqu2018}
		\begin{split}
		& \lim_{j \to +\infty} J_0  = \lim_{j \to +\infty} \jz{\frac {16\pi^2} {K_j}} \int_{K_j}^{2K_j} \overline{u_1^\infty(\hat x,k,\omega)} \cdot u_1^\infty(\hat x,k+\tau,\omega) \dif{k} = (2\pi)^{3/2} \widehat{\sigma^2} (\tau \hat x), \\
		& \quad \tau \hat x \aee \! \in \R^3, \quad \omega \ass \! \in \Omega.
		\end{split}
		\end{equation}
		
		We now estimate $J_1$,
		\begin{align}
		|J_1|^2
		& \simeq \big| \mathcal{E}_k \big( \overline{u_1^\infty(\hat x,k,\omega)} \cdot \mathbb E u^\infty(\hat x,k+\tau) \big) \big|^2
		= \big| \frac 1 {K_j} \int_{K_j}^{2K_j} \overline{u_1^\infty(\hat x,k,\omega)} \cdot \mathbb E u^\infty(\hat x,k+\tau) \dif{k} \big|^2 \nonumber\\
		& \leq \frac 1 {K_j} \int_{K_j}^{2K_j} |u^\infty(\hat x,k,\omega) - \mathbb{E} u^\infty(\hat x,k)|^2 \dif{k} \cdot \frac 1 {K_j} \int_{K_j}^{2K_j} |\mathbb E u^\infty(\hat x,k+\tau)|^2 \dif{k}. \label{eq:J1One-SchroEqu2018}
		\end{align}
		Combining \eqref{eq:J1One-SchroEqu2018} with Lemmas \ref{lemma:FarFieldGoToZero-SchroEqu2018} and \ref{lem:sigmaHatRecErgo-SchroEqu2018}, we have
		\begin{equation} \label{eq:J1-SchroEqu2018}
		|J_1|^2 \lesssim (\widehat{\sigma^2}(0) + o(1)) \cdot o(1) = o(1) \to 0, \quad j \to +\infty.
		\end{equation}
		The analysis of $J_2$ is similar to that of $J_1$ so we skip the details.

		\smallskip

		Finally, by Lemma \ref{lemma:FarFieldGoToZero-SchroEqu2018}, the $J_3$ can be estimated as
		\begin{align}
		|J_3|^2
		& \simeq \big| \mathcal{E}_k \big( \overline{\mathbb E u^\infty(\hat x,k)} \cdot \mathbb E u^\infty(\hat x,k+\tau) \big) \big|^2 \nonumber\\
		& \leq \frac 1 {K_j} \int_{K_j}^{2K_j} \sup_{\kappa \geq K_j} \big| \mathbb E u^\infty(\hat x,\kappa) \big|^2 \dif{k} \cdot \frac 1 {K_j} \int_{K_j}^{2K_j} \sup_{\kappa \geq K_j+\tau} \big| \mathbb E u^\infty(\hat x,\kappa) \big|^2 \dif{k} \nonumber\\
		& = \sup_{\kappa \geq K_j} |\mathbb E u^\infty(\hat x,\kappa)|^2 \cdot \sup_{\kappa \geq K_j+\tau} |\mathbb E u^\infty(\hat x,\kappa)|^2 \to 0, \quad j \to +\infty. \label{eq:J3-SchroEqu2018}
		\end{align}
		Combining \eqref{eq:J-SchroEqu2018}, \eqref{eq:J0-SchroEqu2018}, \eqref{eq:J1-SchroEqu2018} and \eqref{eq:J3-SchroEqu2018}, we arrive at \eqref{eq:sigmaHatRecSingle-SchroEqu2018}. Our proof is done.
	\end{proof}

	\section{Uniqueness of the potential and the random source} \label{sec:RecPS-SchroEqu2018}
	
	In this section, we focus on the recovery of the potential term and the expectation of the random source. Due to the highly nonlinear relation between the total wave and the potential, the active scattering measurements are thus utilized to recover the potential. 
	In the recovery of the potential, the random sample $\omega$ is set to be fixed so that a single realization of the random term $\dot B_x$ is enough to obtain the unique recovery. 
	Different from the recovery of the potential, the uniqueness of the expectation requires all realizations of the random sample $\omega$. 
	\jz{Because the deterministic and random parts of the source are entangled together so that only one realization of the random source cannot reveal exact values of the expectation at each spatial point $x$.} 
	
	\subsection{Recovery of the potential}
	
	Now we are in the position to prove Theorem \ref{thm:UniPot1-SchroEqu2018}. We are to use the incident plane wave, so $\alpha$ is set to be 1 throughout this section.
	
	\begin{proof}[Proof of Theorem \ref{thm:UniPot1-SchroEqu2018}]
		The random sample $\omega$ is assumed to be fixed. Given two direction $d_1$ and $d_2$ of the incident plane waves, we denote the corresponding total wave as $u_{d_1}$ and $u_{d_2}$, respectively. Then, from \eqref{eq:1}, we have
		\begin{equation} \label{eq:uSubtract-SchroEqu2018}
		\begin{cases}
		(-\Delta - k^2)(u_{d_1} - u_{d_2}) = V (u_{d_1} - u_{d_2}) & \\
		u_{d_1} - u_{d_2} = e^{ikd_1 \cdot x} - e^{ikd_2 \cdot x} + u_{d_1}^{sc}(x) - u_{d_2}^{sc}(x) & \\
		u_{d_1}^{sc}(x) - u_{d_2}^{sc}(x): \text{ SRC} &
		\end{cases}
		\end{equation}
		From \eqref{eq:uSubtract-SchroEqu2018} we have the Lippmann-Schwinger equation,
		\begin{equation} \label{eq:uLippSchw-SchroEqu2018}
		\big( I - \Rk V \big) (u_{d_1} - u_{d_2}) = e^{ikd_1 \cdot x} - e^{ikd_2 \cdot x}.
		\end{equation}
		When $k > k^*$, equality \eqref{eq:uLippSchw-SchroEqu2018} gives
		\[
		u_{d_1}^{sc} - u_{d_2}^{sc} 
		= \Rk V (e^{ikd_1 \cdot x} - e^{ikd_2 \cdot x}) + \sum_{j=2}^\infty (\Rk V)^j (e^{ikd_1 \cdot x} - e^{ikd_2 \cdot x}).
		\]
		Therefore the difference between the far-field patterns is
		\begin{align}
		& u^{\infty}(\hat{x},k,d_1) - u^{\infty}(\hat{x},k,d_2) \nonumber\\
		= \ & \int_{D} \frac{e^{-ik\hat{x} \cdot y}}{4\pi} V(y) (e^{ikd_1 \cdot y} - e^{ikd_2 \cdot y}) \dif{y} + \sum_{j=1}^\infty \int_{D} \frac{e^{-ik\hat{x} \cdot y}}{4\pi} V(y) (\Rk V)^j (e^{ikd_1 \cdot (\cdot)} - e^{ikd_2 \cdot (\cdot)}) \dif{y} \nonumber\\
		=: & \sqrt{\frac{\pi}{2}} \widehat{V} \big( k(\hat{x} - d_1) \big) - \sqrt{\frac{\pi}{2}} \widehat{V} \big( k(\hat{x} - d_2) \big) + \sum_{j=1}^\infty H_j(k), \label{eq:uFarfied-SchroEqu2018}
		\end{align}
		where
		\begin{equation} \label{eq:Fjk-SchroEqu2018}
		H_j(k) := \int_{D} \frac{e^{-ik\hat{x} \cdot y}}{4\pi} V(y) (\Rk V)^j (e^{ikd_1 \cdot (\cdot)} - e^{ikd_2 \cdot (\cdot)}) \dif{y}, \quad j = 1,2, \cdots.
		\end{equation}
		For any $p \in \R^3$, when $p = 0$, we let $\hat x = (1,0,0),$ $d_1 = (1,0,0),$ $d_2 = (0,1,0)$; when $p \neq 0$, we can always find a $p^\perp \in \R^3$ which is perpendicular to $p$. Let
		\begin{equation*} 
		e = p^\perp / \nrm{p^\perp}
		\quad\text{ and }\quad
		\left\{\begin{aligned}
		\hat x & = \sqrt{1 - \nrm{p}^2 / (4k^2)} \cdot e + p / (2k), \\
		d_1    & = \sqrt{1 - \nrm{p}^2 / (4k^2)} \cdot e - p / (2k), \\
		d_2    & = p/\nrm{p},
		\end{aligned}\right.
		\end{equation*}
		when $k > \nrm{p}/2$, we have
		\begin{equation} \label{eq:xhatd1Property-SchroEqu2018}
		\left\{\begin{aligned}
		& \hat x, d_1, d_2 \in \mathbb{S}^2, \\
		& k(\hat{x} - d_1) = p, \\
		& |k(\hat{x} - d_2)| \to \infty ~(k \to \infty).
		\end{aligned}\right.
		\end{equation}
		Note that the choices of these two unit vectors $\hat x$, $d_1$ depend on $k$. For different values of $k$, we pick up different directions $\hat x$, $d_1$ to guarantee \eqref{eq:xhatd1Property-SchroEqu2018}. Then,
		\begin{equation} \label{eq:Vxd1d2-SchroEqu2018}
		\sqrt{\frac \pi 2}\widehat{V}(p) = \lim_{k \to +\infty} \big( \sqrt{\frac \pi 2} \widehat{V} (k(\hat{x} - d_1)) - \sqrt{\frac{\pi}{2}} \widehat{V} (k(\hat{x} - d_2)) \big).
		\end{equation}
		Combining \eqref{eq:uFarfied-SchroEqu2018}, \eqref{eq:Vxd1d2-SchroEqu2018} and Lemma \ref{lemma:FjkEstimated-SchroEqu2018}, we conclude
		\begin{equation} \label{eq:PotnFourier-SchroEqu2018}
		\widehat{V}(p) = \sqrt{\frac{2}{\pi}} \lim_{k \to +\infty} \big( u^{\infty}(\hat{x},k,d_1) - u^{\infty}(\hat{x},k,d_2) \big).
		\end{equation}
		Formula \eqref{eq:PotnFourier-SchroEqu2018} completes the proof.
	\end{proof}
	
	It remains to give the estimates of these high-order terms $H_j(k)$, and this is done by Lemma \ref{lemma:FjkEstimated-SchroEqu2018}.
	\begin{lem} \label{lemma:FjkEstimated-SchroEqu2018}
		The sum of high-order terms $H_j(k)$ defined in \eqref{eq:Fjk-SchroEqu2018} satisfies the following estimate,
		$$\big| \sum_{j \geq 1} H_j(k) \big| \leq C k^{-1},$$
		for some constant $C$ independent of $k$.
	\end{lem}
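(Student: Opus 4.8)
The plan is to treat $\sum_{j\geq 1}H_j(k)$ as the tail of a Neumann series and to bound each summand by a geometric term with ratio $\mathcal{O}(k^{-1})$, so that summation yields the claimed $\mathcal{O}(k^{-1})$ bound. The key device is the algebraic identity $V(\Rk V)^j = (V\Rk)^j V$, which regroups the alternating operator string in \eqref{eq:Fjk-SchroEqu2018} so that all the decay factors sit on copies of $V\Rk$, to which Lemma \ref{lemma:RkVBounded-SchroEqu2018} applies directly. Writing $\varphi(\cdot) := e^{ikd_1 \cdot (\cdot)} - e^{ikd_2 \cdot (\cdot)}$, the integrand of $H_j(k)$ becomes $\tfrac{1}{4\pi}e^{-ik\hat x\cdot y}\,(V\Rk)^j(V\varphi)(y)$.

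First I would estimate $H_j(k)$ by the Cauchy--Schwarz inequality over the bounded domain $D$, using $|e^{-ik\hat x\cdot y}| = 1$, to get
\begin{equation*}
|H_j(k)| \leq \frac{|D|^{1/2}}{4\pi}\,\nrm[L^2(D)]{(V\Rk)^j (V\varphi)}.
\end{equation*}
Applying the operator bound $\nrm[\mathcal{L}(L^2(D),L^2(D))]{V\Rk}\leq C_{D,V}k^{-1}$ from Lemma \ref{lemma:RkVBounded-SchroEqu2018} exactly $j$ times, together with $\nrm[L^2(D)]{V\varphi}\leq \nrm[L^\infty(D)]{V}\,\nrm[L^2(D)]{\varphi}\leq 2|D|^{1/2}\nrm[L^\infty(D)]{V}$ (each plane wave being of unit modulus), I would arrive at
\begin{equation*}
|H_j(k)| \leq \frac{|D|\,\nrm[L^\infty(D)]{V}}{2\pi}\,\big(C_{D,V}k^{-1}\big)^{j}.
\end{equation*}

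Finally I would sum the geometric series. Since $k > k^*$, the ratio $C_{D,V}k^{-1}$ is strictly below $1$ (indeed $\leq 1/2$ once $k$ is large enough), so
\begin{equation*}
\Big|\sum_{j\geq 1}H_j(k)\Big| \leq \frac{|D|\,\nrm[L^\infty(D)]{V}}{2\pi}\,\frac{C_{D,V}k^{-1}}{1-C_{D,V}k^{-1}} \leq C k^{-1},
\end{equation*}
with $C$ independent of $k$, which is the assertion. I do not anticipate a genuine obstacle: this is a routine Neumann-tail estimate, and the only point deserving care is uniformity in the directions $\hat x, d_1, d_2$. That uniformity is automatic, however, because these vectors enter every bound solely through unimodular exponentials while the constant $C_{D,V}$ depends only on $D$ and $V$; in particular the $k$-dependent choice of $\hat x$ and $d_1$ made in the proof of Theorem \ref{thm:UniPot1-SchroEqu2018} causes no difficulty.
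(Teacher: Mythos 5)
Your proposal is correct and takes essentially the same route as the paper: both arguments bound $|H_j(k)|$ by Cauchy--Schwarz over $D$ together with $j$ applications of the operator bound in Lemma \ref{lemma:RkVBounded-SchroEqu2018}, and then sum the resulting geometric series with ratio $\mathcal{O}(k^{-1})$. The only cosmetic differences are that you regroup $V(\Rk V)^j = (V\Rk)^j V$ so as to invoke \eqref{eq:VRkbdd-SchroEqu2018} rather than \eqref{eq:RkVbdd-SchroEqu2018}, and keep the difference of plane waves as a single function $\varphi$ instead of splitting it; if anything, your version tracks the constant $(C_{D,V}k^{-1})^j$ through the geometric sum slightly more carefully than the paper does.
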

	\begin{proof}[Proof of Lemma \ref{lemma:FjkEstimated-SchroEqu2018}]
		According to Lemma \ref{lemma:RkVBounded-SchroEqu2018}, we have
		\begin{align*}
		|H_j(k)|
		& \lesssim \int_D |V(y)| \cdot \big| [(\Rk V)^{j} e^{ik d_1 \cdot (\cdot)}] (y) \big| \dif{y} + \int_D |V(y)| \cdot \big| [(\Rk V)^{j} e^{ik d_2 \cdot (\cdot)}] (y) \big| \dif{y} \\
		& \lesssim \nrm[L^\infty]{V} \cdot |D|^{1/2} \cdot \big( k^{-j} \nrm[L^2(D)]{ e^{ik d_1 \cdot (\cdot)} } + k^{-j} \nrm[L^2(D)]{ e^{ik d_2 \cdot (\cdot)} } \big) \\
		& = 2\nrm[L^\infty]{V} \cdot |D| \cdot k^{-j}.
		\end{align*}
		Therefore,
		$$|\sum_{j=1}^{\infty} H_j(k)|
		\leq \sum_{j=1}^{\infty} |H_j(k)|
		\leq 2C \nrm[L^\infty]{V} \cdot |D| \cdot \sum_{j=1}^{\infty} k^{-j} \leq C k^{-1}, \quad k \to +\infty.$$
		The proof is done.
	\end{proof}

	\subsection{Recovery of the random source}
	
	The variance function of the random source is recovered in Section \ref{sec:RecVar-SchroEqu2018}, and now we recover its expectation.
	
	\begin{proof}[Proof to Theorem \ref{thm:UniSou1-SchroEqu2018}]
		According to Theorem \ref{thm:UniPot1-SchroEqu2018}, we have the uniqueness of the potential. Assume that two source $f$, $f'$ generate same far-field patterns for all $k > 0$. We denote the restriction on $D$ of the corresponding total waves as $u$ and $u'$. Then,
		\begin{equation} \label{eq:uuprime-SchroEqu2018}
		\left\{\begin{aligned}
		(\Delta + k^2 + V) (\mathbb{E} u - \mathbb{E} u') & = f - f' && \text{ in } D \\
		\mathbb{E} u - \mathbb{E} u' = \partial_\nu (\mathbb{E} u) - \partial_\nu (\mathbb{E} u') & = 0 && \text{ on } \partial D
		\end{aligned}\right.
		\end{equation}
		where $\nu$ is the outer normal to $\partial D$. Let test functions $v_k \in H_0^1(D)$ be the weak solutions of the boundary value problem
		\begin{equation} \label{eq:DiriLap-SchroEqu2018}
		\left\{\begin{aligned}
		(-\Delta - V)v_k & = k^2 v_k && \text{ in } D \\
		v_k & = 0 && \text{ on } \partial D
		\end{aligned}\right.
		\end{equation}
		for delicately picked $k$. The solutions $v_k$ are eigenvectors of the system \eqref{eq:DiriLap-SchroEqu2018}. From \eqref{eq:uuprime-SchroEqu2018} we have
		\begin{equation} \label{eq:uuprimev-SchroEqu2018}
		\int_D (\Delta + V  + k^2) (\mathbb{E}u - \mathbb{E}u') \cdot v_k \dif{x} = \int_D (f - f')v_k \dif{x}.
		\end{equation}
		Using integral by parts and noting that the $v_k$'s in \eqref{eq:uuprimev-SchroEqu2018} satisfy \eqref{eq:DiriLap-SchroEqu2018}, we have
		\begin{equation} \label{eq:ffprime-SchroEqu2018}
		\int_D (f - f')v_k \dif{x} = 0.
		\end{equation}
		When $\nrm[L^\infty(D)]{V}$ is less than some constant depending on $D$, the set of eigenvectors $\{v_k\}$ corresponding to different eigenvalues $k^2$ forms an orthonormal basis of $L^2(D)$ [Theorem 2.37, \citen{mclean2000strongly}]. Therefore, from \eqref{eq:ffprime-SchroEqu2018} we conclude that
		$$f = f' \text{ in } L^2(D).$$
		The proof is done.
	\end{proof}

	\section{Conclusions} \label{sec:Conclusions-SchroEqu2018}
	
	In this paper, we are concerned with a random Schr\"odinger equation. 
	First, the well-posedness of the direct problem is studied. 
	Then, the variance function of the random source is recovered by using a single passive scattering measurement. 
	By further utilizing active scattering measurements under a single realization of the random sample, the potential is recovered. 
	Finally, with the help of multiple realizations of the random sample, the expectation of the random source are recovered. 
	The major novelty of our study is that on the one hand, both the random source and the potential are unknown, and on the other hand, both passive and active measurements are used to recover all of the unknowns. 
	
	\sq{While the direct problem in this paper is well-formulated in the space $L_{-1/2-\epsilon}^2$, the regularity of the solution of the random Schr\"odinger system is not taken into consideration. 
	A different formulation of the direct problem, which takes the regularity of the solution into consideration, is possible. 
	And this new formulation gives possibility to handle the case where both the source and potential are random. 
	We shall report our finding in this aspect in a forthcoming article.
	}
	
	\sq{
		\section*{Acknowledgements}

		We are grateful to the two anonymous referees and the handling editor for many constructive comments and suggestions, which have led to significant improvements on the results and presentation of the paper. The work of J.~Li was partially supported by the NSF of China under the grant No.~11571161 and 11731006, the Shenzhen Sci-Tech Fund No.~JCYJ20170818153840322. The work of H.~Liu was partially supported by HKBU FRG Funds and Hong Kong RGC Grants, No. 12302017 and 12301218}.


\end{document}